\begin{document}

\title[Bubble tower solutions]
{Sign-changing bubble tower solutions for a Paneitz-type problem}

\author[W. Chen]{Wenjing Chen}
\address{\noindent W. Chen-School of Mathematics and Statistics, Southwest University,
Chongqing 400715, People's Republic of China}\email{wjchen@swu.edu.cn}

\author[X. Huang]{Xiaomeng Huang}
\address{\noindent X. Huang-School of Mathematics and Statistics, Southwest University,
Chongqing 400715, People's Republic of China.}\email{hhuangxiaomeng@126.com}

 \maketitle

\maketitle
\numberwithin{equation}{section}
\newtheorem{theorem}{Theorem}[section]
\newtheorem{lemma}[theorem]{Lemma}
\newtheorem{definition}[theorem]{Definition}
\newtheorem{proposition}[theorem]{Proposition}
\newtheorem{remark}[theorem]{Remark}
\allowdisplaybreaks

\maketitle

\noindent {\bf Abstract}: This paper is concerned with the following biharmonic problem
\begin{equation}\label{ineq}
\begin{cases}
\Delta^2 u=|u|^{\frac{8}{N-4}}u &\text{ in }  \ \Omega\backslash \overline{{B(\xi_0,\varepsilon)}},\\
  u=\Delta u=0 &\text{ on } \ \partial (\Omega \backslash \overline{{B(\xi_0,\varepsilon)}}),
\end{cases}
\end{equation}
where $\Omega$ is an open bounded domain in $\mathbb{R}^N$, $N\geq 5$, and $B(\xi_0,\varepsilon)$ is a ball centered at $\xi_0$ with radius $\varepsilon$, $\varepsilon$ is a small positive parameter.
We obtain the existence of solutions for problem (\ref{ineq}), which is an arbitrary large number of sign-changing solutions whose profile is a superposition of bubbles with alternate sign which concentrate at the center of the hole.

\vspace{3mm} \noindent {\bf Keywords}: Biharmonic equation; critical Sobolev exponent; sign-changing bubble tower solutions; reduction method.

\vspace{3mm}
\vspace{3mm}

\maketitle

\section{Introduction and statement of main result}

In this article, we consider the existence of sign-changing bubble tower solutions to the following problem
\begin{equation}\label{e1.1}
\begin{cases}
\Delta^2 u=|u|^{p-1}u &\text{ in }  \ \Omega_\varepsilon,\\
  u=\Delta u=0 &\text{ on } \ \partial \Omega_\varepsilon,
\end{cases}
\end{equation}
where $\Delta^2$ is the biharmonic operator, $\Omega_\varepsilon:= \Omega \backslash \overline{{B(\xi_0,\varepsilon)}}$ with $\Omega$ being an open bounded domain in $\mathbb{R}^N$, $N\geq 5$, $B(\xi_0,\varepsilon)$ is a ball centered at $\xi_0$ with radius $\varepsilon$, $\varepsilon>0$ small enough, and $p=\frac{N+4}{N-4}$ is the critical exponent in the sense that the embedding $H^2(\Omega)\cap H_0^1(\Omega)\hookrightarrow L^{p+1}(\Omega)$.

Problem (\ref{e1.1}) is related to the Paneitz operator, which is conformal operator of the fourth-order. It was first introduced by Paneitz \cite{Paneitz} for the study of smooth four-dimensional Riemannian manifolds, and Branson \cite{Branson1} generalized the dimension of the Riemannian manifold to $N$ dimension. Since this type of equation involving Paneitz operators is similar to geometric equations with Paneitz operators, it has received a lot of attention, we refer to \cite{Ala,Branson2,Ayed2,Chang1, Chang2, Chang3, Ebobisse, Mehdi, Gazzola} and references therein.

When the biharmonic operator in (\ref{e1.1}) is replaced by the Laplacian operator, consider the following problem
\begin{equation}\label{e1.2}
\begin{cases}
-\Delta u=|u|^{\frac{4}{N-2}}u &\text{ in }  \ \Omega,\\
  u=0 &\text{ on } \ \partial \Omega.
\end{cases}
\end{equation}
Solvability for problem \eqref{e1.2} is not a trivial issue, since it strongly depends on the geometry of $\Omega$.
A direct consequence of Pohozaev's identity \cite{ph} is that problem (\ref{e1.2}) has no positive solutions when the domain $\Omega$ is strictly star-shaped.
On the other hand, if $\Omega$ is an annulus, then Kazdan and Warner \cite{kw} showed that solvability for problem (\ref{e1.2}) is restored. Coron showed that symmetry is not really needed to have solvability in \cite{coron}, he obtained the existence of a positive solution to (\ref{e1.2}) in the case in which $\Omega$ has a small (not necessarily symmetric) hole.

The study of sign-changing solutions for elliptic problems with critical nonlinearity has received the interest of several authors in the last years, see for instance \cite{bar1,clm,vaira,vaira2} and references therein.
Here we focus our interest in existence and qualitative properties of sign-changing solutions to \eqref{e1.2} for domains $D$ which have a hole, that is in the Coron's setting.
The first result available in literature is the one contained in \cite{mupi2}, where a large number of sign changing solutions to  (\ref{e1.2}) in the presence of a single small hole has been proved. To be more precise, the authors assume that the domain is $\Omega\backslash B(0,\varepsilon)$, where $\Omega$ is a smooth bounded domain containing the origin, and it is symmetric with respect to the origin, while the hole is given by $B(0,\varepsilon)$, a round ball with radius $\varepsilon$.
Substantial improvement of this result was obtained in \cite{Ge} where the assumption of symmetry was removed,
see also \cite{mupi2,Musso}. Recently, the existence of a sequence of finite-energy, sign-changing solutions with a crown-like shape for problem (\ref{e1.2}) was obtained in \cite{demu}.

Consider the following biharmonic equation under the Navier boundary condition
\begin{align}\label{ek}
\left\{
  \begin{array}{ll}
\Delta^2 u=K|u|^{q-1}u &\text{ in }  \ \Omega,\\
  u=\Delta u=0 &\text{ on } \ \partial \Omega.
\end{array}
\right.
\end{align}
In the subcritical case, namely $q=\frac{N+4}{N-4}-\varepsilon$ with $\varepsilon>0$ small, when $K$ is a constant, the asymptotic behavior of solutions of (\ref{ek}) has been studied in \cite{Ayed3}. On the other hand,
Ayed and Ghoudi \cite{Ayed1} proved that the low energy sign-changing solutions to (\ref{ek}) that are close to two bubbles with different signs and they have to blow up either at two different points with the same speed or at a critical point of the Robin function. Yessine and Rabeh \cite{Yessine} constructed a solution with the shape of a tower of sign-changing bubbles as $\varepsilon$ goes to zero. When $K\neq 1$, Ghoudi\cite{Ghoudi} constructed sign-changing solutions of (\ref{ek}) having two bubbles and blowing up either at two different critical points of $K$ with the same speed or at the same critical point. See \cite{Chou, Mehdi} and the reference therein for the concentration phenomena of solutions of the subcritical problem (\ref{ek}).

Concerning the supercritical case, namely $q=\frac{N+4}{N-4}+\varepsilon$ with $\varepsilon>0$, Bouh\cite{Bouh} showed that there is no sign-changing solution with low energy which blow up at exactly two points for $\varepsilon$ small and proved that problem (\ref{e1.1}) has no bubble-tower sign-changing solutions. Ayed {\em et al.} \cite{Ayed3} got that the supercritical problem (\ref{e1.1}) has no solutions which concentrate around a point of $\Omega$ as $\varepsilon \rightarrow 0$.  The case $K$ is a nonconstant function, it was proved \cite{Bouh1} that for $\varepsilon$ small, (\ref{ek}) has no sign-changing solutions that blow up at two near points and also has no bubble-tower sign-changing solutions.

In the critical case $q=\frac{N+4}{N-4}$. Since the Sobolev embedding is not compact,  problem (\ref{e1.1}) is the lack of compactness. In fact, some researchers have got some results of the existence of solution. A first result of problem (\ref{e1.1}) was obtained by Van Der Vorst \cite{rcam}, who showed that when $\Omega_\varepsilon$ is a starshaped domain, (\ref{e1.1}) has no positive solutions. In \cite{Ebobisse}, Ebobisse and Ould Ahmedou investigated the influence of the topology of the domain $\Omega_\varepsilon$ on the existence of solution, they proved that (\ref{e1.1}) has a positive solution when some homology group of $\Omega_\varepsilon$ is nontrivial. Subsequently, in \cite{Gazzola}, Gazzola, Grunau and Squassina showed that this topological condition is sufficient, but not necessary by proving existence of nontrivial solutions in some contractible domains which are perturbations of small capacity of domains having nontrivial topology. When the domain $\Omega_\varepsilon$ is a bounded domain with a small ball removed, Alarc\'{o}n and Pistoia\cite{Ala}  constructed solutions of (\ref{e1.1}) blowing up at the center of the hole as the size of the hole goes to zero.

Moreover, as far as we know, there are few results on the sign-changing tower solutions for problem  (\ref{e1.1}).
Inspired by the above works, especially by \cite{Ala,Ge,mupi,Musso}, in the present paper, we construct sign-changing solutions  to problem (\ref{e1.1}), the shape of this solution is a superposition of bubbles with alternating sign centered at the center of the hole, the point $\xi_0$, as $\varepsilon$ goes to $0$.

Let us consider the following limit equation
\begin{equation}\label{e1.3}
\Delta^2 u=u^{\frac{N+4}{N-4}},\ \ \ u>0,\ \  \text{ in }  \mathbb{R}^N.
\end{equation}
Smooth radial solutions of \eqref{e1.3} are completely classified \cite{Lin} and are given by
\begin{equation}\label{limjie}
U_{\mu,\xi}(x)=\alpha_N\left(\frac{\mu}{\mu^2+|x-\xi|^2}\right)^{\frac{N-4}{2}},\ x\in \mathbb{R}^N,
\end{equation}
where $\alpha_N=(N(N-4)(N-2)(N+2))^{\frac{N-4}{8}}$, $\mu$ is a positive parameter and $\xi \in \mathbb{R}^N$.

Our main result can be stated as follows.

\begin{theorem}\label{thm1.1}
Assume $N\geq 5$. Then, given an integer $k\geq 1$, there exists $\varepsilon_0>0$ such that for any $\varepsilon \in (0,\varepsilon _0)$, there exists a pair of solutions $u_\varepsilon$ and $-u_\varepsilon$ to problem (\ref{e1.1}) such that
\begin{equation*}
u_\varepsilon(x)=\alpha_N\sum_{i=1}^k(-1)^{i+1}\left(\frac{d_i\varepsilon^{\frac{2i-1}{2k}}}{d_i^2\varepsilon^{2\frac{2i-1}{2k}}+|x-\xi_0|^2}\right)^{\frac{N-4}{2}}(1+o(1)),
\end{equation*}
where $d_1,\ldots,d_k$ are positive numbers depending only on $N$ and $k$ and $o(1)\rightarrow 0$ uniformly on compact subsets of $\Omega$ as $\varepsilon$ goes to $0$.
\end{theorem}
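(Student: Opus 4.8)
The plan is to use a Lyapunov--Schmidt finite-dimensional reduction, following the scheme that is by now standard for Coron-type problems (see \cite{Ala,Ge,mupi,Musso}), but adapted to the biharmonic operator with Navier boundary conditions. The first step is to set up the functional-analytic framework: we rewrite problem (\ref{e1.1}) as $u = \left(\Delta^2\right)^{-1}\!\left(|u|^{p-1}u\right)$ in $H := H^2(\Omega_\varepsilon)\cap H_0^1(\Omega_\varepsilon)$, equipped with the norm induced by the operator $\Delta^2$ under Navier boundary conditions, and we exploit that the Green's function of $\Delta^2$ on $\Omega_\varepsilon$ can be written as the Green's function on $\Omega$ minus a suitable regular part; the removed small ball is the source of the extra topology that makes the construction possible. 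Because the domain depends on $\varepsilon$ and the concentration occurs at $\xi_0$, it is convenient (as in \cite{Ala}) to perform a Kelvin-type transform / rescaling centered at $\xi_0$ so that the hole becomes the exterior of a unit ball and the problem is posed on a fixed-geometry domain, with the $k$ bubbles then arranged at comparable scales.

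The second step is the construction of the approximate solution. Define the concentration parameters $\mu_i = d_i\,\varepsilon^{\frac{2i-1}{2k}}$ for $i=1,\dots,k$, so that $\mu_1 \gg \mu_2 \gg \cdots \gg \mu_k$ with the ratios $\mu_{i+1}/\mu_i \asymp \varepsilon^{1/k}$, and let $PU_{\mu_i,\xi_0}$ denote the projection of the bubble $U_{\mu_i,\xi_0}$ from (\ref{limjie}) onto $H$, i.e.\ the solution of $\Delta^2 (PU_{\mu_i,\xi_0}) = U_{\mu_i,\xi_0}^{p}$ in $\Omega_\varepsilon$ with $PU = \Delta PU = 0$ on the boundary. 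The ansatz is $W = \sum_{i=1}^k (-1)^{i+1} PU_{\mu_i,\xi_0}$, and we look for a genuine solution of the form $u = W + \phi$ with $\phi$ small and orthogonal (in the $H$-inner product) to the $2k$-dimensional space spanned by the functions $Z_i := \partial_{\mu_i} PU_{\mu_i,\xi_0}$ obtained by differentiating each bubble in its own scaling parameter (we may restrict the centers to $\xi_0$ by symmetry, keeping only the dilation directions, which is why the reduced problem has dimension $k$). One then derives sharp estimates — using the standard bubble-interaction integrals $\int U_{\mu_i}^{p} U_{\mu_j} \asymp (\mu_j/\mu_i)^{(N-4)/2}$ for $i<j$ and the boundary-correction estimate $|U_{\mu_i,\xi_0} - PU_{\mu_i,\xi_0}| \lesssim \mu_i^{(N-4)/2}$ times a Green's-function-type term, cf.\ \cite{Ala,Gazzola} — for the error $\Delta^2 W - |W|^{p-1}W$ in the appropriate dual norm.

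The third step is the linear theory: one shows that the linearized operator $L_\varepsilon \phi := \phi - \left(\Delta^2\right)^{-1}\!\left(p|W|^{p-1}\phi\right)$, restricted to the orthogonal complement of $\mathrm{span}\{Z_i\}$, is invertible with inverse bounded uniformly in $\varepsilon$. This is the technical heart and rests on the non-degeneracy of the bubbles for the biharmonic Yamabe-type equation (the kernel of the linearization of (\ref{e1.3}) at $U_{\mu,\xi}$ is spanned by the $\mu$- and $\xi$-derivatives, a known fact in the fourth-order setting), together with a blow-up/compactness argument to rule out a degenerating sequence — one must be careful that the different bubble scales do not interact destructively, which is exactly where the ordering $\mu_1 \gg \cdots \gg \mu_k$ is used. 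With the linear theory in hand, a contraction-mapping argument in $\phi$ produces, for each admissible choice of $\mathbf{d}=(d_1,\dots,d_k)$ in a suitable compact set of $(0,\infty)^k$, a unique small $\phi = \phi_\varepsilon(\mathbf{d})$ solving the projected problem, depending smoothly on $\mathbf{d}$, with $\|\phi_\varepsilon(\mathbf{d})\| = o(1)$.

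The final step is the reduced finite-dimensional problem. Plugging $u = W + \phi_\varepsilon(\mathbf{d})$ into the energy functional $J_\varepsilon(u) = \tfrac12\int_{\Omega_\varepsilon}|\Delta u|^2 - \tfrac{1}{p+1}\int_{\Omega_\varepsilon}|u|^{p+1}$ gives a function $\mathbf{d}\mapsto c_\varepsilon(\mathbf{d}) := J_\varepsilon(W+\phi_\varepsilon(\mathbf{d}))$ whose critical points correspond to honest solutions of (\ref{e1.1}). Using the error estimates one expands, after rescaling, $c_\varepsilon(\mathbf{d}) = k\,\mathcal{C}_0 + \varepsilon^{1/k}\,\Psi(\mathbf{d}) + o(\varepsilon^{1/k})$, where $\mathcal{C}_0$ is the energy of a single bubble and $\Psi$ is an explicit function of the form $\Psi(\mathbf{d}) = \sum_{i=1}^{k-1} a\,(d_{i+1}/d_i)^{(N-4)/2} + b\sum_{i=1}^k d_i^{N-4} + \ldots$ (the first sum from adjacent opposite-sign bubble interactions, the second from the interaction of each bubble with the boundary of the hole), with $a,b>0$ constants depending only on $N$; one checks that $\Psi$ has a strict local (in fact, interior) minimum at some $\mathbf{d}^\ast = (d_1^\ast,\dots,d_k^\ast)\in(0,\infty)^k$, stable under $o(1)$ perturbations, hence $c_\varepsilon$ has a critical point $\mathbf{d}_\varepsilon \to \mathbf{d}^\ast$. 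This yields the solution $u_\varepsilon = W(\mathbf{d}_\varepsilon) + \phi_\varepsilon(\mathbf{d}_\varepsilon)$ of the asserted form, and $-u_\varepsilon$ is a solution by the oddness of the nonlinearity. I expect the main obstacle to be the linear theory of step three — specifically, proving the uniform invertibility of $L_\varepsilon$ in the presence of $k$ bubbles with widely separated scales glued near the same point and near the boundary of the shrinking hole — since the fourth-order operator lacks a maximum principle, forcing one to work entirely with integral estimates and a delicate blow-up analysis around each scale.
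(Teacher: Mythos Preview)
Your overall Lyapunov--Schmidt strategy is the same as the paper's, but two concrete points in the proposal would cause the argument to fail as written.

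First, you assert that ``we may restrict the centers to $\xi_0$ by symmetry, keeping only the dilation directions, which is why the reduced problem has dimension $k$.'' There is no symmetry hypothesis on $\Omega$ in Theorem~\ref{thm1.1}. The paper (following \cite{Ge}) therefore cannot and does not fix the centers: it takes $\xi_{i\varepsilon}=\xi_0+\mu_{i\varepsilon}\sigma_i$ and performs the reduction over the full $(k+Nk)$-dimensional parameter space $(\mu,\sigma)$, with $K=\mathrm{span}\{P_\varepsilon Z^j_{\mu_{i\varepsilon},\xi_{i\varepsilon}}:j=0,\dots,N,\ i=1,\dots,k\}$. Only \emph{a posteriori} does one show (Lemma~\ref{pro6.4}) that the reduced functional $\Phi(\mu,\sigma)$ has a nondegenerate critical point at $\sigma=0$; this is what ultimately places the bubbles at $\xi_0$ up to $o(1)$. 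Without the translation directions in the reduction, the projected equation $\Pi\{V+\phi-i^*[f(V+\phi)]\}=0$ cannot be solved.

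Second, your reduced functional $\Psi(\mathbf d)=\sum_{i}a\,(d_{i+1}/d_i)^{(N-4)/2}+b\sum_i d_i^{N-4}+\cdots$ and your scaling $\mu_i=d_i\varepsilon^{(2i-1)/(2k)}$ are not the correct leading-order balance. In the paper's expansion (Proposition~\ref{pro3.4}) only the \emph{largest} bubble sees $\partial\Omega$, contributing $c_2H(\xi_0,\xi_0)\mu_1^{N-4}$, and only the \emph{smallest} bubble sees the hole, contributing a term of order $(\varepsilon/\mu_{k\varepsilon})^{N-2}$ (the exponent $N-2$, not $N-4$, comes from the projection estimate of Lemma~\ref{lem6.1}, specifically the $\varphi_2$ term). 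Matching $\mu_{1\varepsilon}^{N-4}$, $(\mu_{(l+1)\varepsilon}/\mu_{l\varepsilon})^{(N-4)/2}$, and $(\varepsilon/\mu_{k\varepsilon})^{N-2}$ forces $\mu_{i\varepsilon}=\mu_i\varepsilon^{\frac{2i-1}{2k}\theta}$ with $\theta=\frac{2k(N-2)}{2k(N-2)-2}>1$, and the reduced functional is
\[
\Phi(\mu,\sigma)=c_2H(\xi_0,\xi_0)\mu_1^{N-4}+c_3\frac{\Delta U_{1,0}(\sigma_k)U_{1,0}(\sigma_k)}{\mu_k^{N-2}}+2\sum_{l=1}^{k-1}\Gamma(\sigma_l)\Big(\frac{\mu_{l+1}}{\mu_l}\Big)^{\frac{N-4}{2}},
\]
whose nondegenerate critical point at $(\hat\mu,0)$ is what gives the constants $d_i$ in the statement. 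With your $\Psi$ the three types of terms do not appear at the same order in $\varepsilon$, and the minimization you sketch does not locate an interior critical point.
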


The proof of Theorem \ref{thm1.1} is based on the Lyapunov-Schmidt reduction method, we can refer to \cite{chen,cdg,Pino1, Pino2, Pino3,demu,Ge,mupi,Musso,Pistoia} and reference therein.
Let us point out the difficulties arising in the construction of the solution $u_\varepsilon$ in Theorem \ref{thm1.1}.
Taking into account that the solution concentrates around the hole and at the same time it must satisfy zero Navier boundary conditions on the boundary of the hole, it turns to be extremely delicate to study the behavior of the solution in the region around the hole, which was given in \cite{Ala}. Moreover, we have to do some fine estimates on interaction of different bubbles. The proof will also provide much finer estimates on the expansion of the energy functional, and  the $C^1$-estimate.

The paper is organized as follows. In Section \ref{prel}, we give some preliminary results.
The proof of the main result is given in Section \ref{mproof}. Section \ref{redu} is devoted to perform the finite dimensional reduction. Section \ref{exp} contains the asymptotic expansion of the reduced energy. Some technical estimates are given in Appendix.

\section{Preliminaries}\label{prel}

In this section, we build the first approximate solution of (\ref{e1.1}). In order to do this, let $P_\varepsilon$ be
the projection of $U_{\mu,\xi}$ on $H^2(\Omega_\varepsilon)\cap H_0^1(\Omega_\varepsilon)$, which satisfies
\begin{equation}\label{PU}
\begin{cases}
\Delta^2 P_\varepsilon U_{\mu,\xi}=U_{\mu,\xi}^p &\text{ in }  \ \Omega_\varepsilon,\\
  P_\varepsilon U_{\mu,\xi}=\Delta P_\varepsilon U_{\mu,\xi}=0 &\text{ on } \ \partial \Omega_\varepsilon,
\end{cases}
\end{equation}
where $U_{\mu,\xi}$ is given in (\ref{limjie}).
Let us denote by $G$ the Green's function of $\Delta^2$ in $\Omega$ under the Navier boundary condition and by $H$ its regular part, so that
\begin{equation*}
G(x,y)=\frac{1}{|x-y|^{N-4}}-H(x,y), \ \text{for}\ (x,y)\in \Omega\times\Omega.
\end{equation*}
The Robin's function $R:\Omega \rightarrow \mathbb{R}$ is defined as $R(x) = H(x, x)$.

By the comparison principle, we have the following crucial estimates.

\begin{lemma}\label{lem6.1}
Let $d>0$ small but fixed and $\xi=\xi_0+\mu \sigma$. If $\mu$ and $\sigma$ satisfy (\ref{e2.8}) and we define
\begin{equation}\label{e6.1}
R:=P_\varepsilon U_{\mu,\xi}-U_{\mu,\xi}+\alpha_N\mu^{\frac{N-4}{2}}H(x,\xi)+a_1 \varphi_1(\frac{x-\xi_0}{\varepsilon})+a_2\varphi_2(\frac{x-\xi_0}{\varepsilon}),
\end{equation}
where
\begin{equation}\label{e6.2}
a_1(\varepsilon,\mu,\sigma)=-\frac{\Delta U_{1,0}(\sigma)}{2(N-4)}\frac{\varepsilon^2}{\mu^{\frac{N}{2}}},\quad \qquad\qquad \varphi_1(x)=\frac{1}{|x|^{N-4}},
\end{equation}
\begin{equation}\label{e6.3}
a_2(\varepsilon,\mu,\sigma)=\frac{U_{1,0}(\sigma)}{\mu^{\frac{N-4}{2}}}+\frac{\Delta U_{1,0}(\sigma)}{2(N-4)}\frac{\varepsilon^2}{\mu^{\frac{N}{2}}},\quad \varphi_2(x)=\frac{1}{|x|^{N-2}},
\end{equation}
then for any $x\in \Omega_\varepsilon$, we have
\begin{equation}\label{e6.4}
|R|\leq c \left(\frac{\varepsilon^{N-1}}{\mu^{\frac{N+2}{2}}}\frac{1}{|x-\xi_0|^{N-4}}+\frac{\varepsilon^{N-1}}{\mu^{\frac{N-2}{2}}}\frac{1}{|x-\xi_0|^{N-2}}\right),
\end{equation}
\begin{equation}\label{e6.41}
|\partial_{\mu}R|\leq c \left(\frac{\varepsilon^{N-1}}{\mu^{\frac{N+4}{2}}}\frac{1}{|x-\xi_0|^{N-4}}+\frac{\varepsilon^{N-1}}{\mu^{\frac{N}{2}}}\frac{1}{|x-\xi_0|^{N-2}}\right),
\end{equation}
\begin{equation}\label{e6.42}
|\partial_{\sigma_i}R|\leq c \left(\frac{\varepsilon^{N-2}}{\mu^{\frac{N}{2}}}\frac{1}{|x-\xi_0|^{N-4}}+\frac{\varepsilon^{N-1}}{\mu^{\frac{N-2}{2}}}\frac{1}{|x-\xi_0|^{N-2}}\right)
\end{equation}
for some positive constants $c$.
\end{lemma}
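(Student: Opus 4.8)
The plan is to observe that, once the explicit correction terms are subtracted, $R$ solves a homogeneous biharmonic equation, and then to estimate it by the comparison principle with explicit barriers built from the two singular profiles $|x-\xi_0|^{4-N}$ and $|x-\xi_0|^{2-N}$ occurring on the right-hand side of (\ref{e6.4}). First I would check that $\Delta^2R=0$ in $\Omega_\varepsilon$: $\Delta^2P_\varepsilon U_{\mu,\xi}=U_{\mu,\xi}^p$ by (\ref{PU}), $\Delta^2U_{\mu,\xi}=U_{\mu,\xi}^p$ since $U_{\mu,\xi}$ solves (\ref{e1.3}) on all of $\mathbb{R}^N$, $H(\cdot,\xi)$ is biharmonic in $\Omega$, and $\varphi_1(x)=|x|^{4-N}$, $\varphi_2(x)=|x|^{2-N}$ are biharmonic in $\mathbb{R}^N\setminus\{0\}$, so (as $\xi_0\notin\Omega_\varepsilon$) the functions $\varphi_j(\tfrac{\cdot-\xi_0}{\varepsilon})$ are biharmonic in $\Omega_\varepsilon$; hence all five terms contribute $0$ to $\Delta^2R$. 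A direct computation also gives $\Delta\varphi_1=-2(N-4)\varphi_2$ and $\Delta\varphi_2=0$ away from the origin, so $\Delta[\varphi_1(\tfrac{x-\xi_0}{\varepsilon})]=-2(N-4)\varepsilon^{-2}\varphi_2(\tfrac{x-\xi_0}{\varepsilon})$ and $\Delta[\varphi_2(\tfrac{x-\xi_0}{\varepsilon})]=0$; these identities, together with the sign of $\Delta U_{1,0}(\sigma)$, are precisely what forces the choice of $a_1,a_2$.

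Because $\Delta^2R=0$ in $\Omega_\varepsilon$, I would invoke the comparison principle for the Navier problem (obtained by iterating the maximum principle for $-\Delta$: if $\Delta^2w\ge0$ in $\Omega_\varepsilon$, $w\ge0$ and $\Delta w\le0$ on $\partial\Omega_\varepsilon$, then $w\ge0$). It therefore suffices to bound $R$ and $\Delta R$ on the two components of $\partial\Omega_\varepsilon$ and to exhibit a biharmonic barrier dominating these data. On $\partial\Omega$ one has $P_\varepsilon U_{\mu,\xi}=\Delta P_\varepsilon U_{\mu,\xi}=0$, so $R$ and $\Delta R$ reduce to combinations of $U_{\mu,\xi}$, $H(\cdot,\xi)$ and the (tiny, since $|x-\xi_0|\sim1$ there) traces of $\varphi_j(\tfrac{\cdot-\xi_0}{\varepsilon})$; using $G(\cdot,\xi)=\Delta_xG(\cdot,\xi)=0$ on $\partial\Omega$ and the far-field expansion $U_{\mu,\xi}(x)=\alpha_N\mu^{(N-4)/2}|x-\xi|^{4-N}+O(\mu^{N/2})$ together with the analogous one for $\Delta U_{\mu,\xi}$, one sees that $U_{\mu,\xi}-\alpha_N\mu^{(N-4)/2}H(\cdot,\xi)$ nearly satisfies the Navier conditions, which bounds $R,\Delta R$ on $\partial\Omega$.

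On $\partial B(\xi_0,\varepsilon)$, where $|x-\xi_0|=\varepsilon$ and $\varepsilon\ll\mu$ by (\ref{e2.8}), I would Taylor-expand $U_{\mu,\xi}(x)=\mu^{-(N-4)/2}U_{1,0}(\tfrac{x-\xi_0}{\mu}-\sigma)$ and $\Delta U_{\mu,\xi}(x)=\mu^{-N/2}(\Delta U_{1,0})(\tfrac{x-\xi_0}{\mu}-\sigma)$ around $x=\xi_0$; since $\varphi_j(\tfrac{x-\xi_0}{\varepsilon})\equiv1$ on $\partial B(\xi_0,\varepsilon)$ and $\Delta[\varphi_1(\tfrac{x-\xi_0}{\varepsilon})]=-2(N-4)\varepsilon^{-2}$ there, the definitions of $a_1,a_2$ cancel exactly the zeroth-order term of $U_{\mu,\xi}$ and that of $\Delta U_{\mu,\xi}$, leaving a remainder of the size permitted by (\ref{e6.4}) and its Laplacian on $|x-\xi_0|=\varepsilon$. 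I would then take $W(x)=c\big(\tfrac{\varepsilon^{N-1}}{\mu^{(N+2)/2}}|x-\xi_0|^{4-N}+\tfrac{\varepsilon^{N-1}}{\mu^{(N-2)/2}}|x-\xi_0|^{2-N}\big)$, possibly supplemented by a lower-order biharmonic term to absorb the $\partial\Omega$ contributions; $W$ is biharmonic with $\Delta W\le0$, and for $c$ large $W\pm R\ge0$, $\Delta(W\pm R)\le0$ on $\partial\Omega_\varepsilon$, so the comparison principle yields $|R|\le W$, i.e. (\ref{e6.4}). For (\ref{e6.41})--(\ref{e6.42}) I would differentiate the defining relation of $R$ in $\mu$ and $\sigma_i$: $\partial_\mu R$ and $\partial_{\sigma_i}R$ are again biharmonic in $\Omega_\varepsilon$, their boundary traces come from differentiating the expansions above (using $\partial_{\sigma_i}(U_{\mu,\xi_0+\mu\sigma})=\mu\,\partial_{\xi_i}U_{\mu,\xi}$, the explicit $\mu,\sigma$ dependence of $a_1,a_2$, and the fact that $\partial_\mu$ lowers one power of $\mu$), and the same comparison argument with the rescaled barriers gives (\ref{e6.41})--(\ref{e6.42}).

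The main obstacle is the boundary-layer analysis on $\partial B(\xi_0,\varepsilon)$: one must carry the Taylor expansions of $U_{\mu,\xi}$ and $\Delta U_{\mu,\xi}$ exactly to the order at which $a_1\varphi_1+a_2\varphi_2$ produces the cancellation, and then verify that every remaining term---together with the contribution of $\alpha_N\mu^{(N-4)/2}H$ near the hole and of the far field near $\partial\Omega$---is dominated, under (\ref{e2.8}), by the two explicit profiles appearing in (\ref{e6.4}); the derivative bounds require repeating this bookkeeping after differentiation, and this is where essentially all of the technical effort is concentrated.
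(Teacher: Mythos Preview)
Your proposal is correct and follows essentially the same strategy as the paper: verify that $R$ (and its $\mu$-, $\sigma_i$-derivatives) is biharmonic in $\Omega_\varepsilon$, compute and estimate the Navier data on the two boundary components, and conclude via the comparison principle with a barrier built from $|x-\xi_0|^{4-N}$ and $|x-\xi_0|^{2-N}$. The only organizational difference is that the paper first rescales by $\varepsilon$, setting $y=(x-\xi_0)/\varepsilon$ and working with $\hat R_\mu(y)=\mu^{-(N-6)/2}\partial_\mu R(\varepsilon y+\xi_0)$ (and analogously for $\partial_{\sigma_i}R$) on $\varepsilon^{-1}(\Omega-\xi_0)\setminus\overline{B_1(0)}$; this fixes the inner sphere at $|y|=1$ and pushes the outer boundary to infinity, which makes the barrier bookkeeping (including the extra lower-order terms $\varepsilon^2\mu^2|y|^2+\varepsilon$ absorbing the $\partial\Omega$ contribution you allude to) slightly cleaner, but the content is the same as what you outline.
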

\begin{proof}
We omit the proof of (\ref{e6.4}), which was given in \cite[Proposition 2.1]{Ala}, we are left to prove (\ref{e6.41}) and (\ref{e6.42}).
Let us denote by $\partial_{\mu}R=R_\mu(x)$ and define $\hat{R}_\mu(y)=\mu^{-\frac{N-6}{2}}R_\mu(\varepsilon y+\xi_0)$. Then function $\hat{R}_\mu(y)$ solves $\Delta^2 \hat{R}_\mu=0$ in $(\varepsilon^{-1}(\Omega-\xi_0)\setminus \overline{B_1(0)})$.

When $y\in \partial(\varepsilon^{-1}(\Omega-\xi_0))$, we have
\begin{align*}
\hat{R}_\mu(y)=&-\alpha_N\frac{N-4}{2}\left(\frac{|\varepsilon y-\mu \sigma|^2-\mu^2}{(\mu^2+|\varepsilon y-\mu \sigma|^2)^{\frac{N-2}{2}}}+2\mu\frac{(\varepsilon y-\mu \sigma, \sigma)}{(\mu^2+|\varepsilon y-\mu \sigma|^2)^{\frac{N-2}{2}}}\right)
\nonumber\\
&+\alpha_N\frac{N-4}{2}\left(\frac{1}{|\varepsilon y-\mu \sigma|^{N-4}}+\frac{2\mu(\varepsilon y-\mu \sigma, \sigma)}{|\varepsilon y-\mu \sigma|^{N-2}}\right)+\frac{N}{2}\frac{\Delta U_{1,0}(\sigma)}{2(N-4)}\frac{\varepsilon^2}{\mu^{N-2}}\frac{1}{|y|^{N-4}}
\nonumber\\
&-\left(\frac{N-4}{2}\frac{U_{1,0}(\sigma)}{\mu^{N-4}}+
\frac{N}{2}\frac{\Delta U_{1,0}(\sigma)}{2(N-4)}\frac{\varepsilon^2}{\mu^{N-2}}\right)\frac{1}{|y|^{N-2}},
\end{align*}
and
\begin{align*}
\Delta \hat{R}_\mu=&\frac{(N-4)\alpha_N\varepsilon^2}{2}\left[\frac{2(N-4)}{(\mu^2+|\varepsilon y-\mu \sigma|^2)^{\frac{N-2}{2}}}
+\frac{(N-2)\big((N-4)\mu^2+4\mu(\varepsilon y-\mu \sigma, \sigma)\big)}{(\mu^2+|\varepsilon y-\mu \sigma|^2)^{\frac{N}{2}}}\right.
\nonumber\\
&\qquad\qquad\qquad\ +\frac{2N(N-2)\mu^2\big(|\varepsilon y-\mu \sigma|^2+(\varepsilon y-\mu \sigma, \mu\sigma)\big)}{(\mu^2+|\varepsilon y-\mu \sigma|^2)^{\frac{N+2}{2}}}
-\frac{2(N-4)}{|\varepsilon y-\mu \sigma|^{N-2}}
\nonumber\\
&\qquad\qquad\qquad\ \left.-\frac{4(N-2)\mu(\varepsilon y-\mu \sigma, \sigma)}{|\varepsilon y-\mu \sigma|^N}\right]
-\frac{N}{2}\frac{\Delta U_{1,0}(\sigma)}{|y|^{N-2}}\frac{\varepsilon^{2}}{\mu^{N-2}}.
\end{align*}
Moreover, $|\hat{R}_\mu|=O(\mu^2+(\frac{\varepsilon}{\mu})^{N-2})$,  $|\Delta \hat{R}_\mu|=O(\varepsilon^2 (\mu^2+(\frac{\varepsilon}{\mu})^{N-2}))$ for $y\in \partial(\varepsilon^{-1}(\Omega-\xi_0))$.

For $y\in \partial B_1(0)$, we get
\begin{align*}
\hat{R}_\mu(y)=&-\alpha_N\frac{N-4}{2}\left(\frac{|\varepsilon y-\mu \sigma|^2-\mu^2}{(\mu^2+|\varepsilon y-\mu \sigma|^2)^{\frac{N-2}{2}}}+2\mu\frac{(\varepsilon y-\mu \sigma, \sigma)}{(\mu^2+|\varepsilon y-\mu \sigma|^2)^{\frac{N-2}{2}}}\right)
\nonumber\\
&+\alpha_N\frac{N-4}{2}\left(H(\varepsilon y+\xi_0, \xi)+\frac{2\mu}{N-4}(\nabla_{\xi}H(\varepsilon y+\xi_0, \xi), \sigma)\right)
-\frac{N-4}{2}\frac{U_{1,0}(\sigma)}{\mu^{N-4}},
\end{align*}
\begin{align*}
\Delta \hat{R}_\mu=&\frac{(N-4)\alpha_N\varepsilon^2}{2}\left[\frac{2(N-4)}{(\mu^2+|\varepsilon y-\mu \sigma|^2)^{\frac{N-2}{2}}}
+\frac{(N-2)\big((N-4)\mu^2+4\mu(\varepsilon y-\mu \sigma, \sigma)\big)}{(\mu^2+|\varepsilon y-\mu \sigma|^2)^{\frac{N}{2}}}\right.
\nonumber\\
&\qquad\qquad+\frac{2N(N-2)\mu^2\big(|\varepsilon y-\mu \sigma|^2+(\varepsilon y-\mu \sigma, \mu\sigma)\big)}{(\mu^2+|\varepsilon y-\mu \sigma|^2)^{\frac{N+2}{2}}}
-\Delta H(\varepsilon y+\xi_0, \xi)
\nonumber\\
&\qquad\qquad\left.+\frac{2\mu}{N-4}\Delta(\nabla_{\xi}H(\varepsilon y+\xi_0, \xi),\sigma)\right]
-\frac{N\Delta U_{1,0}(\sigma)}{2}\frac{\varepsilon^{2}}{\mu^{N-2}}.
\end{align*}
Moreover, $|\hat{R}_\mu|=O(\frac{\varepsilon}{\mu^{N-3}})$, $|\Delta \hat{R}_\mu|=O(\frac{\varepsilon^3}{\mu^{N-1}})$ for $ y\in \partial B_1(0)$.

Thus, by using a comparison argument, we get
\begin{align*}
|\hat{R}_\mu(y)|\leq c \left(\frac{\varepsilon^{3}}{\mu^{N-1}}\frac{1}{|y|^{N-4}}+\frac{\varepsilon}{\mu^{N-3}}\frac{1}{|y|^{N-2}}
+\varepsilon^2\mu^2|y|^2+\varepsilon\right).
\end{align*}
This implies that (\ref{e6.41}) holds.

Finally, let us denote by $\partial_{\sigma_i}R(x)=R_i(x)$ and define $\hat{R}_i(y)=\mu^{-\frac{N-2}{2}}R_i(\varepsilon y+\xi_0)$. Then,
\begin{align*}
\hat{R}_i(y)=&\alpha_N\frac{(N-4)}{2}\bigg(\frac{-2(\varepsilon y-\mu \sigma)_i}{(\mu^2+|\varepsilon y-\mu \sigma|^2)^{\frac{N-2}{2}}}-\frac{-2(\varepsilon y-\mu \sigma)_i}{|\varepsilon y-\mu \sigma|^{N-2}}\bigg)
\nonumber\\
&+\alpha_N\frac{(N-4)}{2}\bigg(\frac{4\sigma_i}{(1+|\sigma|^2)^{\frac{N}{2}}}-\frac{N\sigma_i(2|\sigma|^2+N)}{(1+|\sigma|^2)^{\frac{N+2}{2}}}  \bigg)\frac{\varepsilon^2}{(N-4)\mu^{N-1}}\frac{1}{|y|^{N-4}}
\nonumber\\
&+\alpha_N\frac{(N-4)}{2}\bigg(\frac{-2\sigma_i}{\mu^{N-3}(1+|\sigma|^2)^{\frac{N-2}{2}}}
\nonumber\\
&-\frac{1}{N-4}\frac{\varepsilon^2}{\mu^{N-1}}\bigg( \frac{4\sigma_i}{(1+|\sigma|^2)^{\frac{N}{2}}}-\frac{N\sigma_i(2|\sigma|^2+N)}{(1+|\sigma|^2)^{\frac{N+2}{2}}}\bigg) \bigg)\frac{1}{|y|^{N-2}}
\end{align*}
for $y\in \partial(\varepsilon^{-1}(\Omega-\xi_0))$, and
\begin{align*}
\hat{R}_i(y)=\alpha_N\frac{(N-4)}{2}\bigg(\frac{-2(\varepsilon y-\mu \sigma)_i}{(\mu^2+|\varepsilon y-\mu \sigma|^2)^{\frac{N-2}{2}}}+\frac{2}{N-4}\nabla_{\xi_i}H(\varepsilon y+\xi_0, \xi)-\frac{-2\sigma_i}{\mu^{N-3}(1+|\sigma|^2)^{\frac{N-2}{2}}}\bigg)
\end{align*}
for $y\in \partial B_1(0)$.
Moreover,
\begin{align*}
\Delta\hat{R}_i=&\alpha_N(N-4)\varepsilon^2\bigg(\frac{3(N-2)\Sigma_{i=1}^N(\varepsilon y-\mu \sigma)_i}{(\mu^2+|\varepsilon y-\mu \sigma|^2)^{\frac{N}{2}}}-\frac{N(N-2)\Sigma_{i=1}^N(\varepsilon y-\mu \sigma)_i^3}{(\mu^2+|\varepsilon y-\mu \sigma|^2)^{\frac{N+2}{2}}}
\nonumber\\
&-\frac{3(N-2)\Sigma_{i=1}^N(\varepsilon y-\mu \sigma)_i}{|\varepsilon y-\mu \sigma|^{N}}+\frac{N(N-2)\Sigma_{i=1}^N(\varepsilon y-\mu \sigma)_i^3}{|\varepsilon y-\mu \sigma|^{N+2}}
\nonumber\\
&-\frac{1}{\mu^{N-1}}\frac{1}{|y|^{N-2}}\bigg(\frac{4\sigma_i}{(1+|\sigma|^2)^{\frac{N}{2}}}
-\frac{N\sigma_i(2|\sigma|^2+N)}{(1+|\sigma|^2)^{\frac{N+2}{2}}}\bigg)\bigg)
\end{align*}
for $y\in \partial(\varepsilon^{-1}(\Omega-\xi_0))$, and
\begin{align*}
\Delta\hat{R}_i=&\alpha_N(N-4)\varepsilon^2\bigg(\frac{3(N-2)\Sigma_{i=1}^N(\varepsilon y-\mu \sigma)_i}{(\mu^2+|\varepsilon y-\mu \sigma|^2)^{\frac{N}{2}}}-\frac{N(N-2)\Sigma_{i=1}^N(\varepsilon y-\mu \sigma)_i^3}{(\mu^2+|\varepsilon y-\mu \sigma|^2)^{\frac{N+2}{2}}}
\nonumber\\
&+\frac{\Delta(\nabla_{\xi_i}H(\varepsilon y+\xi_0, \xi))}{N-4}
-\frac{1}{\mu^{N-1}}\bigg( \frac{4\sigma_i}{(1+|\sigma|^2)^{\frac{N}{2}}}
-\frac{N\sigma_i(2|\sigma|^2+N)}{(1+|\sigma|^2)^{\frac{N+2}{2}}}\bigg) \bigg)
\end{align*}
for $y\in \partial B_1(0).$
Therefore
\begin{align*}
|\hat{R}_i|=O(\frac{\varepsilon^{N-2}}{\mu^{N-1}}+\mu^2),\  |\Delta \hat{R}_i|=O(\varepsilon^2(\frac{\varepsilon^{N-2}}{\mu^{N-1}}+\mu^2))\   \text{ for } y\in \partial(\varepsilon^{-1}(\Omega-\xi_0)).
\end{align*}
\begin{align*}
|\hat{R}_i|=O(\frac{\varepsilon}{\mu^{N-2}}),\ |\Delta \hat{R}_i|=O(\frac{\varepsilon^2}{\mu^{N-1}})\   \text{ for } y\in \partial B_1(0).
\end{align*}
It is similar to the previous proof, we can get (\ref{e6.42}).
\end{proof}

The space $H^2(\Omega_\varepsilon)\cap H_0^1(\Omega_\varepsilon)$ is equipped with
the scalar product $\langle u, v\rangle=\int_{\Omega_\varepsilon}\Delta u \Delta v$, which induces the norm
\begin{equation*}
\|u\|=\left(\int_{\Omega_\varepsilon}|\Delta u|^2\right)^{\frac{1}{2}}.
\end{equation*}
If $u\in L^q(\Omega_\varepsilon)$, we denote by $|u|_q=(\int_{\Omega_\varepsilon}|u|^q)^{\frac{1}{q}}$ the $L^q $-norm.
By Sobolev embedding Theorem, there exists $C>0$, depending only on $N$, such that $|u|_{\frac{2N}{N-4}}\leq C\|u\|$ for all $u\in H^2(\Omega_\varepsilon)\cap H_0^1(\Omega_\varepsilon)$.
Consider now the adjoint operator of the embedding $i:H^2(\Omega_\varepsilon)\cap H_0^1(\Omega_\varepsilon)\hookrightarrow L^{\frac{2N}{N-4}}(\Omega_\varepsilon)$, namely the map
$i^*:L^{\frac{2N}{N+4}}(\Omega_\varepsilon)\hookrightarrow H^2(\Omega_\varepsilon)\cap H_0^1(\Omega_\varepsilon)$ is
defined as the (unique) weak solution of
\begin{equation*}
\Delta^2 u=\omega\ \ \text{in}  \ \Omega_\varepsilon,\qquad
u=\Delta u=0\ \ \text{on} \ \partial \Omega_\varepsilon.
\end{equation*}
Thus
\begin{equation*}
i^*(\omega)=u \Leftrightarrow \langle u,\varphi\rangle=\int_{\Omega_\varepsilon}\omega \varphi dx,  \quad  \forall \varphi \in H^2(\Omega_\varepsilon)\cap H_0^1(\Omega_\varepsilon).
\end{equation*}
Moreover, there exists a positive
constant $c$, which depends only on the dimension $N$, such that
\begin{equation}\label{e2.1}
\| i^*(\omega) \| \leq c|\omega |_{\frac{2N}{N+4}}  \quad \text{for\ all}\ \omega \in L^{\frac{2N}{N+4}}(\Omega_\varepsilon).
\end{equation}
Using the above definitions and notations, problem (\ref{e1.1}) can be rewritten as follows
\begin{equation}\label{e2.2}
u=i^*[f(u)],\quad u\in H^2(\Omega_\varepsilon)\cap H_0^1(\Omega_\varepsilon),
\end{equation}
where $f(u)=|u|^{p-1}u$ with $p=\frac{N+4}{N-4}$.

We next describe the shape of the solutions we are looking for.
Let $d>0$ be a small but fixed number. Given an integer $k$, let $\mu_{i}$, $i=1,2,\cdots,k$, be positive numbers and $\sigma_i$, $i=1,2,\cdots,k$, be points in $\mathbb{R}^N$ satisfying
\begin{equation}\label{e2.8}
d<\mu_{i}<\frac{1}{d} \ \text{and} \  |\sigma_i|<\frac{1}{d} \ \text{for} \  i=1,\ldots,k.
\end{equation}
We assume that
\begin{equation}\label{e2.6}
\mu_{i\varepsilon}=\mu_{i}\varepsilon^{\frac{2i-1}{2k} \theta}\  \text{with}\ \theta=\frac{2k(N-2)}{2k(N-2)-2},\ \ \ \text{and}\  \xi_{i\varepsilon}=\xi_0+\mu_{i\varepsilon}\sigma_i.
\end{equation}
We define
\begin{equation}\label{e2.5}
V(x)=\sum_{i=1}^k(-1)^{i+1}P_{\varepsilon}U_{\mu_{i\varepsilon},\xi_{i\varepsilon}}(x).
\end{equation}
We will use the compact notation $\mu=(\mu_1, \mu_2, \cdots, \mu_k)\in \mathbb{R}_+^k$ and $\sigma=(\sigma_1, \sigma_2, \cdots, \sigma_k)\in \mathbb{R}^{Nk}$.

We will look for the solutions of (\ref{e1.1}) or (\ref{e2.2}) with the form
\begin{equation}\label{e2.3}
u(x)=V(x)+\phi(x).
\end{equation}
Here the term $\phi$ is a smaller perturbation of $V$.

We next describe the term $\phi$ in (\ref{e2.3}). Let us recall (see \cite{Lu}) that every bounded solution to the linear equation
\begin{equation*}
\Delta^2\nu-f'(U_{\mu,\xi})\nu=0 \ \text{in} \ \mathbb{R}^N, \ \nu\in D^{2,2}(\mathbb{R}^N),
\end{equation*}
is a linear combination of the functions
\begin{equation*}
Z^0_{\mu,\xi}(x):=\frac{\partial U_{\mu,\xi}}{\partial \mu}=\alpha_N \left(\frac{N-4}{2}\right)\mu^{\frac{N-6}{2}}\frac{|x-\xi|^2-\mu^2}{(\mu^2+|x-\xi|^2)^{\frac{N-2}{2}}},
\end{equation*}
\begin{equation*}
Z^i_{\mu,\xi}(x):=\frac{\partial U_{\mu,\xi}}{\partial \xi_i}=\alpha_N (N-4)\mu^{\frac{N-4}{2}}\frac{x_i-\xi_i}{(\mu^2+|x-\xi|^2)^{\frac{N-2}{2}}}, \ i=1,\cdots,N.
\end{equation*}
We denote by $P_\varepsilon Z^i_{\mu,\xi}$ the projection of $Z^i_{\mu,\xi}$ onto $H^2(\Omega_\varepsilon)\cap H_0^1(\Omega_\varepsilon)$ and we define the subspace of $H^2(\Omega_\varepsilon)\cap H_0^1(\Omega_\varepsilon)$
\begin{equation*}
K:=\text{span}\{P_\varepsilon Z^i_{\mu_{j\varepsilon},\xi_{j\varepsilon}}:\ i=0,1,\cdots,N, j=1,\cdots,k \},
\end{equation*}
and
\begin{equation*}
K^\bot:=\{\phi \in H^2(\Omega_\varepsilon)\cap H_0^1(\Omega_\varepsilon): \langle \phi,P_\varepsilon Z^i_{\mu_{j\varepsilon},\xi_{j\varepsilon}} \rangle =0, \ i=0,1,\cdots,N, j=1,\cdots,k \}.
\end{equation*}
Let $\Pi: H^2(\Omega_\varepsilon)\cap H_0^1(\Omega_\varepsilon)\rightarrow K$ and $\Pi^\bot:H^2(\Omega_\varepsilon)\cap H_0^1(\Omega_\varepsilon)\rightarrow K^\bot$ be the orthogonal projections.
In order to solve problem (\ref{e2.2}), we will solve the couple of equations
\begin{equation}\label{e2.9}
\Pi^\bot \{V+\phi-i^*[f(V+\phi)]\}=0,
\end{equation}
\begin{equation}\label{e2.10}
\Pi \{V+\phi-i^*[f(V+\phi)]\}=0.
\end{equation}

\section{Scheme of the proof}\label{mproof}

We first give the following result, whose proof is postponed to Section \ref{redu} to solve equation (\ref{e2.9}).

\begin{proposition}\label{pro3.1}
For any $d>0$ small but fixed, there exist $\varepsilon_0>0$ and $c>0$ such that for any $\mu\in\mathbb{R}^k_{+}$ and $\sigma\in\mathbb{R}^{Nk}$ satisfying (\ref{e2.8}) and for any $\varepsilon \in(0,\varepsilon_0)$, there exists a unique solution $\phi=\phi(\mu,\sigma)$ which solves equation (\ref{e2.9}). Moreover
\begin{align}\label{e3.5}
\|\phi\|\leq c
\begin{cases}\varepsilon^{\frac{(N-4)\theta}{2k}\frac{p}{2}}, &\text{ if }N\geq 13,\\
  \varepsilon^{\frac{(N-4)\theta}{2k}} |\ln \varepsilon|, &\text{ if }N=12,\\
  \varepsilon^{\frac{(N-4)\theta}{2k}}, &\text{ if }5 \leq N \leq 11.\end{cases}
\end{align}
Finally, $(\mu,\sigma)\mapsto \phi(\mu,\sigma)$ is a $C^1$-map.
\end{proposition}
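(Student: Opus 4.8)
The plan is to recast \eqref{e2.9} as a fixed point equation on $K^\bot$ and solve it by contraction, after establishing the uniform invertibility of the linearized operator. Writing $f(u)=|u|^{p-1}u$ and $f'(V)$ for its derivative at the approximate solution $V$, introduce the linear map $L\colon K^\bot\to K^\bot$, $L(\phi)=\Pi^\bot\{\phi-i^*[f'(V)\phi]\}$, the superquadratic remainder $\mathcal N(\phi)=\Pi^\bot i^*[f(V+\phi)-f(V)-f'(V)\phi]$, and the error term $\mathcal R=\Pi^\bot\{i^*[f(V)]-V\}$. Because $\Delta^2 P_\varepsilon U_{\mu_{i\varepsilon},\xi_{i\varepsilon}}=U_{\mu_{i\varepsilon},\xi_{i\varepsilon}}^p$, one has $i^*[f(V)]-V=i^*\big[f(V)-\sum_{i=1}^k(-1)^{i+1}U_{\mu_{i\varepsilon},\xi_{i\varepsilon}}^p\big]$. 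With this notation \eqref{e2.9} becomes $L(\phi)=\mathcal N(\phi)+\mathcal R$, equivalently $\phi=T(\phi):=L^{-1}\big(\mathcal N(\phi)+\mathcal R\big)$, provided $L$ is invertible.

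The key step is to show that there are $c>0$ and $\varepsilon_0>0$, independent of the parameters, with $\|L(\phi)\|\ge c\|\phi\|$ for all $\phi\in K^\bot$, all $(\mu,\sigma)$ satisfying \eqref{e2.8} and all $\varepsilon\in(0,\varepsilon_0)$; since $L$ is a compact perturbation of the identity on $K^\bot$, together with the Fredholm alternative this gives $\|L^{-1}\|\le c^{-1}$ uniformly. I would argue by contradiction: if the bound fails there are $\phi_n\in K^\bot$ with $\|\phi_n\|=1$ and $\|L(\phi_n)\|\to0$; rescaling $\phi_n$ around each concentration point $\xi_{i\varepsilon}$ at scale $\mu_{i\varepsilon}$ and passing to the limit, the nondegeneracy of the standard bubble --- the classification of bounded solutions of $\Delta^2\nu=f'(U_{\mu,\xi})\nu$ recalled after \eqref{e2.3}, see \cite{Lu} --- shows that each limit is a linear combination of the functions $Z^i_{\mu,\xi}$, while the orthogonality conditions defining $K^\bot$ force these limits to vanish. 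A separate argument in the region away from the bubbles, and especially near $\partial B(\xi_0,\varepsilon)$ where the Navier condition and the estimates of Lemma \ref{lem6.1} and of \cite{Ala} must be used, then gives $\|\phi_n\|\to0$, contradicting $\|\phi_n\|=1$.

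It remains to estimate $\mathcal R$ and to run the contraction. By \eqref{e2.1}, $\|\mathcal R\|\le c\,\big|f(V)-\sum_{i=1}^k(-1)^{i+1}U_{\mu_{i\varepsilon},\xi_{i\varepsilon}}^p\big|_{\frac{2N}{N+4}}$, and this difference decomposes into the interaction between consecutive bubbles and the projection errors $P_\varepsilon U_{\mu_{i\varepsilon},\xi_{i\varepsilon}}-U_{\mu_{i\varepsilon},\xi_{i\varepsilon}}$; the choice \eqref{e2.6}, in particular $\theta=\frac{2k(N-2)}{2k(N-2)-2}$, is exactly what balances these two effects, and carrying out the standard pointwise and $L^{\frac{2N}{N+4}}$ estimates (with the usual dichotomy according to whether $p<2$, $p=2$ or $p>2$, that is $N\ge13$, $N=12$ or $5\le N\le11$) yields $\|\mathcal R\|\le c\,r_\varepsilon$, where $r_\varepsilon$ denotes the right-hand side of \eqref{e3.5}. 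Using this, $\|L^{-1}\|\le c^{-1}$, and the elementary bounds $\|\mathcal N(\phi)\|\le c\|\phi\|^{\min\{2,p\}}$ and $\|\mathcal N(\phi_1)-\mathcal N(\phi_2)\|\le c\,(\|\phi_1\|+\|\phi_2\|)^{\min\{2,p\}-1}\|\phi_1-\phi_2\|$, one checks that for $\varepsilon$ small $T$ maps the ball $\{\phi\in K^\bot:\|\phi\|\le C r_\varepsilon\}$ into itself and is a contraction there; the unique fixed point is the solution $\phi(\mu,\sigma)$ of \eqref{e2.9}, which then satisfies \eqref{e3.5}.

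Finally, the $C^1$ dependence of $\phi$ on $(\mu,\sigma)$ would follow from the implicit function theorem applied to $F(\phi,\mu,\sigma)=\Pi^\bot\{V+\phi-i^*[f(V+\phi)]\}$, a $C^1$ map in all its arguments (as $V$ depends smoothly on $(\mu,\sigma)$), since $\partial_\phi F$ at the solution coincides with $L$ up to terms that are small in operator norm; the derivative estimates \eqref{e6.41}--\eqref{e6.42} of Lemma \ref{lem6.1} are precisely what is needed to control $\partial_\mu\mathcal R$ and $\partial_{\sigma_i}\mathcal R$. The main obstacle is the uniform invertibility of $L$: the hole $B(\xi_0,\varepsilon)$ and the Navier boundary condition on its boundary make the blow-up analysis near the hole the delicate point, and this is exactly where Lemma \ref{lem6.1} and the fine analysis of \cite{Ala} enter the argument; a secondary technical difficulty is the careful bookkeeping of the interaction and projection errors that produces the exponents appearing in \eqref{e3.5}.
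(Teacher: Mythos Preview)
Your proposal is correct and follows essentially the same route as the paper: the decomposition $L(\phi)=\mathcal N(\phi)+\mathcal R$, the uniform coercivity of $L$ on $K^\bot$ by a contradiction/blow-up argument based on the nondegeneracy of the bubble, the $L^{\frac{2N}{N+4}}$ estimate of $\mathcal R$ with the three regimes in $N$, the contraction mapping on a ball of radius $Cr_\varepsilon$, and the implicit function theorem for the $C^1$ dependence all match the paper's Lemma~\ref{lem3.2}, Proposition~\ref{pro3.3}, Lemma~\ref{lem4.4}, and the proof of Proposition~\ref{pro3.1}. One small point of emphasis: in the paper the invertibility proof (Lemma~\ref{lem3.2}) proceeds by the annular decomposition and rescaling and does not invoke Lemma~\ref{lem6.1} or \cite{Ala} directly; those fine projection estimates enter instead in the bound on $\mathcal R$ (Lemma~\ref{lem4.4}) and in the later energy expansions, while the $C^1$ dependence is obtained purely from $D_\phi F=L-D_\phi\mathcal N(\phi)$ being invertible, without needing to estimate $\partial_\mu\mathcal R$ or $\partial_{\sigma_i}\mathcal R$.
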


The energy functional associated to (\ref{e1.1}) is defined by
\begin{align}\label{e3.39}
J_\varepsilon (u)=\frac{1}{2}\int_{\Omega_\varepsilon}|\Delta u|^2dx-\frac{1}{p+1}\int_{\Omega_\varepsilon}|u|^{p+1}dx,\ \ \ \mbox{for}\ u\in H^2(\Omega_\varepsilon)\cap H_0^1(\Omega_\varepsilon) .
\end{align}
We define the function $I:\mathbb{R}_+^{k} \times \mathbb{R}^{Nk}\rightarrow  \mathbb{R}$ by
\begin{equation}\label{e3.40}
I(\mu,\sigma)=J_\varepsilon (V+\phi),
\end{equation}
where $V$ is defined in  (\ref{e2.5}) and the existence of $\phi$ is guaranteed by Proposition \ref{pro3.1}.

The next result, whose proof is postponed until Section \ref{exp}, allows us to solve equation (\ref{e2.10}), by reducing the problem to a finite dimensional one.

\begin{proposition}\label{pro3.4}
(i) If $(\mu,\sigma)$ is a critical point of $I$, then $V+\phi$ is a solution to (\ref{e2.2}), or equivalently of problem (\ref{e1.1}).

(ii) 
There holds
\begin{align}\label{e4.1}
I(\mu,\sigma)=&\frac{2kc_1}{N}\alpha_N^{p+1}+\frac{\alpha_N^{p+1}}{2}\Phi(\mu,\sigma)\varepsilon^{\frac{N-4}{2k}\theta}+o(\varepsilon^{\frac{N-4}{2k}\theta}),
\end{align}
as $\varepsilon\to0$, $C^1$-uniformly with respect to $\mu$ and $\sigma$ satisfying (\ref{e2.8}),
where
\begin{align}\label{e4.2}
\Phi(\mu,\sigma)=c_2H(\xi_0,\xi_0)\mu_1^{N-4}+c_3\frac{\Delta U_{1,0}(\sigma_k)U_{1,0}(\sigma_k)}{\mu_k^{N-2}}+2\sum_{l=1}^{k-1}\Gamma(\sigma_l)(\frac{\mu_{l+1}}{\mu_{l}})^{\frac{N-4}{2}},
\end{align}
and 
\begin{equation}\label{e4.17}
c_1= \int_{\mathbb{R}^N}\frac{1}{(1+|z|^2)^N},\quad c_2= \int_{\mathbb{R}^N}\frac{1}{(1+|z|^2)^{\frac{N+4}{2}}},\quad c_3=-\frac{3(N-2)meas(\mathbb{S}^{N-1})}{2\alpha_N^{p+1}},
\end{equation}
\begin{equation}\label{e4.20}
\Gamma (x)=\int_{\mathbb{R}^N}\frac{1}{(1+|y-x|^2)^{\frac{N+4}{2}}}\frac{1}{|y|^{N-4}}dy,\ \ x\in\mathbb{R}^N.
\end{equation}
\end{proposition}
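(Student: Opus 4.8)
## Proof Strategy for Proposition \ref{pro3.4}

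\textbf{Part (i): Variational reduction.} The plan is to show that critical points of the reduced functional $I$ produce genuine solutions. Since $\phi = \phi(\mu,\sigma)$ already solves \eqref{e2.9}, it suffices to verify that $V+\phi - i^*[f(V+\phi)] \in K$ implies this term vanishes when $(\mu,\sigma)$ is critical for $I$. First I would differentiate the identity $I(\mu,\sigma) = J_\varepsilon(V+\phi)$ with respect to $\mu_j$ and $\sigma_{j,i}$, using the chain rule. Because $J_\varepsilon'(V+\phi)[\partial_{\mu_j}\phi] = 0$ and $J_\varepsilon'(V+\phi)[\partial_{\sigma_{j,i}}\phi] = 0$ (as $\partial\phi$ lies in $K^\perp$ up to lower-order corrections coming from the $\varepsilon$-dependence of the $P_\varepsilon Z$'s), the vanishing of $\partial I$ translates into a linear system in the Lagrange multipliers whose coefficient matrix is, by Proposition \ref{pro3.1} and standard estimates, diagonally dominant hence invertible for $\varepsilon$ small. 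This forces all multipliers to be zero, so $V+\phi$ solves \eqref{e2.2}. This part is routine given the earlier setup.

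\textbf{Part (ii): Energy expansion — the main computation.} The plan is to expand $J_\varepsilon(V+\phi)$ in powers of $\varepsilon$. First, using \eqref{e3.5} and the fact that $V+\phi$ is (after projection) close to a critical point, I would show $J_\varepsilon(V+\phi) = J_\varepsilon(V) + o(\varepsilon^{\frac{N-4}{2k}\theta})$; this uses $\|\phi\|$ small, the exponents in \eqref{e3.5}, and a Taylor expansion of $J_\varepsilon$ around $V$ where the linear term vanishes on $K^\perp$ and the quadratic term is controlled. Then the core task is to expand $J_\varepsilon(V)$ where $V = \sum_{i=1}^k (-1)^{i+1} P_\varepsilon U_{\mu_{i\varepsilon},\xi_{i\varepsilon}}$. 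I would write
\[
J_\varepsilon(V) = \frac{1}{2}\sum_{i,j} (-1)^{i+j}\langle P_\varepsilon U_i, P_\varepsilon U_j\rangle - \frac{1}{p+1}\int_{\Omega_\varepsilon}\Big|\sum_i (-1)^{i+1} P_\varepsilon U_i\Big|^{p+1},
\]
and handle the diagonal terms, the nearest-neighbour interaction terms, and the boundary/hole corrections separately. The diagonal terms each give $\frac{c_1}{N}\alpha_N^{p+1}$ up to lower order (the standard bubble energy), contributing the leading constant $\frac{2kc_1}{N}\alpha_N^{p+1}$. The correction from the projection $P_\varepsilon U_{\mu_1,\xi_1} - U_{\mu_1,\xi_1}$ — which by Lemma \ref{lem6.1} involves $\alpha_N\mu_1^{\frac{N-4}{2}}H(x,\xi_1)$ plus the $\varphi_1,\varphi_2$ terms near the hole — produces the terms $c_2 H(\xi_0,\xi_0)\mu_1^{N-4}$ (Robin function of the outermost bubble) and $c_3 \frac{\Delta U_{1,0}(\sigma_k)U_{1,0}(\sigma_k)}{\mu_k^{N-2}}$ (the innermost bubble feeling the hole boundary). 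The interaction integrals $\int U_i^p U_{i+1}$ between consecutive bubbles, after rescaling, produce $\Gamma(\sigma_l)(\mu_{l+1}/\mu_l)^{\frac{N-4}{2}}$; the precise choice $\theta = \frac{2k(N-2)}{2k(N-2)-2}$ in \eqref{e2.6} is exactly what makes all three families of terms — $H$-term, hole-term, and interaction-terms — appear at the same order $\varepsilon^{\frac{N-4}{2k}\theta}$, so I would track the powers of $\varepsilon$ carefully to confirm this balancing.

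\textbf{Key estimates and main obstacle.} I would use the change of variables $x = \xi_{i\varepsilon} + \mu_{i\varepsilon} z$ to reduce each bubble integral to an integral over a rescaled domain converging to $\mathbb{R}^N$ (or $\mathbb{R}^N \setminus\{0\}$ for the innermost bubble near the hole), extracting the constants $c_1, c_2, c_3, \Gamma$ in the limit, with error terms estimated using the technical lemmas in the Appendix and Lemma \ref{lem6.1} for the remainder $R$. The nearest-neighbour interaction requires the asymptotics $\mu_{(l+1)\varepsilon}/\mu_{l\varepsilon} \to 0$ so that $U_{\mu_{l+1,\varepsilon}}$ looks like a multiple of $|x-\xi_0|^{-(N-4)}$ on the scale of the $l$-th bubble, yielding the convolution-type constant $\Gamma(\sigma_l)$. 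The main obstacle is the $C^1$-uniformity claim: I would need to differentiate all the above expansions in $\mu$ and $\sigma$ and show the error terms remain $o(\varepsilon^{\frac{N-4}{2k}\theta})$ after differentiation — this is where the derivative estimates \eqref{e6.41}–\eqref{e6.42} on $R$ and the $C^1$-regularity of $\phi(\mu,\sigma)$ from Proposition \ref{pro3.1} are essential, and where one must be careful that differentiating in $\mu_i$ (which appears inside $\mu_{i\varepsilon} = \mu_i \varepsilon^{\frac{2i-1}{2k}\theta}$) does not lose powers of $\varepsilon$. Controlling these differentiated remainders, especially near the hole where the geometry is delicate, is the technically hardest point.
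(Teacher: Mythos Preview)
Your proposal is correct and follows essentially the same route as the paper: Part (i) is handled exactly by writing $V+\phi - i^*[f(V+\phi)] = \sum c_i^l P_\varepsilon Z_i^l$, differentiating $I$, and using the near-orthogonality relations (Lemma \ref{lem6.3}) together with $\langle P_\varepsilon Z_i^l,\partial_s\phi\rangle = -\langle \partial_s P_\varepsilon Z_i^l,\phi\rangle = o(1)$ to obtain an invertible linear system forcing all $c_i^l=0$; Part (ii) is done precisely via the decomposition $J_\varepsilon(V+\phi)=J_\varepsilon(V)+o(\varepsilon^{\frac{N-4}{2k}\theta})$ (Lemma \ref{lem4.2}), then $J_\varepsilon(V)=\sum_j J_\varepsilon(PU_j)+J_R$ where the diagonal part (Lemma \ref{lem6.5}) yields the constant, the Robin term and the hole term, while the cross term $J_R$ (Lemma \ref{thm4.3}) produces the $\Gamma(\sigma_l)$ interactions, with the $C^1$-uniformity checked by differentiating each piece and invoking \eqref{e6.41}--\eqref{e6.42}. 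One small imprecision: in your Part (ii) sketch the linear term $J_\varepsilon'(V)[\phi]$ does not literally vanish on $K^\perp$---rather it equals $\int\big(\sum(-1)^{j+1}f(U_j)-f(V)\big)\phi$, which is controlled by $\|R\|\cdot\|\phi\|$ via Lemma \ref{lem4.4}; this is what the paper does and is consistent with the rest of your outline.
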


Before giving the proof of Theorem \ref{thm1.1}, we first give the following lemma, which is used to prove the existence of non degenerate critical point of $I$.
\begin{lemma}\label{pro6.4}
There exists $\hat{\mu}\in \mathbb{R}_+^k$ such that $(\hat{\mu},0)$ is a non degenerate critical point of the function $\Phi$ introduced in (\ref{e4.2}).
\end{lemma}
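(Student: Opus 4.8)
The plan is to analyze the explicit function $\Phi$ in \eqref{e4.2} restricted to the slice $\sigma = 0$, and then to use the structure of $\Phi$ near that slice to upgrade a critical point of the restriction to a nondegenerate critical point of the full function. First I would compute $U_{1,0}(0) = \alpha_N$ and observe that $\sigma \mapsto \Gamma(\sigma)$ and $\sigma \mapsto \Delta U_{1,0}(\sigma)U_{1,0}(\sigma)$ attain a critical point at $\sigma = 0$ by radial symmetry: each $\sigma_l$ appears only through a radially symmetric function of $\sigma_l$, so $\nabla_{\sigma_l}\Phi(\mu,0) = 0$ automatically for every $\mu$. Hence it suffices to find $\hat\mu \in \mathbb{R}_+^k$ at which $\psi(\mu) := \Phi(\mu, 0)$ has a nondegenerate critical point in the $\mu$ variables, and separately to check that the mixed second derivatives $\partial^2_{\mu_i \sigma_l}\Phi(\hat\mu,0)$ vanish and that the $\sigma$-Hessian $\partial^2_{\sigma_l \sigma_m}\Phi(\hat\mu,0)$ is nondegenerate, so that the full Hessian is block diagonal and invertible.

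For the $\mu$-part, write $t_l = (\mu_{l+1}/\mu_l)^{(N-4)/2}$ and observe that
\begin{equation*}
\psi(\mu) = A\,\mu_1^{N-4} + B\,\mu_k^{-(N-2)} + 2\Gamma(0)\sum_{l=1}^{k-1}\left(\frac{\mu_{l+1}}{\mu_l}\right)^{\frac{N-4}{2}},
\end{equation*}
with $A = c_2 H(\xi_0,\xi_0) > 0$ (since $H(\xi_0,\xi_0) = R(\xi_0) > 0$ by the maximum principle) and $B = c_3\,\Delta U_{1,0}(0)\,\alpha_N$; one checks from $\Delta U_{1,0}(0) < 0$ and $c_3 < 0$ that $B > 0$, and $\Gamma(0) > 0$ is manifestly positive. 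Writing the critical point equations $\partial_{\mu_i}\psi = 0$ for $i = 1, \dots, k$ gives a triangular-looking algebraic system: the $\mu_1$-equation balances $A\mu_1^{N-4}$ against the first interaction term, the interior $\mu_l$-equations ($2 \le l \le k-1$) balance consecutive interaction terms, and the $\mu_k$-equation balances the last interaction term against $B\mu_k^{-(N-2)}$. This is exactly the kind of system solved in \cite{mupi,Musso,Ge} for the second-order analogue; it has a solution $\hat\mu$ with all components comparable to $1$, obtained by first solving for the ratios $\mu_{l+1}/\mu_l$ and then fixing the scale via the two endpoint equations. Nondegeneracy of this critical point is then checked by direct computation of the Hessian: after a logarithmic change of variables $s_l = \ln\mu_l$ the function becomes (up to the endpoint terms) a sum of exponentials in the differences $s_{l+1}-s_l$, whose Hessian is a tridiagonal matrix with a strictly dominant diagonal once the positive endpoint contributions are added, hence nonsingular.

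The main obstacle, and the step I would be most careful with, is the verification that $\sigma = 0$ is a \emph{nondegenerate} critical point in the $\sigma$ directions — radial symmetry gives criticality for free, but one needs $\partial^2_{\sigma_l}\big[\Gamma(\sigma_l)\big]\big|_{\sigma_l=0}$ and $\partial^2_{\sigma_k}\big[\Delta U_{1,0}(\sigma_k)U_{1,0}(\sigma_k)\big]\big|_{\sigma_k=0}$ to be definite (each being a multiple of the identity on $\mathbb{R}^N$ by symmetry). For $\Gamma$ one differentiates under the integral sign in \eqref{e4.20}: $\Gamma$ has a strict interior extremum at $0$ because the integrand is, for each fixed $y$, a strictly convex-or-concave function of $x$ near $x=0$ with the sign not changing after integration — concretely, $\Gamma(x)$ is a smooth radial function that tends to $0$ as $|x|\to\infty$ and is positive, so it has a strict interior maximum, and one must confirm this maximum is at the origin (which follows from the symmetrization/rearrangement inequality, or from a direct second-derivative computation showing $\Delta\Gamma(0) \ne 0$). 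A parallel computation handles the $\Delta U_{1,0}\cdot U_{1,0}$ term. Once both $\sigma$-Hessian blocks are shown nondegenerate and the cross terms $\partial^2_{\mu_i\sigma_l}\Phi(\hat\mu,0)$ are seen to vanish (because $\partial_{\sigma_l}\Phi \equiv 0$ on the slice $\sigma=0$, differentiating in $\mu$ preserves this), the full Hessian of $\Phi$ at $(\hat\mu, 0)$ is block diagonal with invertible blocks, which is precisely the claim.
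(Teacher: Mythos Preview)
Your proposal is correct and follows the same overall strategy as the paper: use radial symmetry to see that $\nabla_\sigma\Phi(\mu,0)=0$ for all $\mu$, deduce that the Hessian at $(\hat\mu,0)$ is block-diagonal (the mixed $\mu$--$\sigma$ block vanishes), and then verify nondegeneracy of the $\mu$-block and the $\sigma$-block separately. The differences are in execution. For the $\mu$-block, the paper substitutes $\nu_i=\mu_i^{(N-4)/2}$, writes down the tridiagonal Hessian at the critical point using the relations $2H_1\nu_1^2=g(0)\nu_2/\nu_1=\cdots=\lambda$, and computes its determinant explicitly; your logarithmic change $s_l=\ln\mu_l$ is a nice alternative, but note that in those variables the interior rows of the interaction Hessian satisfy diagonal $=$ sum of absolute off-diagonals (only \emph{weak} dominance), so you must invoke the irreducibly-diagonally-dominant criterion (the endpoint rows $1$ and $k$ are strictly dominant and the tridiagonal matrix is irreducible), not strict dominance alone. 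For the $\sigma$-block, the paper cites \cite[Lemma~4.1]{Ge} for the nondegeneracy of $\Gamma$ at $0$ and computes $\partial_{x_i}^2\big(\Delta U_{1,0}\,U_{1,0}\big)\big|_{x=0}=\alpha_N^2(N-4)(2N^2-6N-4)\neq0$ directly; your outline via rearrangement or direct second-derivative computation is equivalent but would need to be filled in with those explicit values.
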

\begin{proof}
For simplicity, we replace $\mu_i^{\frac{N-4}{2}}$ by $\nu_i$ and let
\begin{align*}
H_1:= c_2H(\xi_0,\xi_0),\ f(\sigma_k):=-\frac{3(N-2)}{2}meas(\mathbb{S}^{N-1})\frac{\Delta U_{1,0}(\sigma_k)U_{1,0}(\sigma_k)}{\alpha_N^{p+1}},\ g(\sigma_j):= 2\Gamma(\sigma_j).
\end{align*}
Then we can write  $\Phi=H_1\nu_1^2+\frac{f(\sigma_k)}{\nu_k^\frac{2(N-2)}{N-4}}+g(\sigma_1)\frac{\nu_2}{\nu_1}+g(\sigma_2)\frac{\nu_3}{\nu_2}+\ldots
+g(\sigma_{k-1})\frac{\nu_k}{\nu_{k-1}}.$
\par If $\sigma=0$, the quadratic form $\nu\mapsto H_1\nu_1^2$ is strictly positively definite, then $\Phi(\nu,0)$  has a minimum point $(\hat{\nu},0)$. Next, we claim that $(\hat{\nu},0)$ is a non degenerate critical point of $\Phi(\nu,\sigma)$.
Since
\begin{align*}
\mathscr{H}\Phi(\hat{\nu},0)=
\left(
  \begin{array}{cc}
    \mathscr{H}\Phi_\nu(\hat{\nu},0) & 0 \\
    0 & \mathscr{H}\Phi_{\sigma}(\hat{\nu},0) \\
  \end{array}
\right)
\end{align*}
it remains to prove $det(\mathscr{H}\Phi_\nu(\hat{\nu},0))\neq 0$ and $det(\mathscr{H}\Phi_{\sigma}(\hat{\nu},0))\neq 0$.

In fact, from \cite[Lemma 4.1]{Ge}, we have that $x=0$ is a non degenerate critical point of $g(x)$.
Since
\begin{align*}
&\frac{\partial^2(\Delta U_{1,0}(x)U_{1,0}(x))}{\partial x_i \partial x_j} \big|_{x=0}=0,\ \ \mbox{for}\ i\neq j;\\
&\frac{\partial^2(\Delta U_{1,0}(x)U_{1,0}(x))}{\partial  x_i^2 } \big|_{x=0}=\alpha_N^2(N-4)(2N^2-6N-4)\neq 0.
\end{align*}
We obtain $x=0$ is a non degenerate critical point of $f(x)$, namely, $det\big(\mathscr{H}\Phi_\sigma(\hat{\nu},0)\big)\neq 0.$

From $\nabla \Phi(\nu,\sigma)=0$, we have
\begin{align*}
&\frac{\partial \Phi}{\partial \nu_1}=2H_1\nu_1-g(\sigma_1)\frac{\nu_2}{\nu_1^2}=0;\\
&\frac{\partial \Phi}{\partial \nu_i}=\frac{g(\sigma_{i-1})}{\nu_{i-1}}-g(\sigma_i)\frac{\nu_{i+1}}{\nu_{i}^2}=0 \quad \text{for} \ i=2,...,k-1;\\
&\frac{\partial \Phi}{\partial \nu_k}=-\frac{2(N-2)}{N-4}\frac{f(\sigma_k)}{\nu_k^\frac{3N-8}{N-4}}+\frac{g(\sigma_{k-1})}{\nu_{k-1}}=0.
\end{align*}
Thus $2H_1\nu_1^2=g(\sigma_1)\frac{\nu_2}{\nu_1}=\cdot\cdot\cdot=g(\sigma_{k-1})\frac{\nu_{k}}{\nu_{k-1}}=\frac{2(N-2)}{N-4}\frac{f(\sigma_k)}
{\nu_k^\frac{2N-4}{N-4}}:= \lambda$.
This together with the fact that $\Phi({\nu},0)$ has a minimum point $(\hat{\nu},0)$, we find
\begin{align*}
\mathscr{H}\Phi_{{\nu}}(\hat{\nu},0)
&=\left(
  \begin{array}{cccccc}
2H_1+\frac{2g(0)\hat{\nu}_2}{\hat{\nu}_1^3}  &  -\frac{g(0)}{\hat{\nu}_1^2} & 0 & \cdots\ &0 &0\\
-\frac{g(0)}{\hat{\nu}_1^2}  &  \frac{2g(0)\hat{\nu}_3}{\hat{\nu}_2^3} & -\frac{g(0)}{\hat{\nu}_2^2} & \cdots\ & 0 & 0\\
0 & -\frac{g(0)}{\hat{\nu}_2^2}  & \frac{2g(0)\hat{\nu}_4}{\hat{\nu}_3^3} &\cdots\ & 0 & 0\\
\vdots  & \vdots & \vdots & \ddots & \vdots& \vdots\\
0 & 0  & 0 &\cdots\ & \frac{2g(0)\hat{\nu}_k}{\hat{\nu}_{k-1}^3} & -\frac{g(0)}{\hat{\nu}_{k-1}^2}\\
0 & 0  & 0 &\cdots\ & -\frac{g(0)}{\hat{\nu}_{k-1}^2} & \frac{2(N-2)(3N-8)f(0)}{(N-4)^2\hat{\nu}_k^\frac{4(N-3)}{N-4}}\\
\end{array}
\right)_{k\times k}
\nonumber\\
&\rightarrow
\left(
  \begin{array}{cccccc}
3\lambda  &  -g(0) & 0 & \cdots\ &0 &0\\
-\frac{\lambda^2}{g(0)}  &  2\lambda & -g(0) & \cdots\ & 0 & 0\\
0 & -\frac{\lambda^2}{g(0)}  & 2\lambda &\cdots\ & 0 & 0\\
\vdots  & \vdots & \vdots & \ddots& \vdots& \vdots \\
0 & 0  & 0 &\cdots\ & 2\lambda & -g(0)\\
0 & 0  & 0 &\cdots\ & -\frac{\lambda^2}{g(0)} & \frac{2(N-2)(3N-8)f(0)}{(N-4)^2\hat{\nu}_k^\frac{2N-4}{N-4}}\\
\end{array}
\right)_{k\times k}
:=Q.
\end{align*}
By calculations, we can get $det(Q)=\frac{2f(0)(N-2)(3N-8)}{(N-4)^2\hat{\nu}_k^\frac{2N-4}{N-4}}(2k-1)\lambda^{k-1}-(2k-3)\lambda^k
=\frac{4Nk-8k-4}{N-4}\lambda^{k}\neq 0$, it follows that $det\big(\mathscr{H}\Phi_{{\nu}}(\hat{\nu},0)\big)\neq 0$.
That proves our claim.
\end{proof}

\noindent{\it Proof of Theorem \ref{thm1.1}:}
In order to prove $V+\phi$ is the solution of problem (\ref{e1.1}), we need to find a critical point of the function $I$. From Proposition \ref{pro3.4}, we see that finding a critical point of $I$ is equivalent to that of $\Phi$.

By Lemma \ref{pro6.4}, there exists $\hat{\mu}\in \mathbb{R}_+^k$
such that $(\hat{\mu},0)$ is a non degenerate critical point of the function $\Phi$ defined in (\ref{e4.2}), which is stable with respect to $C^1$-perturbation.
Therefore, we deduce that the function $I$ has a critical point, denoted by $(\mu_\varepsilon,\sigma_\varepsilon)$, which satisfies that $\mu_\varepsilon\rightarrow \hat{\mu}$ and $\sigma_\varepsilon\rightarrow 0$ as $\varepsilon\rightarrow 0$.
\qed

\section{The finite dimensional reduction}\label{redu}

This section is devoted to the proof of Proposition \ref{pro3.1}. Let us introduce the linear operator $L: K^\perp \rightarrow K^ \perp$ defined by
\begin{align}\label{e3.1}
L(\phi):=\Pi^\perp \left\{\phi-i^*[f'(V)\phi]\right\}.
\end{align}
We first study the invertibility of the operator $L$.

\begin{lemma}\label{lem3.2}
For any $d>0$ small but fixed, there exist $\varepsilon_0>0$ and $c>0$ such that for any $\mu\in\mathbb{R}^k_{+}$ and $\sigma\in\mathbb{R}^{Nk}$ satisfying (\ref{e2.8}) and for any $\varepsilon\in (0, \varepsilon_0)$, we have
\begin{align*}
\| L(\phi)\| \geq c\|\phi\| \quad  \text{ for any } \phi\in K^\perp.
\end{align*}
\end{lemma}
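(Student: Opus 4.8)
The strategy is the classical contradiction argument of Lyapunov--Schmidt linear theory. Assume the estimate fails: there exist $\varepsilon_n\to0$, base parameters $\mu_n\in\mathbb R^k_+$, $\sigma_n\in\mathbb R^{Nk}$ satisfying (\ref{e2.8}), and $\phi_n\in K^\perp$ with $\|\phi_n\|=1$ and $\|L(\phi_n)\|\to0$. Abbreviate $V_n=V$, $\mu_{jn}=\mu_{j\varepsilon_n}$, $\xi_{jn}=\xi_{j\varepsilon_n}$, $Z^i_{jn}=P_{\varepsilon_n}Z^i_{\mu_{jn},\xi_{jn}}$ and $h_n=L(\phi_n)$. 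Since $\Pi\phi_n=0$, the definition (\ref{e3.1}) of $L$ gives, for suitable scalars $c_n^{ij}$,
$$\phi_n-i^*\!\left[f'(V_n)\phi_n\right]=h_n+\sum_{j=1}^{k}\sum_{i=0}^{N}c_n^{ij}\,Z^i_{jn},$$
equivalently, in weak form, $\langle\phi_n,\psi\rangle-\int_{\Omega_{\varepsilon_n}}f'(V_n)\phi_n\psi=\langle h_n,\psi\rangle+\sum_{i,j}c_n^{ij}\langle Z^i_{jn},\psi\rangle$ for all $\psi\in H^2(\Omega_{\varepsilon_n})\cap H_0^1(\Omega_{\varepsilon_n})$.

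First I would show $c_n^{ij}\to0$. Testing the identity with $Z^{i'}_{j'n}$ and using: $\langle\phi_n,Z^{i'}_{j'n}\rangle=0$ because $\phi_n\in K^\perp$; the computation $\langle Z^i_{jn},Z^{i'}_{j'n}\rangle=\delta_{ii'}\delta_{jj'}\gamma_i+o(1)$ with fixed $\gamma_i>0$ (the cross terms between bubbles at different scales, and between different indices of the same bubble, are $o(1)$); the bound $\big|\int_{\Omega_{\varepsilon_n}}f'(V_n)\phi_n Z^{i'}_{j'n}\big|=o(1)$ obtained from Hölder together with $|f'(V_n)|\le C\sum_j U_{\mu_{jn},\xi_{jn}}^{p-1}$, the fast decay of the $Z^{i'}$'s and (\ref{e2.1}); and $|\langle h_n,Z^{i'}_{j'n}\rangle|\le\|h_n\|\,\|Z^{i'}_{j'n}\|\to0$, one finds that $(c_n^{ij})$ solves an almost diagonal linear system with vanishing right-hand side, so $c_n^{ij}\to0$ for all $i,j$.

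The core is the blow-up analysis. Up to a subsequence, $\mu_{jn}\varepsilon_n^{-(2j-1)\theta/(2k)}\to\bar\mu_j\in[d,1/d]$ and $\sigma_{jn}\to\bar\sigma_j$. For each fixed $j$ set $\widetilde\phi_n^j(y):=\mu_{jn}^{(N-4)/2}\phi_n(\mu_{jn}y+\xi_{jn})$, which is isometric for the $D^{2,2}$--norm, so $\{\widetilde\phi_n^j\}$ is bounded in $D^{2,2}(\mathbb R^N)$ and, along a further subsequence, $\widetilde\phi_n^j\rightharpoonup\widetilde\phi^j$. The rescaled domains $\mu_{jn}^{-1}(\Omega_{\varepsilon_n}-\xi_{jn})$ exhaust $\mathbb R^N$ minus a point: the outer boundary escapes to infinity since $\mathrm{dist}(\xi_{jn},\partial\Omega)\to\mathrm{dist}(\xi_0,\partial\Omega)>0$ while $\mu_{jn}\to0$, and the rescaled hole $B(-\sigma_{jn},\varepsilon_n/\mu_{jn})$ shrinks to a point because $\varepsilon_n/\mu_{jn}\to0$, which is precisely the inequality $(2j-1)\theta/(2k)<1$, i.e. $N>4$. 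Plugging rescaled test functions $\psi\in C^\infty_c(\mathbb R^N)$ into the weak identity and using $h_n\to0$ strongly, $c_n^{ij}\to0$, the convergence of the rescaled $Z^i_{jn}$ to $Z^i_{1,0}$ (which follows from Lemma \ref{lem6.1}, namely from the sharp control of $P_\varepsilon U_{\mu,\xi}-U_{\mu,\xi}$ and of its $\mu$-- and $\sigma$--derivatives near the hole, hence of $P_\varepsilon Z^i-Z^i$), and the local strong convergence of $U_{1,0}^{p-1}\widetilde\phi^j_n$, one gets that $\widetilde\phi^j$ is a bounded solution of $\Delta^2\nu=f'(U_{1,0})\nu$ on $\mathbb R^N$ (the removed point has zero $H^2$--capacity for $N\ge5$, so it is negligible). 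By the classification recalled from \cite{Lu}, $\widetilde\phi^j\in\mathrm{span}\{Z^0_{1,0},\dots,Z^N_{1,0}\}$. Rewriting $\langle\phi_n,Z^i_{jn}\rangle=0$ as $\int_{\Omega_{\varepsilon_n}}\phi_n f'(U_{\mu_{jn},\xi_{jn}})Z^i_{\mu_{jn},\xi_{jn}}=0$ (integrate by parts; boundary terms vanish) and rescaling, one obtains in the limit $\int_{\mathbb R^N}f'(U_{1,0})\widetilde\phi^j Z^i_{1,0}=0$, i.e. $\widetilde\phi^j\perp Z^i_{1,0}$ for $i=0,\dots,N$. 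Hence $\widetilde\phi^j\equiv0$ for every $j$.

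Finally I would convert this into a contradiction. Testing the weak identity with $\phi_n$, the $K$--part drops out since $\phi_n\in K^\perp$, and $|\langle h_n,\phi_n\rangle|\le\|h_n\|\to0$, so
$$1=\|\phi_n\|^2=\int_{\Omega_{\varepsilon_n}}f'(V_n)\phi_n^2+o(1).$$
Splitting $\Omega_{\varepsilon_n}$ into balls around the $\xi_{jn}$'s of radius $\mu_{jn}R$ and their complement, using $|f'(V_n)|\le C\sum_j U_{\mu_{jn},\xi_{jn}}^{p-1}$, that $f'(V_n)=o(1)$ uniformly off the concentration points, and that on each bubble $\int_{B(\xi_{jn},\mu_{jn}R)}U_{\mu_{jn},\xi_{jn}}^{p-1}\phi_n^2=\int_{B_R}U_{1,0}^{p-1}(\widetilde\phi_n^j)^2\to0$ by $\widetilde\phi_n^j\rightharpoonup0$ and the compactness of $D^{2,2}(\mathbb R^N)\hookrightarrow L^2(U_{1,0}^{p-1}\,dy)$ (the weight decays like $|y|^{-8}$, hence lies in $L^{N/4}$, which suffices), we conclude $\int_{\Omega_{\varepsilon_n}}f'(V_n)\phi_n^2\to0$, whence $1=o(1)$, a contradiction. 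I expect the genuine difficulty to be the blow-up step for the innermost bubble $j=1$, whose concentration scale $\mu_{1n}$ is only polynomially larger than the size $\varepsilon_n$ of the hole: there one really needs the fine boundary estimates of Lemma \ref{lem6.1} (equivalently \cite[Proposition 2.1]{Ala}) to ensure that, after rescaling, the projection corrections $Z^i_{1n}-Z^i_{1,0}$ and the equation remainder vanish in the limit and that the removed ball absorbs no mass.
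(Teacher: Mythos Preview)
Your overall strategy---contradiction, kill the $K$-coefficients, blow up at each scale, identify the weak limit as a kernel element orthogonal to the kernel, then contradict $1=\int f'(V_n)\phi_n^2+o(1)$---is exactly the paper's. But your first step contains a normalization error that propagates. With $Z^i_{jn}=P_{\varepsilon_n}Z^i_{\mu_{jn},\xi_{jn}}$ (the paper's unrescaled convention), Lemma~\ref{lem6.3} gives $\langle Z^i_{jn},Z^{i'}_{j'n}\rangle=\delta_{ii'}\delta_{jj'}\,c_i\,\mu_{jn}^{-2}(1+o(1))$, not $\gamma_i\delta+o(1)$: the diagonal blows up. Likewise a bare H\"older bound yields only $\big|\int f'(V_n)\phi_n Z^{i'}_{j'n}\big|=O(\mu_{j'n}^{-1})$; the paper sharpens this to $o(\mu_{j'n}^{-1})$ by splitting $PZ=(PZ-Z)+Z$ and replacing $f'(V_n)$ by $pU_{j'n}^{p-1}$ (see (\ref{e3.11})). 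Note also that $\langle h_n,Z^{i'}_{j'n}\rangle=0$ exactly, since $h_n=L(\phi_n)\in K^\perp$, not merely $\le\|h_n\|\,\|Z^{i'}_{j'n}\|$ (which diverges). The correct output is $c_n^{ij}=o(\mu_{jn})$, equivalently $\|\omega_n\|\to0$; this is precisely what your blow-up step needs, because testing the rescaled weak identity against a fixed $\varphi\in C_c^\infty$ produces a contribution $c_n^{ij}\mu_{jn}^{-1}\int pU_{1,0}^{p-1}Z^i_{1,0}\varphi$ from the $j$-th kernel element, and $c_n^{ij}\to0$ alone does not kill it. The paper sidesteps this by passing from $\phi_n$ to $u_n:=\phi_n-h_n-\omega_n$, which solves a cleaner equation (\ref{e3.14}) with no $K$-component on the right.

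Two smaller points. The claim ``$f'(V_n)=o(1)$ uniformly off the concentration points'' is false pointwise (each $U_j^{p-1}$ is of order $\mu_{jn}^{-4}$ near $\xi_0$); what does work is H\"older against the $L^{N/4}$ tail of $U_j^{p-1}$, and the paper organizes this via the annular decomposition $B(\xi_0,r)\setminus B(\xi_0,\varepsilon)=\bigcup_lA_{n,l}$ (see (\ref{e3.19})) adapted to the tower, together with annular cut-offs $\chi_{n,j}$ and compactness of $D^{2,2}\hookrightarrow L^2_{\mathrm{loc}}$ on each rescaled shell---a more robust bookkeeping for nested scales than your ball-splitting. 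Finally, the bubble closest to the hole is $j=k$ (smallest scale $\mu_{k\varepsilon}\sim\varepsilon^{(2k-1)\theta/(2k)}$), not $j=1$; the delicate ratio is $\varepsilon/\mu_{kn}\to0$, which is exactly $N>4$.
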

\begin{proof}
We argue by contradiction. Assume there exist $d>0$ and a sequence $\varepsilon_n\rightarrow 0$,  $\sigma_n \in {\mathbb{R}}^{Nk} $ and
$\mu_n \in {\mathbb{R}}^k_{+}$, with $\sigma_{n,i}\rightarrow \sigma_i\in{\mathbb{R}}^{N}$ and $\mu_{n,i}\rightarrow \mu_{i}>0$ for $i=1,\cdots,k$,  $\phi_n$, $\varsigma_n \in K^\perp$ such that
\begin{align}\label{e3.6}
 L(\phi_n)=\varsigma_n \text{ in }\Omega_{\varepsilon_n}\ \ \mbox{with}\
  \| \varsigma_n\|\rightarrow 0, \ \ \mbox{and}\
  \| \phi_n\|=1 \text{ as }n\rightarrow \infty.
\end{align}
By (\ref{e3.6}), there exists $\omega_n \in K$ such that
\begin{align}\label{e3.7}
\phi_n-i^*[f'(V)\phi_n]=\varsigma_n+\omega_n.
\end{align}

{\it Step 1}. We claim that
\begin{align}\label{e3.8}
\| \omega_n\|\rightarrow 0\ \text{as}\ n\rightarrow \infty.
\end{align}
In fact, Let $\omega_n=\sum_{\stackrel{j=0,\cdots,N,}{i=1,\cdots,k}}b_n^{ij}PZ_i^j$,
where $PZ_i^j:=P_{\varepsilon}Z_{\mu_{n,i},\xi_{n,i}}^j$.
For $l=1,2,\cdots,k$ and $h=0,1,\cdots,N$, we multiply (\ref{e3.7}) by $PZ_l^h$, and taking
into account that  $\phi_n, \varsigma_n \in K^\perp$, we get
\begin{align}\label{e3.9}
\langle \omega_n, PZ_l^h \rangle=-\int_{\Omega_{\varepsilon_n}}f'(V)\phi_nPZ_l^h dx.
\end{align}
From Lemma \ref{lem6.3}, we have
\begin{align}\label{e3.10}
\langle \omega_n, PZ_l^h \rangle=b_n^{lh}\left[c_h\left(\frac{1}{{\mu_{n,l}^{2}}}\right)+o\left(\frac{1}{{\mu_{n,l}^{2}}}\right)\right]
+o\left(\frac{1}{{\mu_{n,l}^{2}}}\right)\Big[\sum_{\stackrel{j=0,\cdots,N,}{j\neq h}}b_n^{lj}+\sum_{\stackrel{j=0,\cdots,N,}{i=1,\cdots,k,i\neq l}}b_n^{ij}\Big].
\end{align}
Since $\langle PZ_l^h, \phi_n \rangle=0$ for $\phi_n\in K^\perp$, we obtain
\begin{align}\label{e3.11}
\int_{\Omega_{\varepsilon_n}}f'(V)\phi_nPZ_l^h=&\int_{\Omega_{\varepsilon_n}}\bigg(f'(V)\phi_n(PZ_l^h-Z_l^h)+\left[f'(V)-pU_l^{p-1}\right]\phi_nZ_l^h
+pU_l^{p-1}\phi_nZ_l^h\bigg)
\nonumber\\
\leq &|f'(V)|_{\frac{N}{4}}|\phi_n|_{\frac{2N}{N-4}}|PZ_l^h-Z_l^h|_{\frac{2N}{N-4}}
+|f'(V)-pU_l^{p-1}|_{\frac{N}{4}}|\phi_n|_{\frac{2N}{N-4}}|Z_l^h|_{\frac{2N}{N-4}}
\nonumber\\
=&o\left(\frac{1}{{\mu_{n,l}}}\right).
\end{align}
By (\ref{e3.9})-(\ref{e3.11}), we get (\ref{e3.8}).

{\it Step 2}. Let us define
\begin{align*}
u_n:=\phi_n-\varsigma_n-\omega_n.
\end{align*}
We claim that
\begin{align}\label{e3.12}
\liminf_{n\rightarrow \infty}\int_{\Omega_{\varepsilon_n}}f'(V)u_n^2=c^2>0.
\end{align}
By (\ref{e3.7}), we get $u_n=i^*[f'(V)\phi_n]=i^*[f'(V)(u_n+\varsigma_n+\omega_n)]$,
then
\begin{align}\label{e3.14}
\left\{
\begin{array}{rcl}
&\Delta^2u_n=f'(V)u_n+f'(V)(\varsigma_n+\omega_n) \text{ in } \Omega_{\varepsilon_n},\\
&u_n=\Delta u_n=0 \text{ on } \partial \Omega_{\varepsilon_n}.
\end{array} \right.
\end{align}
We multiply the first equation in (\ref{e3.14}) by $u_n$ and integrate in $\Omega_{\varepsilon_n}$, we obtain
\begin{align}\label{e3.15}
\|u_n\|^2=\int_{\Omega_{\varepsilon_n}}f'(V)u_n^2+\int_{\Omega_{\varepsilon_n}}f'(V)(\varsigma_n+\omega_n)u_n.
\end{align}
Since $\|\phi_n\|\rightarrow 1$, $\|\varsigma_n\|\rightarrow 0$ and $\|\omega_n\|\rightarrow 0$, we see that $\|u_n\|\rightarrow 1$ and
\begin{align}\label{e3.16}
\left|\int_{\Omega_{\varepsilon_n}}f'(V)(\varsigma_n+\omega_n)u_n\right|\leq & |f'(V)|_{\frac{N}{4}}|\varsigma_n+\omega_n|_{\frac{2N}{N-4}}|u_n|_{\frac{2N}{N-4}}
\leq C\|\varsigma_n+\omega_n\|=o(1)\|u_n\|.
\end{align}
Then (\ref{e3.12}) follows from (\ref{e3.15}) and (\ref{e3.16}).

{\it Step 3}. We set
\begin{align}\label{e3.19}
A_{n,l}:= B(\xi_0,\sqrt{\mu_{n,l}\mu_{n,l-1}})\backslash B(\xi_0,\sqrt{\mu_{n,l}\mu_{n,l+1}})
\end{align}
with $\mu_{n,0}\mu_{n,1}=r^2$ for some $r>0$ and $\mu_{n,k}\mu_{n,k+1}=\varepsilon^2$.

For any $j=1,2\cdots k$, let $\chi_{n,j}$  be a smooth cut-off function such that
\begin{align}\label{e3.18}
\begin{cases}\chi_{n,j}(x)=1, \text{ if }\sqrt{\mu_{n,j}\mu_{n,j+1}}\leq|x-\xi_0|\leq \sqrt{\mu_{n,j}\mu_{n,j-1}},\\
  \chi_{n,j}(x)=0, \text{ if }|x-\xi_0|\leq\frac{\sqrt{\mu_{n,j}\mu_{n,j+1}}}{2} \text{ or } |x-\xi_0|\geq 2\sqrt{\mu_{n,j}\mu_{n,j-1}},\\
  |\nabla \chi_{n,j}(x)|\leq C_1 \text{ and }|\Delta \chi_{n,j}(x)|\leq \frac{C_2}{\sqrt{\mu_{n,j}\mu_{n,j-1}}},\quad with \quad C_1>0, \quad  C_2 >0 .\end{cases}
\end{align}

We define $\check{u}_{n,j}(y):={\mu}_{n,j}^{\frac{N-4}{2}}{u}_{n}({\mu}_{n,j}y+\xi_0)\chi_{n,j}({\mu}_{n,j}y+\xi_0)$.
We will show that, for any $j=1,2\cdots k$,
\begin{align}\label{e3.17}
\check{u}_{n,j}\rightharpoonup 0 \ \text{ in} \ D^{2,2}(\mathbb{R}^N) \text{ and }
  \check{u}_{n,j}\rightarrow 0 \ \text{ in} \ L^q_{loc}(\mathbb{R}^N) \ \text{for} \ q  \in [2,2^*). &
\end{align}
Let $x={\mu}_{n,j}y+\xi_0$, then
\begin{align}\label{e3.20}
\nabla \check{u}_{n,j}(y)={\mu}_{n,j}^{\frac{N-2}{2}}\left[\nabla {u}_{n}(x)\chi_{n,j}(x)+{u}_{n}(x)\nabla\chi_{n,j}(x)\right],
\end{align}
\begin{align}\label{e3.21}
\Delta \check{u}_{n,j}(y)={\mu}_{n,j}^{\frac{N}{2}}\left[\Delta {u}_{n}(x)\chi_{n,j}(x)+2\nabla{u}_{n}(x)\nabla\chi_{n,j}(x)+{u}_{n}(x)\Delta\chi_{n,j}(x)\right],
\end{align}
and
\begin{align}\label{e3.23}
\Delta^2\check{u}_{n,j}(y)=&{\mu}_{n,j}^{\frac{N+4}{2}}\bigg[\Delta^2{u}_{n}(x)\chi_{n,j}(x)+4\nabla \Delta{u}_{n}(x)\nabla \chi_{n,j}(x)+6\Delta{u}_{n}(x)\Delta \chi_{n,j}(x)
\nonumber\\
&\qquad\quad+4\nabla{u}_{n}(x)\nabla \Delta \chi_{n,j}(x)+{u}_{n}(x)\Delta^2 \chi_{n,j}(x)\bigg].
\end{align}
Then from (\ref{e3.18}), (\ref{e3.21}) and $\|{u}_{n}\|\rightarrow 1$, we have
\begin{align*}
\int_{\mathbb{R}^N} |\Delta \check{u}_{n,j}|^{2}&\leq \mu_{n,j}^{N}\int_{\frac{\sqrt{\mu_{n,j}\mu_{n,j+1}}}{2}\leq|x-\xi_0|\leq 2\sqrt{\mu_{n,j}\mu_{n,j-1}}}|\Delta u_n|^2+u_{n,j}^2(\Delta\chi_{n,j})^2+2|\nabla u_n\nabla \chi_{n,j}|^2
\leq C.
\end{align*}
Thus up to a subsequence,  $\check{u}_{n,j}\to \check{u}_{j}$ weakly in $D^{2,2}(\mathbb{R}^N)$ and strongly in $L^q_{loc}(\mathbb{R}^N)$ for any $q \in [2,2^*)$.

We claim that $\check{u}_{j}$ solves the problem
\begin{align}\label{e3.27}
\Delta^2\check{u}_{j}=f'(U_{1,\sigma_j})\check{u}_{j} \quad \text{in} \ \mathbb{R}^N.
\end{align}
and satisfies the orthogonality conditions
\begin{align}\label{e3.28}
\int_{\mathbb{R}^N}\Delta Z^h_{1,\sigma_j}\Delta \check{u}_{j}=0,\quad h=0,1,\cdots,N.
\end{align}
These two facts imply that $\check{u}_{j}=0$, namely, (\ref{e3.17}) holds.

We are thus led to prove (\ref{e3.27}) and (\ref{e3.28}). We start with (\ref{e3.27}).

By (\ref{e3.23}) and (\ref{e3.14}), we get
\begin{align*}
\Delta^2\check{u}_{j}(y)
=&{\mu}_{n,j}^{\frac{N+4}{2}}\bigg(f'(V({\mu}_{n,j}y+\xi_0)){u}_{n}({\mu}_{n,j}y+\xi_0)\chi_{n,j}({\mu}_{n,j}y+\xi_0)
\nonumber\\
&+f'(V({\mu}_{n,j}y+\xi_0))(\varsigma_n({\mu}_{n,j}y+\xi_0)+\omega_n({\mu}_{n,j}y+\xi_0))\chi_{n,j}({\mu}_{n,j}y+\xi_0)
\nonumber\\
&+4\nabla \Delta{u}_{n}({\mu}_{n,j}y+\xi_0)\nabla \chi_{n,j}({\mu}_{n,j}y+\xi_0)+6\Delta{u}_{n}({\mu}_{n,j}y+\xi_0)\Delta \chi_{n,j}({\mu}_{n,j}y+\xi_0)
\nonumber\\
&+4\nabla{u}_{n}({\mu}_{n,j}y+\xi_0)\nabla \Delta \chi_{n,j}({\mu}_{n,j}y+\xi_0)+{u}_{n}({\mu}_{n,j}y+\xi_0)\Delta^2 \chi_{n,j}({\mu}_{n,j}y+\xi_0)\bigg).
\end{align*}
Then, for any $\varphi\in C_0^{\infty}(\mathbb{R}^N)$, we have
\begin{align}\label{e3.24}
&\int_{\mathbb{R}^N}\Delta \check{u}_{n,j}(y) \Delta \varphi(y)
={\mu}_{n,j}^4\int_{\mathbb{R}^N}f'(V({\mu}_{n,j}y+\xi_0))\check{u}_{n,j}(y)\varphi(y)
\nonumber\\
&+{\mu}_{n,j}^{\frac{N+4}{2}}\int_{\mathbb{R}^N}f'(V({\mu}_{n,j}y+\xi_0))(\varsigma_n({\mu}_{n,j}y+\xi_0)+\omega_n({\mu}_{n,j}y+\xi_0))\chi_{n,j}({\mu}_{n,j}y+\xi_0)\varphi(y)
\nonumber\\
&+4{\mu}_{n,j}^{\frac{N+4}{2}}\int_{\mathbb{R}^N}\nabla \Delta{u}_{n}({\mu}_{n,j}y+\xi_0)\nabla \chi_{n,j}({\mu}_{n,j}y+\xi_0)\varphi(y)
\nonumber\\
&+6{\mu}_{n,j}^{\frac{N+4}{2}}\int_{\mathbb{R}^N}\Delta{u}_{n}({\mu}_{n,j}y+\xi_0)\Delta \chi_{n,j}({\mu}_{n,j}y+\xi_0)\varphi(y)
\nonumber\\
&+4{\mu}_{n,j}^{\frac{N+4}{2}}\int_{\mathbb{R}^N}\nabla{u}_{n}({\mu}_{n,j}y+\xi_0)\nabla \Delta \chi_{n,j}({\mu}_{n,j}y+\xi_0)\varphi(y)
\nonumber\\
&+{\mu}_{n,j}^{\frac{N+4}{2}}\int_{\mathbb{R}^N}{u}_{n}({\mu}_{n,j}y+\xi_0)\Delta^2 \chi_{n,j}({\mu}_{n,j}y+\xi_0)\varphi(y)
:=L_1+L_2+L_3+L_4+L_5+L_6.
\end{align}
If $\frac{\sqrt{\mu_{n,j}\mu_{n,j+1}}}{2} \leq|\mu_{n,j}y|\leq 2{\sqrt{\mu_{n,j}\mu_{n,j-1}}}$, we get
\begin{align}\label{e3.25}
f'(V({\mu}_{n,j}y+\xi_0))=f'\Big(\frac{1}{{\mu}_{n,j}^{\frac{N-4}{2}}}U_{1,0}(y-\sigma_j)+\sum_{\stackrel{i=1\cdots k}{i\neq j}}U_{n,i}({\mu}_{n,j}y+\xi_0)+o(1)\Big),
\end{align}
and
\begin{align}\label{e3.26}
U_{n,i}({\mu}_{n,j}y+\xi_0)=
\begin{cases}O\left(\frac{1}{{\mu}_{n,i}^{\frac{N-4}{2}}}\right), \qquad\qquad\quad\ \text{ if }i<j,\\[5mm]
  O\left(\frac{{\mu}_{n,i}^{\frac{N-4}{2}}}{{\mu}_{n,j}^{N-4}}\frac{1}{|y|^{N-4}}\right), \qquad\quad\text{ if }i>j.\end{cases}
\end{align}
By $\text{Lebesgue's}$ dominated convergence Theorem, we get
\begin{align*}
L_1\rightarrow \int_{\mathbb{R}^N}f'(U_{1,0}(y-\sigma_j))\check{u}_{j}(y)\varphi(y).
\end{align*}
Using H\"{o}lder inequality, we have
\begin{equation*}
|L_2|\leq  c{\mu}_{n,j}^{\frac{N+4}{2}}|f'(V({\mu}_{n,j}y+\xi_0))|_{\frac{N}{4}}|\varsigma_n({\mu}_{n,j}y+\xi_0)+\omega_n({\mu}_{n,j}y+\xi_0)|_{\frac{2N}{N-4}}|\chi_{n,j}
({\mu}_{n,j}y+\xi_0)|_{\frac{2N}{N-4}}
=o(1).
\end{equation*}
We can also see that $L_3$, $L_4$, $L_5$, $L_6\rightarrow 0$ in the same way.
Therefore, (\ref{e3.27}) follows by passing to the limit in (\ref{e3.24}).

Let us now prove (\ref{e3.28}). We have
\begin{align}\label{e3.33}
&\int_{\mathbb{R}^N}\Delta Z^h_{1,\sigma_j}(y)\Delta \check{u}_{n,j}(y)dy
\nonumber\\
=&\mu_{n,j}\int_{\frac{\sqrt{\mu_{n,j}\mu_{n,j+1}}}{2} \leq|x-\xi_0|\leq 2{\sqrt{\mu_{n,j}\mu_{n,j-1}}}}f'(U_{\mu_{n,j},\xi_{n,j}}(x)) Z^h_{\mu_{n,j},\xi_{n,j}}(x){u}_{n}(x)\chi_{n,j}(x)dx
\nonumber\\
=&\mu_{n,j}\left[\int_{A_{n,j}}f'(U_{\mu_{n,j},\xi_{n,j}}(x)) Z^h_{\mu_{n,j},\xi_{n,j}}(x){u}_{n}(x)+o(1)\right].
\end{align}
Now we observe that, by (\ref{e3.9}) and (\ref{e3.11})
\begin{align}\label{e3.29}
\mu_{n,j}\int_{\Omega_{\varepsilon_n}}\Delta PZ^h_{\mu_{n,j},\xi_{n,j}}(x)\Delta \check{u}_{n,j}(x)
=-\mu_{n,j}\langle PZ^h_{\mu_{n,j},\xi_{n,j}}, \omega_n \rangle=o(1).
\end{align}
On the other hand
\begin{align}\label{e3.30}
&\mu_{n,j}\int_{\Omega_{\varepsilon_n}}\Delta PZ^h_{\mu_{n,j},\xi_{n,j}}(x)\Delta \check{u}_{n,j}(x)
\nonumber\\
=&\mu_{n,j}\Big[\int_{A_{n,j}}+\sum_{l\neq j}\int_{A_{n,l}}+\int_{\Omega_{\varepsilon_n}\backslash B(\xi_{n,j},r)}\Big]f'(U_{\mu_{n,j},\xi_{n,j}}(x))Z^h_{\mu_{n,j},\xi_{n,j}}(x){u}_{n}(x)\nonumber\\
=&\mu_{n,j}\int_{A_{n,j}}f'(U_{\mu_{n,j},\xi_{n,j}}(x))Z^h_{\mu_{n,j},\xi_{n,j}}(x){u}_{n}(x)+o(1),
\end{align}
since
\begin{align*}
&\left|\mu_{n,j}\int_{\Omega_{\varepsilon_n}\backslash B(\xi_{n,j},r)}f'(U_{\mu_{n,j},\xi_{n,j}}(x))Z^h_{\mu_{n,j},\xi_{n,j}}(x){u}_{n}(x)\right|
\nonumber\\
\leq &c\mu_{n,j}|f'(U_{\mu_{n,j},\xi_{n,j}})|_{\frac{N}{4}}|Z^h_{\mu_{n,j},\xi_{n,j}}|_{\frac{2N}{N-4}}|{u}_{n}|_{\frac{2N}{N-4}}
=o(1),
\end{align*}
and for $l\neq j$,
\begin{align*}
&\left|\mu_{n,j}\int_{A_{n,l}}f'(U_{\mu_{n,j},\xi_{n,j}}(x))Z^h_{\mu_{n,j},\xi_{n,j}}(x){u}_{n}(x)\right|\nonumber\\
\leq & c\mu_{n,j}|f'(U_{\mu_{n,j},\xi_{n,j}})|_{\frac{N}{4}}|Z^h_{\mu_{n,j},\xi_{n,j}}|_{\frac{2N}{N-4}}|{u}_{n}|_{\frac{2N}{N-4}}
=o(1).
\end{align*}
By (\ref{e3.33}), (\ref{e3.29}) and (\ref{e3.30}), we get (\ref{e3.28}).

{\it Step 4}. We show that a contradiction arises with (\ref{e3.12}) , by showing that
\begin{align}\label{e3.34}
\int_{\Omega_{\varepsilon_n}}f'(V_{n})u_n^2=o(1).
\end{align}
This fact concludes the proof of this lemma.

Let us prove (\ref{e3.34}). We have
$$\int_{\Omega_{\varepsilon_n}}f'(V_{n})u_n^2=\sum_{j=1}^k\int_{A_{n,j}}f'(V_{n})u_n^2+\int_{\Omega_{\varepsilon_n}\setminus B(\xi_{0},r)}f'(V_{n})u_n^2.$$
For any $j=1,\cdots,k$, let $x-\xi_{0}=\mu_{n,j}y$, then we have
\begin{align*}
 &\int_{A_{n,j}}f'(V_{n})u_n^2 dx\leq c\sum_{i=1}^k\int_{A_{n,j}}U_{\mu_{n,i},\xi_{n,i}}^{p-1}u_n^2 dx
\nonumber\\
\leq &c\sum_{i=1}^k\mu_{n,j}^4\int_{\mathbb{R}^N}\left(\frac{\mu_{n,i}}{\mu_{n,i}^2+\mu_{n,j}^2|y-\frac{\mu_{n,i}}{\mu_{n,j}}\sigma_i|^2}\right)^4\check{u}_{n,j}^2(y) dy
\nonumber\\
\leq &c\sum_{i>j}\left(\frac{\mu_{n,i}}{\mu_{n,j}}\right)^4+c\int_{\mathbb{R}^N}\left(\frac{1}{1+|y|^2}\right)^4\check{u}_{n,j}^2(y) dy+c\sum_{i<j}\left(\frac{\mu_{n,j}}{\mu_{n,i}}\right)^4+o(1)
=o(1),
\end{align*}
and $\int_{\Omega_{\varepsilon_n}\setminus B(\xi_{0},r)}f'(V_{n})u_n^2=o(1)$.
That concludes the proof.
\end{proof}

Next, we study the nonlinear problem. We see that equation (\ref{e2.9}) is equivalent to
\begin{align}\label{e3.4}
L(\phi)=N(\phi)+R,
\end{align}
where the nonlinear term $N: K^\perp \rightarrow K^ \perp$ is defined by
\begin{align}\label{e3.2}
N(\phi):=\Pi^\perp \left\{i^*[f(V+\phi)-f(V)-f'(V)\phi]\right\},
\end{align}
the error term is defined by
\begin{align}\label{e3.3}
R:=\Pi^\perp \left\{i^*[f(V)]-V\right\}.
\end{align}

Based on Lemma \ref{lem3.2}, we can get the following result.
\begin{proposition}\label{pro3.3}
For any $d>0$ small but fixed, there exists $\varepsilon_0>0$ and $c>0$ such that for any $\mu\in\mathbb{R}^k_{+}$ and $\sigma\in\mathbb{R}^{Nk}$ satisfying(\ref{e2.8}) and for any $h\in K^\perp$, for any $\varepsilon \in(0,\varepsilon_0)$, there exists a unique solution $\phi \in K^\perp$ to $L(\phi)=h$. Furthermore,
\begin{align}\label{e3.35}
\|h\|\geq c\|\phi \|.
\end{align}
\end{proposition}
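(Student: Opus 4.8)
The plan is to derive Proposition \ref{pro3.3} from the coercivity estimate of Lemma \ref{lem3.2} via the Fredholm alternative. Since $\Pi^\perp$ is the identity on $K^\perp$, the restriction of $L$ to $K^\perp$ can be written as $L=\mathrm{Id}-T$, where $T:K^\perp\to K^\perp$ is the operator $T(\phi):=\Pi^\perp i^*[f'(V)\phi]$. Lemma \ref{lem3.2} already gives that $L$ is injective (hence uniqueness of $\phi$) and that, once $L$ is known to be onto, $\|L^{-1}\|\le 1/c$ uniformly in $\varepsilon$, $\mu$, $\sigma$ in the admissible range (\ref{e2.8}); in particular the estimate (\ref{e3.35}) is exactly Lemma \ref{lem3.2} read with $h=L(\phi)$. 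Thus the only point that requires an argument is the surjectivity of $L$.

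To apply the Fredholm alternative I would check that $T$ is a compact linear operator on $K^\perp$. It factors as the composition of the (continuous, but critical) Sobolev embedding $H^2(\Omega_\varepsilon)\cap H^1_0(\Omega_\varepsilon)\hookrightarrow L^{\frac{2N}{N-4}}(\Omega_\varepsilon)$, the multiplication map $\phi\mapsto f'(V)\phi$ from $L^{\frac{2N}{N-4}}(\Omega_\varepsilon)$ into $L^{\frac{2N}{N+4}}(\Omega_\varepsilon)$, the operator $i^*:L^{\frac{2N}{N+4}}(\Omega_\varepsilon)\to H^2(\Omega_\varepsilon)\cap H^1_0(\Omega_\varepsilon)$, and the projection $\Pi^\perp$. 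The multiplication map is bounded because $f'(V)=p|V|^{p-1}\in L^{N/4}(\Omega_\varepsilon)$ (with norm bounded uniformly in the parameters, as $\|V\|$ is) and $\tfrac{N+4}{2N}=\tfrac4N+\tfrac{N-4}{2N}$, so by H\"older's inequality and (\ref{e2.1}) the composition is bounded. For the compactness, although $H^2\cap H^1_0\hookrightarrow L^{\frac{2N}{N-4}}$ is \emph{not} compact, one exploits the integrability of the potential: given $\delta>0$, split $f'(V)=g_\delta+r_\delta$ with $g_\delta\in L^\infty(\Omega_\varepsilon)$ and $|r_\delta|_{N/4}<\delta$; then for a weakly null sequence $\phi_n\rightharpoonup 0$ in $K^\perp$, H\"older gives $|r_\delta\phi_n|_{\frac{2N}{N+4}}\le\delta\,|\phi_n|_{\frac{2N}{N-4}}\le C\delta$, while $|g_\delta\phi_n|_{\frac{2N}{N+4}}\to 0$ by the \emph{compact} subcritical embedding $H^2\cap H^1_0\hookrightarrow L^{q}$ for a suitable $q<\tfrac{2N}{N-4}$. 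Letting $\delta\to0$ gives $f'(V)\phi_n\to0$ in $L^{\frac{2N}{N+4}}(\Omega_\varepsilon)$, hence $T(\phi_n)\to0$ strongly by (\ref{e2.1}); thus $T$ is compact.

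With $L=\mathrm{Id}-T$ and $T$ compact, $L$ is Fredholm of index zero, so the injectivity provided by Lemma \ref{lem3.2} forces surjectivity: for every $h\in K^\perp$ there is a unique $\phi\in K^\perp$ with $L(\phi)=h$, and (\ref{e3.35}) holds. I expect the compactness of $T$ to be the only genuinely delicate step, precisely because the nonlinearity is critical and a Rellich-type compactness cannot be invoked directly; the density/splitting trick above — which replaces the non-compact critical embedding by a compact subcritical one at the cost of an arbitrarily small error measured in $L^{N/4}$ — is what makes it work. The factorization, the H\"older bound, and the Fredholm conclusion are then routine.
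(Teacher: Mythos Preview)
Your proposal is correct and follows essentially the same route as the paper: write $L=\mathrm{Id}-T$ with $T$ a compact perturbation, invoke the Fredholm alternative together with the injectivity from Lemma~\ref{lem3.2}, and read off the bound~(\ref{e3.35}). The paper's argument is terser---it bounds $\|G(\phi_n)\|\le c\bigl(\int pV^{p-1}\phi_n^2\bigr)^{1/2}$ and asserts this tends to zero using only ``$\phi_n\to0$ in $L^2$ over compacts''---whereas your density/splitting trick (decompose $f'(V)\in L^{N/4}$ into a bounded part plus a small-$L^{N/4}$ remainder) makes precisely this step rigorous, which is the right thing to do since the critical embedding is not compact.
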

\begin{proof}
By Lemma \ref{lem3.2}, we get $L^{-1}(h)=\phi-i^*[f'(V)\phi]$.
Let $\hat{h}=L^{-1}(h)$ and $G(\phi)=-i^*[f'(V)\phi]$, then
\begin{align}\label{e3.36}
\phi+G(\phi)=\hat{h},
\end{align}
where $G:K^\perp\rightarrow K^\perp$ is a linear operator.
We next prove that $G$ is a compact operator.
\par Assume $\phi_n\rightharpoonup 0$ in $H^2(\Omega_\varepsilon)\cap H_0^1(\Omega_\varepsilon)$, then $\phi_n\rightarrow 0$ in $L^2$ over compacts and
\begin{align*}
|\langle G(\phi_n), \varphi \rangle|=&|\int_{\Omega_\varepsilon}f'(V)\phi_n\varphi|
\leq \left(\int_{\Omega_\varepsilon}pV^{p-1}\phi_n^2\right)^{\frac{1}{2}}\left(\int_{\Omega_\varepsilon}pV^{p-1}\varphi^2\right)^{\frac{1}{2}}
\leq c \left(\int_{\Omega_\varepsilon}pV^{p-1}\phi_n^2\right)^{\frac{1}{2}}\|\varphi\|.
\end{align*}
Let $\varphi=G(\phi_n)$, then $||G(\phi_n)|| \leq c \left(\int_{\Omega_\varepsilon}pV^{p-1}\phi_n^2\right)^{\frac{1}{2}}\rightarrow 0$.
Consequently, $G$ is a compact operator. By $\text{Fredholm's}$ alternative, we get equation (\ref{e3.36}) has a unique solution for each $\hat{h}$. The estimate (\ref{e3.35}) follows from Lemma \ref{lem3.2}.
\end{proof}

Next, we want to study the estimate of the error term $R$ defined in (\ref{e3.3}).
\begin{lemma}\label{lem4.4}
For any $d>0$ small but fixed, for any $\mu\in\mathbb{R}^k_{+}$ and $\sigma\in\mathbb{R}^{Nk}$ satisfying (\ref{e2.8}), if $\varepsilon>0$ is small enough, there holds
\begin{align*}
\|R\|\leq c
\begin{cases}\varepsilon^{ \frac{(N-4)\theta}{2k}\frac{p}{2}}, &\text{ if }N\geq 13,\\
  \varepsilon^{ \frac{(N-4)\theta}{2k}}|\ln \varepsilon|, &\text{ if }N=12,\\
  \varepsilon^{ \frac{(N-4)\theta}{2k}}, &\text{ if }5\leq N\leq 11.\end{cases}
\end{align*}
\end{lemma}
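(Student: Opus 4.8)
The plan is to estimate $\|R\| = \|\Pi^\perp\{i^*[f(V)]-V\}\|$ by discarding the projection (which only decreases the norm) and using the continuity estimate \eqref{e2.1} for $i^*$, so that
\begin{align*}
\|R\| \leq \|i^*[f(V)]-V\| \leq c\,\big|f(V)-\textstyle\sum_{i=1}^k(-1)^{i+1}U_{\mu_{i\varepsilon},\xi_{i\varepsilon}}^{p}\big|_{\frac{2N}{N+4}} + (\text{boundary/projection corrections}).
\end{align*}
More precisely, since $\Delta^2 V = \sum_i(-1)^{i+1}\Delta^2 P_\varepsilon U_{\mu_{i\varepsilon},\xi_{i\varepsilon}} = \sum_i(-1)^{i+1}U_{\mu_{i\varepsilon},\xi_{i\varepsilon}}^p$ by \eqref{PU}, we have $V = i^*\big[\sum_i(-1)^{i+1}U_{\mu_{i\varepsilon},\xi_{i\varepsilon}}^p\big]$, hence
\begin{align*}
\|R\| \leq c\left|\,f(V) - \sum_{i=1}^k(-1)^{i+1}U_{\mu_{i\varepsilon},\xi_{i\varepsilon}}^{p}\,\right|_{\frac{2N}{N+4}}.
\end{align*}
So everything reduces to an $L^{\frac{2N}{N+4}}$-estimate of the pointwise discrepancy between $f(V)=|V|^{p-1}V$ and the sum of the bubble nonlinearities.

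The key steps, in order: First I would split $\Omega_\varepsilon$ into the annular regions $A_{i}$ (the shrinking dyadic annuli centered at $\xi_0$ of the type used in \eqref{e3.19}, one adapted to each scale $\mu_{i\varepsilon}$) plus the outer region away from $\xi_0$. On each $A_i$ the bubble $U_{\mu_{i\varepsilon},\xi_{i\varepsilon}}$ dominates and the others are lower order; there I write $V = (-1)^{i+1}U_{\mu_{i\varepsilon},\xi_{i\varepsilon}} + w_i$ where $w_i$ collects the projection errors $P_\varepsilon U - U$ (controlled by Lemma \ref{lem6.1} and the Green/Robin terms) together with the tails of the other bubbles, and expand $|V|^{p-1}V - (-1)^{i+1}U_{\mu_{i\varepsilon},\xi_{i\varepsilon}}^p$ using the elementary inequality $\big||a+b|^{p-1}(a+b) - |a|^{p-1}a\big| \leq c\big(|a|^{p-1}|b| + |b|^{p}\big)$. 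The cross term $|a|^{p-1}|b|$ and the pure term $|b|^p$ are then integrated against the measure on $A_i$ after rescaling $x = \mu_{i\varepsilon}y + \xi_0$; the dominant contribution comes from the interaction between consecutive bubbles $U_{\mu_{i\varepsilon}}$ and $U_{\mu_{(i+1)\varepsilon}}$, which after rescaling behaves like $(\mu_{(i+1)\varepsilon}/\mu_{i\varepsilon})^{(N-4)/2} = \varepsilon^{\frac{(N-4)\theta}{2k}}$ up to constants, using the definition \eqref{e2.6} of $\mu_{i\varepsilon}$ and $\theta$. Second, the term $|b|^p$ is the one responsible for the dimension-dependent split: when $p \geq 2$ (i.e.\ $N \leq 12$) it is harmless and bounded by $\|b\|^p$ which is of the same or higher order, while for $p < 2$ (i.e.\ $N \geq 13$) one loses a power and the cross term $|a|^{p-1}|b|$ with $b$ the neighboring bubble tail gives the $\varepsilon^{\frac{(N-4)\theta}{2k}\cdot\frac{p}{2}}$ rate after computing the relevant $L^{\frac{2N}{N+4}}$ integral of a product of two standard bubbles at comparable but distinct scales; the logarithm at $N=12$ comes from the borderline divergence of exactly that integral. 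Third, on the outer region $\Omega_\varepsilon \setminus B(\xi_0,r)$ all bubbles are uniformly small together with their projection errors, and the contribution there is $O(\mu_{1\varepsilon}^{(N+4)/2})$ or better, which is negligible compared with the stated rates.

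The main obstacle I expect is the bookkeeping of the interaction integrals between bubbles at the $k$ different geometrically-spaced scales $\mu_{i\varepsilon} = \mu_i\varepsilon^{\frac{2i-1}{2k}\theta}$: one must check that the worst contribution really is the nearest-neighbor interaction and that it has precisely the exponent $\frac{(N-4)\theta}{2k}$ (or $\frac{(N-4)\theta}{2k}\cdot\frac p2$ for $N\ge 13$), uniformly in $i$, and that the error terms $R$ from Lemma \ref{lem6.1} near the hole (which carry powers of $\varepsilon/\mu_{k\varepsilon}$) do not dominate — this is exactly where the choice $\theta = \frac{2k(N-2)}{2k(N-2)-2}$ is calibrated so that the hole-correction error and the bubble-interaction error balance. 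The remaining estimates are the standard bubble-interaction computations (cf.\ the Appendix and \cite{Ala,Ge,Musso}), so I would state the needed integral asymptotics of $\int_{\mathbb{R}^N} U_{1,0}^{p}(y)\,U_{\lambda,\zeta}(y)\,dy$ as $\lambda\to 0$ as a lemma and invoke it, keeping the proof of Lemma \ref{lem4.4} itself at the level of the region decomposition and the elementary $p$-power inequality.
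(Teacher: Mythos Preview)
Your approach is correct and matches the paper's: reduce to an $L^{2N/(N+4)}$ estimate via \eqref{e2.1} and $V=i^*[\sum_i(-1)^{i+1}U_i^p]$, decompose $\Omega_\varepsilon$ into the annuli $A_l$ plus an outer region, and control the discrepancy with the elementary inequality for $|a+b|^{p-1}(a+b)-|a|^{p-1}a$ together with the bubble--interaction integrals of the Appendix (the paper inserts the intermediate term $\sum_j(-1)^{j+1}f(PU_j)$ to separate projection errors from interaction errors, but this is only a bookkeeping variant of your unified $w_i$). One small correction to your heuristic: the dimension split at $N=12$ comes from the cross term $\int_{A_l}|U_l^{p-1}U_i|^{\beta}$ (Lemma~\ref{lem6.8}), not from $|b|^p$ --- the pure term contributes $O(\varepsilon^{(N+4)\theta/(4k)})$ in every dimension, whereas the cross term changes rate precisely because $U_l^{(p-1)\beta}$ ceases to be integrable on $\mathbb{R}^N$ when $N\geq 12$.
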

\begin{proof}
For simplicity, we write $U_{j}:=U_{\mu_{j\varepsilon},\xi_{j\varepsilon}}$, $PU_{j}:=P_{\varepsilon}U_{\mu_{j\varepsilon},\xi_{j\varepsilon}}$. By the definition of $i^*$ and $\Delta^2 PU_j=U_j^p$, we have $PU_j=i^*(U_j^p)=i^*(f(U_j))$ and
\begin{align*}
R=\Pi^\perp\Big\{i^*[f(V)]-\sum_{j=1}^k (-1)^{j+1}PU_j\Big\}=\Pi^\perp \Big\{i^*[f(V)-\sum_{j=1}^k(-1)^{j+1}f(U_j)]\Big\}.
\end{align*}
Using (\ref{e2.1}), we find
\begin{align}\label{rdfg}
\|R\|
\leq &C\Big(|f(V)-\sum_{j=1}^k(-1)^{j+1}f(PU_j)|_{\frac{2N}{N+4}}+|\sum_{j=1}^k(-1)^{j+1}f(PU_j)-\sum_{j=1}^k(-1)^{j+1}f(U_j)|_{\frac{2N}{N+4}}\Big)
\nonumber\\
:=&C(W_1+W_2).
\end{align}

\text{Estimate of $W_1$}. Let $r>0$, $\beta:={\frac{2N}{N+4}}$, then
\begin{align*}
W_1^{\beta}=&\left(\int_{\Omega_\varepsilon\backslash B(\xi_0,r)}+\int_{\Omega_\varepsilon\bigcap B(\xi_0,r)}\right)|f(V)-\sum_{j=1}^k(-1)^{j+1}f(PU_j)|^\beta
:=W_{11}+W_{12}.
\end{align*}
Since the fact $|V|\leq |\sum_{j=1}^kU_j|\leq c\sum_{j=1}^k\mu_{j\varepsilon}=O(\varepsilon^{\frac{(N-4)\theta}{4k}})$ in $\Omega_\varepsilon\backslash B(\xi_0,r)$, we deduce that
\begin{align*}
W_{11}=\int_{\Omega_\varepsilon\backslash B(\xi_0,r)}|f(V)-\sum_{j=1}^k(-1)^{j+1}f(PU_j)|^\beta=O(\varepsilon^{\frac{(N-4)\theta}{4k}p\beta}).
\end{align*}
Next, we decompose the set
${{\Omega_\varepsilon}\cap B(\xi_0,r)}=B(\xi_0,r)\setminus B(\xi_0,\varepsilon)$ into the union of non-overlapping annuli, i.e.,
\begin{align}\label{e4.27}
B(\xi_0,r)\backslash B(\xi_0,\varepsilon)=\bigcup_{l=1}^k A_l,
\end{align}
where for all $l=1,\cdots,k$,
\begin{align}\label{e4.28}
A_l=B(\xi_0,\sqrt{\mu_{l\varepsilon}\mu_{(l-1)\varepsilon}})\backslash B(\xi_0,\sqrt{\mu_{l\varepsilon}\mu_{(l+1)\varepsilon}})
\end{align}
with $\mu_{0\varepsilon}\mu_{1\varepsilon}=r^2$ and $\mu_{k\varepsilon}\mu_{(k+1)\varepsilon}=\varepsilon^2$.
We write
$$W_{12}=\sum_{l=1}^k\int_{A_l}|f(V)-\sum_{j=1}^k(-1)^{j+1}f(PU_j)|^\beta,$$
where
\begin{align*}
\int_{A_l}|f(V)-\sum_{j=1}^k(-1)^{j+1}f(PU_j)|^\beta
\leq  c\left(\sum_{i\neq l}\int_{A_l}|U_l^{p-1}U_i|^{\beta}+\sum_{i\neq l}\int_{A_l}U_i^{p\beta}\right)\ \text{for } l=1,...,k.
\end{align*}
From Lemma \ref{lem6.6} and $\frac{N\theta}{2k}=\frac{(N-4)\theta}{2k}\frac{p\beta}{2}$, we get $\int_{A_l}U_i^{p\beta}=\int_{A_l}U_i^{p+1}=O(\varepsilon^{\frac{N\theta}{2k}})=O(\varepsilon^{\frac{(N-4)\theta}{2k}\frac{p\beta}{2}}).$\\
Combining with Lemma \ref{lem6.8}, we get
\begin{align}\label{e4.35}
|W_{1}|\leq
\begin{cases}c\varepsilon^{\frac{(N-4)\theta}{2k}\frac{p}{2}}, &\text{ if }N\geq 13,\\
  c\varepsilon^{\frac{(N-4)\theta}{2k}} |\ln \varepsilon|, &\text{ if }N=12,\\
  c\varepsilon^{\frac{(N-4)\theta}{2k}}, &\text{ if }5 \leq N \leq 11.\end{cases}
\end{align}

\text{Estimate of $W_2$}. From the mean value Theorem, we have that
\begin{align*}
\int_{\Omega_\varepsilon}|(PU_i)^p-U_i^p|^\beta \leq & c\left(\int_{\Omega_\varepsilon}|U_i^{p-1}(PU_i-U_i)|^\beta+\int_{\Omega_\varepsilon}|PU_i-U_i|^{p\beta}\right)
:=c(W_{21}+W_{22}).
\end{align*}
Using Lemma \ref{lem6.1}, we get
\begin{align*}
|W_{21}|\leq c\int_{\Omega_\varepsilon}\bigg|U_i^{p-1}\bigg(\mu_{i\varepsilon}^{\frac{N-4}{2}}+\frac{\varepsilon^{N-2}}{\mu_{i\varepsilon}^{\frac{N}{2}}}\frac{1}{|x-\xi_0|^{N-4}}+\frac{\varepsilon^{N-2}}{\mu_{i\varepsilon}^{\frac{N-4}{2}}}\frac{1}{|x-\xi_0|^{N-2}}
+\frac{\varepsilon^{N}}{\mu_{i\varepsilon}^{\frac{N}{2}}}\frac{1}{|x-\xi_0|^{N-2}}\bigg)\bigg|^\beta.
\end{align*}
Since
\begin{align*}
\int_{\Omega_\varepsilon}U_i^{(p-1)\beta}=&\int_{\Omega_\varepsilon}\left(\frac{\mu_{j\varepsilon}^4}{(\mu_{j\varepsilon}^2+|x-\xi_{j\varepsilon}|^2)^4}\right)^\beta
=\begin{cases}O(\mu_{j\varepsilon}^{4\beta}), &\text{ if }N\geq 13,\\
  O(\mu_{j\varepsilon}^{4\beta}|\ln \mu_{j\varepsilon}|^\beta), &\text{ if }N=12,\\
  O(\mu_{j\varepsilon}^{N-4\beta}), &\text{ if }5\leq N\leq 11.\end{cases}
\end{align*}
Then
\begin{align}\label{e4.36}
|W_{2}|\leq
\begin{cases}c\varepsilon^{\frac{(N-4)\theta}{2k}\frac{p}{2}}, &\text{ if }N\geq 13,\\
  c\varepsilon^{\frac{(N-4)\theta}{2k}} |\ln \varepsilon|, &\text{ if }N=12,\\
  c\varepsilon^{\frac{(N-4)\theta}{2k}}, &\text{ if }5 \leq N \leq 11.\end{cases}
\end{align}
The result follows  from (\ref{rdfg}), (\ref{e4.35}) and (\ref{e4.36}).
\end{proof}

\noindent{\it Proof of Proposition \ref{pro3.1}}:
By Proposition \ref{pro3.3}, we define $T(\phi):= L^{-1}(N(\phi)+R)\text{ for } \phi\in K^\perp$.
Next, we claim that $T$ is a contraction map. Set
\begin{align*}
q(\varepsilon):=c\begin{cases}\varepsilon^{ \frac{(N-4)\theta}{2k}\frac{p}{2}}, &\text{ if }N\geq 13,\\
  \varepsilon^{ \frac{(N-4)\theta}{2k}}|\ln \varepsilon|, &\text{ if }N=12,\\
  \varepsilon^{ \frac{(N-4)\theta}{2k}}, &\text{ if }5\leq N\leq 11.\end{cases}
\end{align*}
We first prove that there exists a properly subset $\Lambda:=\{\phi: \|\phi\|\leq q(\varepsilon)\}$ of $H^2\cap H^1_0(\Omega_\varepsilon)$ such that $T:\Lambda\rightarrow \Lambda$.
From Lemma \ref{lem3.2}, we have $\|T(\phi)\|\leq c\|N(\phi)+R\|\leq c(\|N(\phi)\|+\|R\|)$.

By (\ref{e2.1}), we get $\|N(\phi)\|\leq c|f(V+\phi)-f(V)-f'(V)\phi|_{\frac{2N}{N+4}}$.
From Lemma A.1 in \cite{Musso}, we have
\begin{align*}
|f(V+\phi)-f(V)-f'(V)\phi|\leq
\begin{cases}c|\phi|^p, &\text{ if }N>12,\\
  c(|V|^{p-2}|\phi|^2+|\phi|^p), &\text{ if }5 \leq N\leq 12.\end{cases}
\end{align*}
Thus $\|N(\phi)\|\leq c|\phi|_{\frac{2N}{N+4}}^{\min\{2,p\}}\leq c\|\phi\|^{\min\{2,p\}}$.
Then by Lemma \ref{lem4.4}, we obtain $\|T(\phi)\|\leq q(\varepsilon)$.
\par Secondly, we prove that $T$ is a contraction map.
Similar to the proof above, we get that, for some $0<t<1$,
\begin{align*}
\|T(\phi_1)-T(\phi_2)\|\leq &c\|N(\phi_1)-N(\phi_2)\|
\nonumber\\
\leq &c|f(V+\phi_1)-f(V+\phi_2)-f'(V)(\phi_1-\phi_2)|_{\frac{2N}{N+4}}
\leq t\|\phi_1-\phi_2\|.
\end{align*}
Thus $T:\Lambda\rightarrow \Lambda$ is a contraction map, then it has a unique fixed point $\phi \in \Lambda$.
\par Finally, we show that $(\mu,\sigma)\mapsto \phi(\mu,\sigma)$ is a $C^1$-map with respect to $\mu, \sigma$.
Let us define $F(\mu,\sigma,\phi)=L(\phi)-N(\phi)-R$, then $F(\mu,\sigma,\phi)=0$ and $D_\phi F(\mu,\sigma,\phi)=L(\zeta)-D_\phi N(\phi)[\zeta]$ for all $\zeta\in K^{\perp}$. We choose $\zeta\in K^{\perp}$ such that $D_\phi F(\mu,\sigma,\phi)=L(\zeta)-D_\phi N(\phi)[\zeta]=0$.

From Lemma \ref{lem3.2}, we have
\begin{align}\label{e3.37}
\|\zeta\|\leq c\|L(\zeta)\|.
\end{align}
However, by (\ref{e2.1}), the H\"{o}lder inequality and $|f'(V+\phi)-f'(V)|_{\frac{N}{4}}=o(1)$, we see that
\begin{align}\label{e3.38}
\|D_\phi N(\phi)[\zeta]\|=&\parallel \Pi^{\perp}\{i^*[f'(V+\phi)\zeta-f'(V)\zeta]\}\parallel
\nonumber\\
\leq &c|f'(V+\phi)\zeta-f'(V)\zeta|_{\frac{2N}{N+4}}\leq c|f'(V+\phi)-f'(V)|_{\frac{N}{4}}|\zeta|_{\frac{2N}{N-4}}\leq co(1)\|\zeta\|.
\end{align}
By (\ref{e3.37}) and (\ref{e3.38}), we have $\zeta=0$, then $D_\phi F(\mu,\sigma,\phi)$ is injective for $\varepsilon$ small enough. Using the implicit function Theorem, we prove $(\mu,\sigma)\mapsto \phi(\mu,\sigma)$ is a $C^1$-map with respect to $\mu$, $\sigma$.
\qed

\section{Expansion of the energy functional}\label{exp}

This section is devoted to the proof of Proposition \ref{pro3.4}.

\noindent {\it Proof of Proposition \ref{pro3.4} (i)}:
By (\ref{e2.9}), there exist constants $c_i^l$, $l=0,\cdots,N$, $i=1,\cdots,k$, such that
\begin{align}\label{e3.41}
\nabla I(\mu,\sigma)=\langle V+\phi-i^*(f(V+\phi)), \nabla V+\nabla \phi \rangle
=\sum_{l=0}^N\sum_{i=1}^kc_i^l\langle PZ_i^l, \nabla V+\nabla \phi \rangle.
\end{align}
If we compute (\ref{e3.41}) at $(\mu,\sigma)$, which is a critical point of $I$, we then get
\begin{align}\label{e3.42}
\sum_{l=0}^N\sum_{i=1}^kc_i^l\langle PZ_i^l, \nabla V+\nabla \phi \rangle=0.
\end{align}
Since
\begin{align}\label{e3.43}
\partial_{\mu_h}V=\varepsilon^{\frac{2h-1}{2k}\theta}PZ_h^0+\varepsilon^{\frac{2h-1}{2k}\theta}\sum_{j=1}^NPZ_h^j\sigma_{hj}, \qquad
  \partial_{\sigma_r^j}V=\mu_{r\varepsilon}PZ_r^j,
\end{align}
and by Lemma \ref{lem6.3}, we have, for $h=1,\cdots,k$
\begin{align}\label{e3.44}
\sum_{l=0}^N\sum_{i=1}^kc_i^l\langle PZ_i^l, \partial_{\mu_h}V \rangle=&\varepsilon^{\frac{2h-1}{2k}\theta}\sum_{l=0}^N\sum_{i=1}^kc_i^l\langle PZ_i^l, PZ_h^0 \rangle+\varepsilon^{\frac{2h-1}{2k}\theta}\sum_{l=0}^N\sum_{i=1}^k\sum_{j=1}^N\sigma_{hj}c_i^l\langle PZ_i^l, PZ_h^j \rangle
\nonumber\\
=&\frac{\varepsilon^{\frac{2h-1}{2k}\theta}}{\mu_{h\varepsilon}^2}c_h^0[c_0+o(1)]+\frac{\varepsilon^{\frac{2h-1}{2k}\theta}}{\mu_{h\varepsilon}^2}\sum_{j=1}^N\sigma_{hj}c_h^j[c_1+o(1)], \end{align}
and for $r=1,\cdots,k$, $j=1,\cdots,N$,
\begin{align}\label{e3.45}
\sum_{l=0}^N\sum_{i=1}^kc_i^l\langle PZ_i^l, \partial_{\sigma_r^j}V \rangle=\mu_{r\varepsilon}\sum_{l=0}^N\sum_{i=1}^kc_i^l\langle PZ_i^l, PZ_r^j \rangle
=\frac{1}{\mu_{r\varepsilon}}c_r^j[c_1+o(1)].
\end{align}
Moreover, since $\langle PZ_i^l,\phi \rangle=0$ for $\phi\in K^\perp$ and by Proposition \ref{pro3.1}, we have
\begin{align}\label{e3.46}
\langle PZ_i^l,\partial_s\phi \rangle=-\langle \partial_sPZ_i^l,\phi \rangle=O(\|\partial_sPZ_i^l\|\|\phi\|)=o(\|\partial_sPZ_i^l\|),
\end{align}
where $\partial_s$ denotes one of the components of the gradient of $\phi$ or $PZ_i^l$.
\par Since $\Delta^2 PU=U^p$ in $\Omega_\varepsilon$, $|\partial_{\mu_{i\varepsilon}}Z^l_i|\leq c\frac{U}{{\mu_{i\varepsilon}}^2}$ , $|Z^l_i|\leq c\frac{U}{{\mu_{i\varepsilon}}}$ and $||U^{p}||_{\frac{2N}{N+4}}=O(1)$, we get
\begin{align*}
\|\partial_{\mu_i}PZ_i^l\|\leq c\varepsilon^{\frac{2i-1}{2k}\theta}\parallel U^{p-1}\partial_{\mu_{i\varepsilon}}Z^l_i+U^{p-2}{Z^l_i}^2\parallel_{\frac{2N}{N+4}}
\leq c\frac{\varepsilon^{\frac{2i-1}{2k}\theta}}{{\mu_{i\varepsilon}}^2}||U^{p}||_{\frac{2N}{N+4}}
\leq c\frac{\varepsilon^{\frac{2i-1}{2k}\theta}}{{\mu_{i\varepsilon}}^2}.
\end{align*}
If $i\neq h$, we have $\partial_{\mu_h}PZ_i^l=0$ and $\|\partial_{\mu_h}PZ_i^l\|=0$.
We can estimate $\|\partial_{\sigma_r^j}PZ_i^l\|$ in the same way.
Then
\begin{align}\label{e3.47}
\|\partial_sPZ_i^l\|=
\begin{cases}\|\partial_{\mu_h}PZ_i^l\|=0, &\text{ if }h\neq i, h=1,\cdots,k,\\
  \|\partial_{\mu_h}PZ_h^l\|=O(\frac{\varepsilon^{\frac{2h-1}{2k}\theta}}{\mu_{h\varepsilon}^2}), &\text{ if }h=i,\\
  \|\partial_{\sigma_r^j}PZ_i^l\|=0, &\text{ if }r\neq i,\\
  \|\partial_{\sigma_r^j}PZ_r^l\|=O(\frac{1}{\mu_{r\varepsilon}}), &\text{ if }r=i.\end{cases}
\end{align}
Thus, we deduce that $c_i^l=0$ for all $l=0,\cdots,N$, $i=1,\cdots,k$.
\qed

\smallskip

We next give the $C^1$-expansion of functional $I(\mu,\sigma)$.

\begin{lemma}\label{lem4.2}
For any $d>0$ small but fixed, there exist $\varepsilon_0>0$ and $c>0$ such that   for any $\varepsilon \in(0,\varepsilon_0)$,
\begin{align}\label{e4.3}
I(\mu,\sigma):=J_\varepsilon(V+\phi)=J_\varepsilon(V)+o(\varepsilon^{\frac{(N-4)\theta}{2k}})
\end{align}
$C^1$-uniformly with respect to $\mu$ and $\sigma$ satisfying (\ref{e2.8}).
\end{lemma}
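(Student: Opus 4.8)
\emph{Approach.} The idea is to expand $J_\varepsilon(V+\phi)$ around $V$ and to check that the functional, together with its first derivatives in $(\mu,\sigma)$, changes only by $o(\varepsilon^{\frac{(N-4)\theta}{2k}})$. Setting $g(t)=J_\varepsilon(V+t\phi)$ and using $g(1)-g(0)=g'(0)+\int_0^1(1-t)g''(t)\,dt$ with $g''(t)=\|\phi\|^2-\int_{\Omega_\varepsilon}f'(V+t\phi)\phi^2$, we obtain
\[
I(\mu,\sigma)-J_\varepsilon(V)=DJ_\varepsilon(V)[\phi]+\int_0^1(1-t)\Big(\|\phi\|^2-\int_{\Omega_\varepsilon}f'(V+t\phi)\phi^2\Big)\,dt .
\]
Because $\phi\in K^\perp$ and, by (\ref{e3.3}), $i^*[f(V)]-V=R+\Pi\{i^*[f(V)]-V\}$ with $R\in K^\perp$ and $\Pi\{\cdot\}\in K$, one has $DJ_\varepsilon(V)[\phi]=\langle V-i^*[f(V)],\phi\rangle=-\langle R,\phi\rangle$. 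By Proposition \ref{pro3.1} --- more precisely the fixed-point argument in its proof, which yields $\|\phi\|\le c(\|\phi\|^{\min\{2,p\}}+\|R\|)$ and hence $\|\phi\|\le c\|R\|$ --- we have $\|\phi\|\le c\,q(\varepsilon)$, where $q(\varepsilon)$ denotes the bound of Lemma \ref{lem4.4}. Thus $|DJ_\varepsilon(V)[\phi]|\le\|R\|\,\|\phi\|\le c\,q(\varepsilon)^2$, and the remainder integral is bounded by $c(\|\phi\|^2+|f'(V+t\phi)|_{\frac N4}\,\|\phi\|^2)\le c\,q(\varepsilon)^2$ since $\|V+t\phi\|$ stays bounded. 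One then checks $q(\varepsilon)^2=o(\varepsilon^{\frac{(N-4)\theta}{2k}})$ in each range ($\varepsilon^{\frac{(N-4)\theta}{2k}p}$ with $p>1$ for $N\ge13$; $\varepsilon^{\frac{(N-4)\theta}{k}}|\ln\varepsilon|^2$ for $N=12$; $\varepsilon^{\frac{(N-4)\theta}{k}}$ for $5\le N\le11$), which gives (\ref{e4.3}) without derivatives.

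\emph{The $C^1$ estimate.} Let $s$ be one of the components of $\mu$ or $\sigma$. Differentiating the identity above,
\[
\partial_s\big(I(\mu,\sigma)-J_\varepsilon(V)\big)=\big(DJ_\varepsilon(V+\phi)-DJ_\varepsilon(V)\big)[\partial_sV]+DJ_\varepsilon(V+\phi)[\partial_s\phi].
\]
By (\ref{e3.43}), $\partial_sV$ is a linear combination of the $PZ_i^l$, so $\partial_sV\in K$, $\langle\phi,\partial_sV\rangle=0$, and the first summand reduces to $-\int_{\Omega_\varepsilon}\big(f(V+\phi)-f(V)\big)\partial_sV$. Writing $f(V+\phi)-f(V)=f'(V)\phi+O(|V|^{p-2}\phi^2+|\phi|^p)$ (just $O(|\phi|^p)$ if $N>12$), the higher-order part contributes $O(\|\phi\|^2+\|\phi\|^p)=o(\varepsilon^{\frac{(N-4)\theta}{2k}})$, while for the linear part we write $\int_{\Omega_\varepsilon}f'(V)\phi\,\partial_sV=\langle i^*[f'(V)\partial_sV],\phi\rangle$. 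Since $\Delta^2 Z^l_{\mu,\xi}=f'(U_{\mu,\xi})Z^l_{\mu,\xi}$ forces $PZ^l_{\mu,\xi}=i^*[f'(U)Z^l]$, the difference $i^*[f'(V)\partial_sV]-\partial_sV$ is governed by the projection errors of Lemma \ref{lem6.1} and by the interaction of distinct bubbles, so $\|i^*[f'(V)\partial_sV]-\partial_sV\|=o(1)$, and pairing with $\phi\in K^\perp$ yields $\langle i^*[f'(V)\partial_sV],\phi\rangle=\langle i^*[f'(V)\partial_sV]-\partial_sV,\phi\rangle=o(1)\|\phi\|=o(\varepsilon^{\frac{(N-4)\theta}{2k}})$.

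\emph{The term $DJ_\varepsilon(V+\phi)[\partial_s\phi]$.} By the reduced equation (\ref{e2.9}), $w:=V+\phi-i^*[f(V+\phi)]=\sum_{i,l}c_i^lPZ_i^l\in K$. Differentiating $\langle PZ_i^l,\phi\rangle=0$ in $s$ gives $\langle PZ_i^l,\partial_s\phi\rangle=-\langle\partial_sPZ_i^l,\phi\rangle$, hence $DJ_\varepsilon(V+\phi)[\partial_s\phi]=\langle w,\partial_s\phi\rangle=-\sum_{i,l}c_i^l\langle\partial_sPZ_i^l,\phi\rangle$. Testing $w$ against each $PZ_{i'}^{l'}$ and solving the resulting linear system --- which by Lemma \ref{lem6.3} is diagonally dominant with diagonal entries of size $\mu_{i\varepsilon}^{-2}$ --- together with $\langle\phi,PZ_i^l\rangle=0$ and the Hölder bound $\langle V-i^*[f(V)],PZ_i^l\rangle=\int_{\Omega_\varepsilon}\big[\sum_j(-1)^{j+1}f(U_j)-f(V)\big]PZ_i^l=O\big(q(\varepsilon)\,\mu_{i\varepsilon}^{-1}\big)$ (using Lemma \ref{lem4.4} and $|PZ_i^l|_{\frac{2N}{N-4}}=O(\mu_{i\varepsilon}^{-1})$), one obtains $|c_i^l|=O(\mu_{i\varepsilon}\,q(\varepsilon))$. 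Feeding this, the bounds (\ref{e3.47}) for $\|\partial_sPZ_i^l\|$, and $\|\phi\|\le q(\varepsilon)$ into the sum, the powers $\varepsilon^{\frac{2i-1}{2k}\theta}$ cancel against $\mu_{i\varepsilon}=\mu_i\varepsilon^{\frac{2i-1}{2k}\theta}$ and each summand is $O(q(\varepsilon)^2)=o(\varepsilon^{\frac{(N-4)\theta}{2k}})$. Combining the three contributions gives (\ref{e4.3}) $C^1$-uniformly in $\mu,\sigma$.

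\emph{Main difficulty.} The delicate point is the last step: the Lagrange multipliers $c_i^l$ must be controlled one power of $\mu_{i\varepsilon}$ better than a crude bound, which forces the weighted pairing of the error term with $PZ_i^l$ and a careful tracking of the powers of $\varepsilon$ carried by the concentration scales $\mu_{i\varepsilon}=\mu_i\varepsilon^{\frac{2i-1}{2k}\theta}$, so that the diverging factors in $\|\partial_sPZ_i^l\|$ appearing in (\ref{e3.47}) are exactly absorbed. The $C^0$-part and the $\partial_sV$-term are comparatively routine once $\|\phi\|\le c\,q(\varepsilon)$ is available.
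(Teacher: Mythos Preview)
Your overall architecture matches the paper's: Taylor-expand $J_\varepsilon(V+\phi)$ around $V$ for the $C^0$ part, then split the $s$-derivative into the $\partial_sV$-piece and the $\partial_s\phi$-piece, the latter handled through the Lagrange multipliers $c_i^l$. The $C^0$ argument and the treatment of $DJ_\varepsilon(V+\phi)[\partial_s\phi]$ are essentially the same as the paper's (Lemma~\ref{lem6.3} plus (\ref{e3.47}) and the bound $|c_i^l|=O(\mu_{i\varepsilon}\,q(\varepsilon))$).

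There is, however, a genuine gap in your control of the linear piece $\int_{\Omega_\varepsilon}f'(V)\phi\,\partial_sV$. You reduce it to $\langle i^*[f'(V)\partial_sV]-\partial_sV,\phi\rangle$ and then assert $\|i^*[f'(V)\partial_sV]-\partial_sV\|=o(1)$, concluding $o(1)\|\phi\|=o(\varepsilon^{\frac{(N-4)\theta}{2k}})$. This last equality is \emph{false} for $N\ge 12$: by Proposition~\ref{pro3.1}, when $N\ge 13$ one only has $\|\phi\|=O(\varepsilon^{\frac{(N-4)\theta}{2k}\cdot\frac{p}{2}})$ with $\frac{p}{2}=\frac{N+4}{2(N-4)}<1$, so $\|\phi\|$ is \emph{strictly larger} than $\varepsilon^{\frac{(N-4)\theta}{2k}}$, and a mere qualitative $o(1)$ in front of it cannot produce $o(\varepsilon^{\frac{(N-4)\theta}{2k}})$ (the case $N=12$ fails similarly because of the $|\ln\varepsilon|$). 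The paper does not rely on such a soft bound: it splits (see (\ref{e4.8})--(\ref{e4.11}))
\[
\int_{\Omega_\varepsilon}f'(V)\phi\,\varepsilon^{\frac{(2j-1)\theta}{2k}}PZ_j^h
=\int f'(V)\phi\,\varepsilon^{\frac{(2j-1)\theta}{2k}}(PZ_j^h-Z_j^h)
+\int\big[f'(V)-f'(U_j)\big]\phi\,\varepsilon^{\frac{(2j-1)\theta}{2k}}Z_j^h,
\]
and estimates each summand with a \emph{quantitative} small factor (of size $O(\mu_{j\varepsilon}^{\frac{N-4}{2}})$ from Lemma~\ref{lem6.1} for the first, and from the interaction estimates of Lemmas~\ref{lem6.8}/\ref{lem7.0} for the second). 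Only with these explicit powers of $\varepsilon$ does the product with $\|\phi\|$ land in $o(\varepsilon^{\frac{(N-4)\theta}{2k}})$. To fix your argument you must replace the qualitative $o(1)$ by the concrete rate $\|\,i^*[f'(V)\partial_sV]-\partial_sV\,\|=O(\varepsilon^{\frac{(N-4)\theta}{4k}})$ (or finer), which is exactly the content of the paper's $H_1$--$H_2$ decomposition.
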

\begin{proof}
We will show that
\begin{align}\label{c0}
J_\varepsilon(V+\phi)-J_\varepsilon(V)=o(\varepsilon^{\frac{(N-4)\theta}{2k}})
\end{align}
and
\begin{align}\label{c1}
\nabla_{\mu, \sigma}[J_\varepsilon(V+\phi)-J_\varepsilon(V)]=o(\varepsilon^{\frac{(N-4)\theta}{2k}}).
\end{align}
Indeed, we have
\begin{align}\label{e4.4}
J_\varepsilon(V+\phi)-J_\varepsilon(V)
=&\frac{1}{2}\|\phi\|^2-\int_{\Omega_\varepsilon}\big(f(V)-\sum_{j-1}^k(-1)^{j+1}f(PU_j)\big)\phi
\nonumber\\
&-\int_{\Omega_\varepsilon}\big(F(V+\phi)-F(V)-f(V)\phi\big).
\end{align}
We note that $\|\phi\|^2=o(\varepsilon^{\frac{(N-4)\theta}{2k}})$ by Proposition \ref{pro3.1}. By H\"{o}lder inequality and Lemma \ref{lem4.4}, we have
\begin{align}\label{e4.5}
\left|\int_{\Omega_\varepsilon}[f(V)-\sum_{j=1}^k(-1)^{j+1}f(PU_j)]\phi\right|\leq & c|f(V)-\sum_{j=1}^k(-1)^{j+1}f(PU_j)|_{\frac{2N}{N+4}}|\phi|_{\frac{2N}{N-4}}
=o(\varepsilon^{\frac{(N-4)\theta}{2k}}).
\end{align}
Moreover, by the mean value Theorem and H\"{o}lder inequality,  for some $t\in (0,1)$,
\begin{align}\label{e4.6}
&\left|\int_{\Omega_\varepsilon}(F(V+\phi)-F(V)-f(V)\phi)\right|
\leq  c\int_{\Omega_\varepsilon}|f'(V+t\phi)\phi^2|
\nonumber\\
\leq &c\int_{\Omega_\varepsilon}|V|^{p-1}\phi^2+\int_{\Omega_\varepsilon}|\phi|^{p+1}
\leq c\left||V|^{p-1}\right|_{\frac{N}{4}}|\phi|^2_{\frac{2N}{N-4}}+c|\phi|^{p+1}_{\frac{2N}{N-4}}
=o(\varepsilon^{\frac{(N-4)\theta}{2k}}).
\end{align}
Thus (\ref{c0}) holds.

We next prove (\ref{c1}).
We have
\begin{align}\label{e4.7}
\nabla J_\varepsilon(V+\phi)-\nabla J_\varepsilon(V)
=[J'(V+\phi)-J'(V)][\nabla V]+J'(V+\phi)[\nabla \phi].
\end{align}
By (\ref{e3.43}), $\partial_sV$ is a linear combination of $\varepsilon^{\frac{(2j-1)\theta}{2k}}PZ^h_j$ with coefficients uniformly bounded as $\varepsilon \rightarrow 0$ for any  $\mu$, $\sigma$ satisfying (\ref{e2.8}).  Thus, fix $j$,
\begin{align}\label{e4.8}
&[J'(V+\phi)-J'(V)][\varepsilon^{\frac{(2j-1)\theta}{2k}}PZ^h_j]
\nonumber\\
=&-\int_{\Omega_\varepsilon}f'(V)\phi\varepsilon^{\frac{(2j-1)\theta}{2k}}[PZ^h_j-Z^h_j]-\int_{\Omega_\varepsilon}[f'(V)-f'(U_j)]\phi\varepsilon^{\frac{(2j-1)\theta}{2k}}Z^h_j
\nonumber\\
&-\int_{\Omega_\varepsilon}[f(V+\phi)-f(V)-f'(V)\phi]\varepsilon^{\frac{(2j-1)\theta}{2k}}PZ^h_j
:=H_1+H_2+H_3.
\end{align}
By H\"{o}lder inequality and Proposition \ref{pro3.1}, it follows that
\begin{align}\label{e4.9}
|H_1|\leq c|f'(V)|_{\frac{N}{4}}|\phi|_{\frac{2N}{N-4}}|PZ^h_j-Z^h_j|_{\frac{2N}{N-4}}\varepsilon^{\frac{(2j-1)\theta}{2k}}=o(\varepsilon^{\frac{(N-4)\theta}{2k}}).
\end{align}
Since $|\varepsilon^{\frac{(2j-1)\theta}{2k}}PZ^h_j|\leq |\varepsilon^{\frac{(2j-1)\theta}{2k}}Z^h_j|=c\frac{\varepsilon^{\frac{(2j-1)\theta}{2k}}\mu_{j\varepsilon}^{\frac{N-4}{2}}|x^h-\xi_j^h|}{(\mu_{j\varepsilon}^2+|x-\xi_j|^2)^{\frac{N-2}{2}}}
=cU_j\frac{|x^h-\xi_j^h|}{\mu_{j\varepsilon}^2+|x-\xi_j|^2}
\leq cU_j$,
we get
\begin{align}\label{e4.10}
|H_2|\leq  c\int_{\Omega_\varepsilon}|V^{p-1}-U_j^{p-1}||\phi|U_j
=c\left(\int_{\Omega_\varepsilon\setminus B(\xi_0,r)}+\int_{A_j}+\sum_{i\neq j,i=1}^k\int_{A_i}\right)|V^{p-1}-U_j^{p-1}||\phi|U_j,
\end{align}
where
$$|\int_{\Omega_\varepsilon\setminus B(\xi_0,r)}|V^{p-1}-U_j^{p-1}||\phi|U_j|\leq c\left||V^{p-1}-U_j^{p-1}|U_j\right|_{\frac{2N}{N+4}}|\phi|_{\frac{2N}{N-4}}=o(\varepsilon^{\frac{(N-4)\theta}{2k}}).
$$

If $N\geq 13$, we have
\begin{align}\label{e4.11}
&\int_{A_j}|V^{p-1}-U_j^{p-1}||\phi|U_j\leq  c\int_{A_j}U_j^{p-1}|(PU_j-U_j)+\sum_{i\neq j,i=1}^kPU_i||\phi|
\nonumber\\
\leq &c|U_j^{p-1}|_{\frac{N}{4}}|PU_j-U_j|_{\frac{2N}{N-4}}|\phi|_{\frac{2N}{N-4}}
+c\sum_{i\neq j,i=1}^k|U_j^{p-1}|_{\frac{N}{4}}|U_i|_{\frac{2N}{N-4}}|\phi|_{\frac{2N}{N-4}}
=o(\varepsilon^{\frac{(N-4)\theta}{2k}}).
\end{align}
The case $ 5\leq N\leq 12$ can be treated similarly. Moreover
$$\sum_{i\neq j,i=1}^k\int_{A_i}|V^{p-1}-U_j^{p-1}||\phi|U_j=o(\varepsilon^{\frac{(N-4)\theta}{2k}}).$$
Thus, we have $|H_2|\leq o(\varepsilon^{\frac{(N-4)\theta}{2k}})$.

Next we estimate $H_3$. By the mean value Theorem, for some $t\in [0,1]$,
\begin{align}\label{e4.12}
|H_3|\leq &\int_{\Omega_\varepsilon}|f''(V+t\phi)\phi^2|
\leq c\int_{\Omega_\varepsilon}|V|^{p-1}\phi^2+c\int_{\Omega_\varepsilon}|\phi|^{p+1}
\nonumber\\
\leq &c\left||V|^{p-1}\right|_{\frac{N}{4}}|\phi|_{\frac{2N}{N-4}}^2+c|\phi|_{\frac{2N}{N-4}}^{p+1}
=o(\varepsilon^{\frac{(N-4)\theta}{2k}}).
\end{align}
Then $[J'(V+\phi)-J'(V)][\varepsilon^{\frac{(2j-1)\theta}{2k}}PZ^h_j]=o(\varepsilon^{\frac{(N-4)\theta}{2k}})$, namely, $[J'(V+\phi)-J'(V)][\nabla V]=o(\varepsilon^{\frac{(N-4)\theta}{2k}})$.

Finally, by (\ref{e2.9})
\begin{align}\label{e4.13}
J'(V+\phi)[\nabla \phi]=\sum_{l=0}^N\sum_{i=1}^kc_i^l\langle PZ^l_i,\nabla \phi \rangle.
\end{align}
Since $|\langle PZ^l_i,\nabla \phi \rangle|\leq c|Z^l_i|_{\frac{2N}{N-4}}|\phi|_{\frac{2N}{N-4}}\leq c\mu_{i\varepsilon}^{-1}|\phi|_{\frac{2N}{N-4}}$ and $|c_i^l|\leq c\mu_{i\varepsilon}\|\phi\|$, we deduce
\begin{align}\label{e4.15}
|c_i^l\langle PZ^l_i,\nabla \phi \rangle|\leq c|\phi|_{\frac{2N}{N-4}}\|\phi\|\leq c\|\phi\|^2.
\end{align}
Thus $|J'(V+\phi)[\nabla \phi]|=O(\|\phi\|^2)=o(\varepsilon^{\frac{(N-4)\theta}{2k}}).$ Then we finish the proof.
\end{proof}

\begin{lemma}\label{lem6.5}
It holds
\begin{align}\label{J0}
\sum_{j=1}^k J_{\varepsilon}(PU_{j})=&\frac{2kc_1}{N}\alpha_N^{p+1}+\frac{\alpha_N^{p+1}}{2}\left(c_2H(\xi_0,\xi_0)\mu_1^{N-4}+\frac{c_3\Delta U_{1,0}(\sigma_k)U_{1,0}(\sigma_k)}{\mu_k^{N-2}}\right)\varepsilon^{\frac{N-4}{2k}\theta}
+o(\varepsilon^{\frac{N-4}{2k}\theta}),
\end{align}
where $c_1$, $c_2$ and $c_3$ are given in (\ref{e4.17}).
\end{lemma}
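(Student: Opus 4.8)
The plan is to expand each term $J_\varepsilon(PU_j)$ separately and then sum; since $\sum_{j}J_\varepsilon(PU_j)$ contains no cross‑interactions by construction, treating one projected bubble at a time suffices. Write $U_j:=U_{\mu_{j\varepsilon},\xi_{j\varepsilon}}$, $PU_j:=P_\varepsilon U_{\mu_{j\varepsilon},\xi_{j\varepsilon}}$ and $\varphi_j:=U_j-PU_j\ge 0$. From $\Delta^2 PU_j=U_j^p$ and the Navier conditions, two integrations by parts give $\|PU_j\|^2=\int_{\Omega_\varepsilon}U_j^p PU_j$, hence
\begin{align*}
J_\varepsilon(PU_j)&=\frac12\int_{\Omega_\varepsilon}U_j^p PU_j-\frac1{p+1}\int_{\Omega_\varepsilon}(PU_j)^{p+1}\\
&=\frac2N\int_{\Omega_\varepsilon}U_j^{p+1}+\frac12\int_{\Omega_\varepsilon}U_j^p\varphi_j+O\!\Big(\int_{\Omega_\varepsilon}U_j^{p-1}\varphi_j^2\Big),
\end{align*}
using $0\le\varphi_j\le U_j$ in the concentration region (maximum principle for the biharmonic Navier problem), Taylor's formula for $(U_j-\varphi_j)^{p+1}$ (note $p+1=\frac{2N}{N-4}>2$), the identity $\frac12-\frac1{p+1}=\frac2N$, and the fact that $(PU_j)^{p+1}$ and $U_j^{p+1}$ are negligible outside a fixed ball $B(\xi_0,r)$ and inside $B(\xi_0,\varepsilon)$. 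The first term is the main term; the second carries the correction.

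For the leading term, scaling $x-\xi_{j\varepsilon}=\mu_{j\varepsilon}z$ gives $\int_{\mathbb R^N}U_j^{p+1}=\alpha_N^{p+1}\int_{\mathbb R^N}(1+|z|^2)^{-N}\,dz=\alpha_N^{p+1}c_1$, independently of $j$, while the truncation error $\int_{\mathbb R^N\setminus\Omega_\varepsilon}U_j^{p+1}$ consists of an exterior tail of size $O(\mu_{j\varepsilon}^{(N+4)/2})$ and an interior part over $B(\xi_0,\varepsilon)$ of size $O((\varepsilon/\mu_{k\varepsilon})^{N})$. With $\mu_{j\varepsilon}=\mu_j\varepsilon^{\frac{2j-1}{2k}\theta}$ and $\theta=\frac{2k(N-2)}{2k(N-2)-2}$ an exponent count shows both are $o(\varepsilon^{\frac{(N-4)\theta}{2k}})$; summing over $j=1,\dots,k$ produces $\frac{2k}{N}\alpha_N^{p+1}c_1$. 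The $O\!\big(\int U_j^{p-1}\varphi_j^2\big)$ term and the analogous truncation remainders are controlled by the same kind of estimates as in the proof of Lemma \ref{lem4.4} and in the Appendix, and are $o(\varepsilon^{\frac{(N-4)\theta}{2k}})$.

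The correction $\frac12\sum_j\int_{\Omega_\varepsilon}U_j^p\varphi_j$ is the heart of the matter. I would insert the decomposition of Lemma \ref{lem6.1} (applied with $\mu=\mu_{j\varepsilon}$, $\sigma=\sigma_j$): $\varphi_j=\alpha_N\mu_{j\varepsilon}^{\frac{N-4}{2}}H(\cdot,\xi_{j\varepsilon})+a_1\varphi_1(\tfrac{\cdot-\xi_0}{\varepsilon})+a_2\varphi_2(\tfrac{\cdot-\xi_0}{\varepsilon})-R_j$, with $a_1,a_2$ from (\ref{e6.2})--(\ref{e6.3}) and $R_j$ controlled by (\ref{e6.4}). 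Since $\int_{\Omega_\varepsilon}U_j^p=\alpha_N^p c_2\,\mu_{j\varepsilon}^{\frac{N-4}{2}}(1+o(1))$ with mass concentrating at $\xi_{j\varepsilon}\to\xi_0$, the $H$‑part contributes $\frac12\alpha_N^{p+1}c_2 H(\xi_0,\xi_0)\mu_{j\varepsilon}^{N-4}(1+o(1))$; as $\mu_{j\varepsilon}^{N-4}=\mu_j^{N-4}\varepsilon^{\frac{(2j-1)(N-4)\theta}{2k}}$, only $j=1$ is of order $\varepsilon^{\frac{(N-4)\theta}{2k}}$, giving $\frac{\alpha_N^{p+1}}{2}c_2 H(\xi_0,\xi_0)\mu_1^{N-4}\varepsilon^{\frac{(N-4)\theta}{2k}}$. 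For the hole‑part $a_1\varphi_1+a_2\varphi_2$ and for $R_j$, the same scaling count leaves only $j=k$; scaling $x-\xi_{k\varepsilon}=\mu_{k\varepsilon}w$ in $\int U_k^p\varphi_i(\tfrac{\cdot-\xi_0}{\varepsilon})$ reduces these, at leading order, to the convolution integrals $\int_{\mathbb R^N}U_{1,0}^p(w-\sigma_k)|w|^{4-N}\,dw$ and $\int_{\mathbb R^N}U_{1,0}^p(w-\sigma_k)|w|^{2-N}\,dw$; since $|w|^{4-N}$ and $|w|^{2-N}$ are (multiples of) the fundamental solutions of $\Delta^2$ and $\Delta$, and $\Delta^2U_{1,0}=U_{1,0}^p$, a Liouville argument turns these into scalar multiples of $U_{1,0}(\sigma_k)$ and $\Delta U_{1,0}(\sigma_k)$, respectively. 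Combined with the explicit $a_1,a_2$ this collapses to $\frac{\alpha_N^{p+1}}{2}c_3\,\Delta U_{1,0}(\sigma_k)U_{1,0}(\sigma_k)\,(\varepsilon/\mu_{k\varepsilon})^{N-2}$, where the factor $meas(\mathbb S^{N-1})$ in $c_3$ enters through the surface measure of $\partial B(\xi_0,\varepsilon)$ — equivalently, through the constant relating $\Delta|w|^{2-N}$ to $\delta_0$. Finally $(\varepsilon/\mu_{k\varepsilon})^{N-2}=\mu_k^{-(N-2)}\varepsilon^{\frac{(N-4)\theta}{2k}}$, once more by the choice of $\theta$. (Alternatively, this term follows from the boundary identity $\int_{\Omega_\varepsilon}U_k^p\varphi_k=\int_{\partial\Omega_\varepsilon}\Delta U_k\,\partial_\nu PU_k$, obtained by two integrations by parts using $\Delta^2U_k=U_k^p$ and $\Delta^2\varphi_k=0$, whose dominant contribution comes from $\partial B(\xi_0,\varepsilon)$ after a second‑order expansion of the trace and of the normal derivative there.)

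Collecting the main term with the two surviving corrections, and discarding the rest — the $H$‑part for $j\ge2$, the hole‑part for $j\le k-1$, the remainder $R_j$ (via (\ref{e6.4})), and the quadratic error $\int U_j^{p-1}\varphi_j^2$, all $o(\varepsilon^{\frac{(N-4)\theta}{2k}})$ by the scaling bookkeeping together with the Appendix estimates — yields (\ref{J0}). I expect the main obstacle to be the fine analysis near the hole that fixes the exact value of $c_3$: one must keep the second‑order Taylor expansions of $U_k$, $\Delta U_k$ and $\partial_\nu PU_k$ on $\partial B(\xi_0,\varepsilon)$, track the interference of the two hole corrections $a_1\varphi_1$ and $a_2\varphi_2$, and check that every remaining term is genuinely of lower order — which hinges on $\theta$ having been chosen precisely so that $\mu_{1\varepsilon}^{N-4}$ (the scale dictated by $\partial\Omega$) and $(\varepsilon/\mu_{k\varepsilon})^{N-2}$ (the scale dictated by the hole) both equal a constant times $\varepsilon^{\frac{(N-4)\theta}{2k}}$. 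This is exactly the region‑around‑the‑hole estimate isolated in \cite{Ala} for a single bubble, which I would follow closely, adapting it to the $k$ scales $\mu_{1\varepsilon},\dots,\mu_{k\varepsilon}$.
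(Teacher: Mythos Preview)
Your approach is essentially the same as the paper's: the identical Taylor decomposition $J_\varepsilon(PU_j)=\frac{2}{N}\int U_j^{p+1}+\frac12\int U_j^p\varphi_j+O\!\big(\int U_j^{p-1}\varphi_j^2\big)$, the same scale counting to single out $j=1$ for the $H$-contribution and $j=k$ for the hole contribution, and the same appeal to the single-bubble analysis of \cite{Ala} for the delicate constant $c_3$. The paper is more compressed, simply quoting \cite[Proposition 3.3]{Ala} for the exact expansions of $\int U_j^{p+1}$ and $\int U_j^p(PU_j-U_j)$ and invoking Lemma \ref{lem6.2} for the quadratic remainder, whereas you sketch how those computations go via Lemma \ref{lem6.1} and the fundamental-solution identities; both routes lead to the same place. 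One small slip: the exterior tail $\int_{\Omega\setminus B(\xi_0,r)}U_j^{p+1}$ is $O(\mu_{j\varepsilon}^{N})$, not $O(\mu_{j\varepsilon}^{(N+4)/2})$, but this is still $o(\varepsilon^{\frac{(N-4)\theta}{2k}})$ and does not affect the argument.
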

\begin{proof}
Let $U_{j}:=U_{\mu_{j\varepsilon},\xi_{j\varepsilon}}$, $PU_{j}:=P_{\varepsilon}U_{\mu_{j\varepsilon},\xi_{j\varepsilon}}$. Since $PU_{j}$ satisfies (\ref{PU}), we get, for some $t\in [0,1]$,
\begin{align}\label{e6.6}
J_{\varepsilon}(PU_{j})=&\frac{1}{2}\int_{\Omega_\varepsilon}U_{j}^p PU_{j}-\frac{1}{p+1}\int_{\Omega_\varepsilon}PU_{j}^{p+1}
\nonumber\\
=&\frac{2}{N}\int_{\Omega_\varepsilon}U_{j}^{p+1}-\frac{1}{2}\int_{\Omega_\varepsilon}U_{j}^p (PU_{j}-U_{j})-\frac{p}{2}\int_{\Omega_\varepsilon}[U_{j}+t(PU_{j}-U_{j})]^{p-1}[PU_{j}-U_{j}]^2
\nonumber\\
:=&I_1+I_2+I_3.
\end{align}
From \cite[Proposition 3.3]{Ala}, we have
\begin{align}\label{I1}
I_1
=\frac{2}{N}\alpha_N^{p+1}\int_{\mathbb{R}^N}\left(\frac{1}{1+|z|^2}\right)^N+O\left(\varepsilon^{\frac{2j-1}{2k} \theta N}\right),
\end{align}
\begin{align}\label{I2}
I_2=&\frac{\alpha_N^{p+1}}{2}\int_{\mathbb{R}^N}\frac{1}{(1+|z|^2)^{\frac{N+4}{2}}}H(\xi_0,\xi_0){\mu_{j}^{N-4}}\varepsilon^{\frac{2j-1}{2k}\theta(N-4)}(1+o(1))
\nonumber\\
&+\left(\frac{\varepsilon}{\mu_{j\varepsilon}}\right)^{N-2}\left(\frac{-3(N-2)}{4}meas(\mathbb{S}^{N-1})\Delta U_{1,0}(\sigma_{j})U_{1,0}(\sigma_{j})+o(1)\right),
\end{align}
and
\begin{align}\label{I3}
|I_3|=O\left(\mu_{j\varepsilon}^{N}+\frac{\varepsilon^{N}}{\mu_{j\varepsilon}^{N}}\right)=o\left(\varepsilon^{\frac{2j-1}{2k}\theta(N-4)}\right).
\end{align}
From (\ref{e6.6})-(\ref{I3}), we get (\ref{J0}).

Next, we give the $C^1$ expansion of $\sum_{j=1}^k J_{\varepsilon}(PU_{j})$.
Let $\partial_s$ denote $\partial_{\mu_j}$ for $j= 1,\cdots,k$ and $\partial_{\sigma^i_r}$ for $r=1,\cdots,k-1$ and $i=1,\cdots,N$.
By a standard argument, we have
\begin{align}\label{eq6.51}
\partial_s J_{\varepsilon}(PU_{j})
=&\partial_s\left(-\int_{\Omega_\varepsilon}|U_{j}|^{p}(PU_{j}-U_{j})\right)+\int_{\Omega_\varepsilon}|U_{j}|^{p}\partial_s(PU_{j}-U_{j})
\nonumber\\
&+O\left(\int_{\Omega_\varepsilon}|U_{j}|^{p-2}U_{j}(PU_{j}-U_{j})^2\right).
\end{align}
Using Lemma \ref{lem6.7}, then for $\partial_s=\partial_{\mu_j}$, $j=1,\cdots,k$, we have
\begin{align}\label{eq6.52}
\partial_s\left(-\int_{\Omega_\varepsilon}|U_{j}|^{p}(PU_{j}-U_{j})\right)
=&\partial_{\mu_{j}}\bigg(\alpha_N^{p+1}\int_{\Omega_\varepsilon}\frac{1}{(1+|z|^2)^{\frac{N+4}{2}}}H(\xi_0,\xi_0)\mu_{j\varepsilon}^{N-4}(1+o(1))
\nonumber\\
-&\frac{3(N-2)}{2}{meas}(\mathbb{S}^{N-1})\Delta U_{1,0}(\sigma)U_{1,0}(\sigma)(\frac{\varepsilon}{\mu_{j\varepsilon}})^{N-2}(1+o(1))\bigg)
\nonumber\\
=&\frac{(N-4)\alpha_N^{p+1}}{\mu_j}\int_{\Omega_\varepsilon}\frac{1}{(1+|z|^2)^{\frac{N+4}{2}}}H(\xi_0,\xi_0)\mu_{j\varepsilon}^{N-4}(1+o(1))
\nonumber\\
+&\frac{3(N-2)^2}{2\mu_j}{meas}(\mathbb{S}^{N-1})\Delta U_{1,0}(\sigma)U_{1,0}(\sigma)(\frac{\varepsilon}{\mu_{j\varepsilon}})^{N-2}(1+o(1)).
\end{align}
By Lemma \ref{lem6.1}, we have
\begin{align*}
&\int_{\Omega_\varepsilon}|U_{j}|^{p}\partial_s(PU_{j}-U_{j})\\
=&\int_{\Omega_\varepsilon}|U_{j}|^{p}\partial_{\mu_{j}}\bigg(-\alpha_N\mu_{j\varepsilon}^{\frac{N-4}{2}}H(x,\xi_{j\varepsilon})-a_1 \varphi_1(\frac{x-\xi_0}{\varepsilon})-a_2\varphi_2(\frac{x-\xi_0}{\varepsilon})
+R_{\varepsilon}(x)\bigg).
\end{align*}
By a direct computation, we find
\begin{align*}
\int_{\Omega_\varepsilon}|U_{j}|^{p}\partial_{\mu_{j}}\left(-\alpha_N\mu_{j\varepsilon}^{\frac{N-4}{2}}H(x,\xi_{j\varepsilon})\right)
=-\frac{(N-4)\alpha_N^{p+1}}{2\mu_j}\int_{\Omega_\varepsilon}\frac{1}{(1+|z|^2)^{\frac{N+4}{2}}}H(\xi_0,\xi_0)\mu_{j\varepsilon}^{N-4}(1+o(1)).
\end{align*}
Let $x-\xi_j=\mu_{j\varepsilon}z$, then
\begin{align*}
&\int_{\Omega_\varepsilon}|U_{j}|^{p}\partial_{\mu_{j}}\left(-a_1 \varphi_1(\frac{x-\xi_0}{\varepsilon})\right)
\nonumber\\
=&\int_{\bar{{\Omega}}_\varepsilon}\left(\frac{\mu_{j\varepsilon}}{\mu_{j\varepsilon}^2+|\mu_{j\varepsilon}z|^2}\right)^{\frac{N+4}{2}}\mu_{j\varepsilon}^N\partial_{\mu_{j}}\left(\frac{\Delta U_{1,0}(\sigma)}{2(N-4)}\frac{\varepsilon^{N-2}}{\mu_{j\varepsilon}^{\frac{N}{2}}}\frac{1}{|\mu_{j\varepsilon}(z+\sigma_j)|^{N-4}}\right)
\nonumber\\
=&\frac{3N-8}{2\mu_{j}}\left(\frac{\varepsilon}{\mu_{j\varepsilon}}\right)^{N-2}
\left(-\frac{N-2}{2}{meas}(\mathbb{S}^{N-1})\Delta U_{1,0}(\sigma)U_{1,0}(\sigma)+o(1)\right),
\end{align*}
and
\begin{align*}
&\int_{\Omega_\varepsilon}|U_{j}|^{p}\partial_{\mu_{j}}\left(-a_2\varphi_2(\frac{x-\xi_0}{\varepsilon})\right)
\nonumber\\
=&\int_{\tilde{\Omega}_\varepsilon}\frac{\mu_{j\varepsilon}^{\frac{N-4}{2}}}{(1+|z|^2)^{\frac{N+4}{2}}}\partial_{\mu_{j}}\left(-\frac{\Delta U_{1,0}(\sigma)}{2(N-4)}\frac{\varepsilon^{N}}{\mu_{j\varepsilon}^{\frac{3N-4}{2}}}\frac{1}{|z+\sigma_j|^{N-2}}-U_{1,0}(\sigma)\frac{\varepsilon^{N-2}}{\mu_{j\varepsilon}^{\frac{3N-8}{2}}}\frac{1}{|z+\sigma_j|^{N-2}}\right)
\nonumber\\
=&\frac{3N-8}{2\mu_{j}}\left(\frac{\varepsilon}{\mu_{j\varepsilon}}\right)^{N-2}\left(-(N-2){meas}(\mathbb{S}^{N-1})\Delta U_{1,0}(\sigma)U_{1,0}(\sigma)+o(1)\right).
\end{align*}
From Lemma \ref{lem6.1}, we have that $\left|\int_{\Omega_\varepsilon}|U_{j}|^{p}\partial_{\mu_{j}}R_{\varepsilon}(x)\right|\leq C\frac{1}{\mu_{j\varepsilon}}\left|\int_{\Omega_\varepsilon}|U_{j}|^{p}R_{\varepsilon}(x)\right|=O\left(\left(\frac{\varepsilon}{\mu_{j\varepsilon}}\right)^{N-2}\right).$
Then
\begin{align}\label{eq6.53}
\int_{\Omega_\varepsilon}|U_{j}|^{p}\partial_{\mu_{j}}(PU_{j}-&U_{j})
=\frac{-(N-4)\alpha_N^{p+1}}{2\mu_j}\int_{\Omega_\varepsilon}\frac{1}{(1+|z|^2)^{\frac{N+4}{2}}}H(\xi_0,\xi_0)\mu_{j\varepsilon}^{N-4}(1+o(1))
\nonumber\\
&-\frac{3(N-4)}{4\mu_{j}}\left(\frac{\varepsilon}{\mu_{j\varepsilon}}\right)^{N-2}\left((N-2){meas}(\mathbb{S}^{N-1})\Delta U_{1,0}(\sigma)U_{1,0}(\sigma)+o(1)\right).
\end{align}
By Lemma \ref{lem6.2}, we get $O\left(\int_{\Omega_\varepsilon}p|U_{j}|^{p-2}U_{j}(PU_{j}-U_{j})^2\right)=O\left(\mu_{j\varepsilon}^N+\left(\frac{\varepsilon}{\mu_{j\varepsilon}}\right)^{N}\right),$
then combining this estimate with (\ref{eq6.51}), (\ref{eq6.52}) and (\ref{eq6.53}), we obtain
\begin{align*}
\partial_{\mu_j} J_{\varepsilon}(PU_{j})=&\frac{(N-4)\alpha_N^{p+1}}{2\mu_j}\int_{\Omega_\varepsilon}\frac{1}{(1+|z|^2)^{\frac{N+4}{2}}}H(\xi_0,\xi_0)\mu_{j\varepsilon}^{N-4}(1+o(1))
\nonumber\\
&+\frac{3(N-2)(N-4)}{4\mu_j}{meas}(\mathbb{S}^{N-1})\Delta U_{1,0}(\sigma_j)U_{1,0}(\sigma_j)(\frac{\varepsilon}{\mu_{j\varepsilon}})^{N-2}(1+o(1))
\nonumber\\
= &\partial_{\mu_j}\left(\frac{\alpha_N^{p+1}}{2}\left(c_2H(\xi_0,\xi_0)\mu_{j\varepsilon}^{N-4}+c_3\frac{\Delta U_{1,0}(\sigma_j)U_{1,0}(\sigma_j)\varepsilon^{N-2}}{\mu_{j\varepsilon}^{N-2}}\right)\right)+o(\varepsilon^{\frac{N-4}{2k}\theta}).
\end{align*}
As the same way, for $j=1,\cdots,N$ and $r=1,\cdots,k-1$, we have
\begin{align*}
\partial_{\sigma_r^j}\left(J_\varepsilon(PU_{j})\right)=\partial_{\sigma_r^j}\left(\frac{\alpha_N^{p+1}}{2}\left(c_2H(\xi_0,\xi_0)\mu_{j\varepsilon}^{N-4}+c_3\frac{\Delta U_{1,0}(\sigma_j)U_{1,0}(\sigma_j)\varepsilon^{N-2}}{\mu_{j\varepsilon}^{N-2}}\right)\right)+o(\varepsilon^{\frac{N-4}{2k}\theta}).
\end{align*}
\end{proof}

\begin{lemma}\label{thm4.3}
For $d>0$ small but fixed, there exist $\varepsilon_0>0$ and $c>0$ such that  for any $\varepsilon \in (0, \varepsilon_0)$,
\begin{align}\label{e4.16}
J_\varepsilon (V)=&\frac{2kc_1}{N}\alpha_N^{p+1}+\frac{\alpha_N^{p+1}}{2}\Bigg\{c_2H(\xi_0,\xi_0)\mu_1^{N-4}+c_3\frac{\Delta U_{1,0}(\sigma_k)U_{1,0}(\sigma_k)}{\mu_k^{N-2}}
\nonumber\\
&+2\sum_{l=1}^{k-1}\Gamma(\sigma_l)(\frac{\mu_{l+1}}{\mu_{l}})^{\frac{N-4}{2}}\Bigg\}\varepsilon^{\frac{N-4}{2k}\theta}+o(\varepsilon^{\frac{N-4}{2k}\theta})
\end{align}
$C^1$-uniformly with respect to $\mu_j$, $\sigma_j$ satisfying (\ref{e2.8}). Here
$c_1$, $c_2$ and function $\Gamma$ are defined in Proposition \ref{pro3.4}.
\end{lemma}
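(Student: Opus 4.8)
\noindent{\it Proof strategy of Lemma \ref{thm4.3}.} Write $U_j:=U_{\mu_{j\varepsilon},\xi_{j\varepsilon}}$ and $PU_j:=P_\varepsilon U_{\mu_{j\varepsilon},\xi_{j\varepsilon}}$. The plan is to reduce the expansion of $J_\varepsilon(V)$ to the single--bubble expansions of Lemma \ref{lem6.5}, plus the interaction of \emph{consecutive} bubbles in the tower. Using $\Delta^2 PU_j=U_j^p$ and the Navier boundary condition, one has $\langle PU_i,PU_j\rangle=\int_{\Omega_\varepsilon}U_i^pPU_j=\int_{\Omega_\varepsilon}U_j^pPU_i$, hence
\begin{equation*}
\tfrac12\|V\|^2=\tfrac12\sum_{i,j=1}^{k}(-1)^{i+j}\langle PU_i,PU_j\rangle,
\end{equation*}
whose diagonal reproduces $\sum_{j}\tfrac12\int_{\Omega_\varepsilon}U_j^pPU_j$. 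I would then split $\Omega_\varepsilon$ as in (\ref{e4.27})--(\ref{e4.28}) — on $A_l$ only $U_l$ is of order one — expand $|V|^{p+1}$ on each $A_l$ around $PU_l$, and reassemble the diagonal quadratic term with the diagonal nonlinear term into $\sum_{j}J_\varepsilon(PU_j)$. This yields
\begin{equation*}
J_\varepsilon(V)=\sum_{j=1}^{k}J_\varepsilon(PU_j)+\sum_{l=1}^{k-1}\mathcal{I}_l+o\!\big(\varepsilon^{\frac{N-4}{2k}\theta}\big),
\end{equation*}
where $\mathcal{I}_l$ is the $PU_l$--$PU_{l+1}$ interaction; the non--adjacent interactions, the region $\Omega_\varepsilon\setminus B(\xi_0,r)$, and the contributions of the projection remainders (controlled by Lemma \ref{lem6.1}) are all $o(\varepsilon^{\frac{N-4}{2k}\theta})$ by the choice (\ref{e2.6}) of $\theta$ together with the estimates of Lemmas \ref{lem6.6} and \ref{lem6.8}.

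For the first sum I would quote Lemma \ref{lem6.5}: since $\mu_{j\varepsilon}^{N-4}=\mu_j^{N-4}\varepsilon^{\frac{2j-1}{2k}\theta(N-4)}$ and $(\varepsilon/\mu_{j\varepsilon})^{N-2}=\mu_j^{-(N-2)}\varepsilon^{(N-2)(1-\frac{2j-1}{2k}\theta)}$, the value $\theta=\frac{2k(N-2)}{2k(N-2)-2}$ makes the first exponent equal $\frac{N-4}{2k}\theta$ only for $j=1$ and the second equal $\frac{N-4}{2k}\theta$ only for $j=k$, all other terms being $o(\varepsilon^{\frac{N-4}{2k}\theta})$; this produces precisely the $c_2H(\xi_0,\xi_0)\mu_1^{N-4}$ and $c_3\,\Delta U_{1,0}(\sigma_k)U_{1,0}(\sigma_k)\,\mu_k^{-(N-2)}$ terms. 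For $\mathcal{I}_l$, the off--diagonal part of $\tfrac12\|V\|^2$ contributes $-\langle PU_l,PU_{l+1}\rangle$, while the cross terms of $-\tfrac{1}{p+1}\int_{\Omega_\varepsilon}|V|^{p+1}$ localized in $A_l$ and in $A_{l+1}$ each contribute $+\langle PU_l,PU_{l+1}\rangle$ up to lower order (here the identity $\int U_l^pPU_{l+1}=\int U_{l+1}^pPU_l$ is used), so that $\mathcal{I}_l=\langle PU_l,PU_{l+1}\rangle\,(1+o(1))$. Rescaling $x-\xi_{l\varepsilon}=\mu_{l\varepsilon}y$, so that $U_{l+1}$ collapses to its far field $\alpha_N\mu_{(l+1)\varepsilon}^{(N-4)/2}|x-\xi_0|^{-(N-4)}$, gives
\begin{equation*}
\langle PU_l,PU_{l+1}\rangle=\alpha_N^{p+1}\,\Gamma(\sigma_l)\,\Big(\frac{\mu_{(l+1)\varepsilon}}{\mu_{l\varepsilon}}\Big)^{\frac{N-4}{2}}(1+o(1))=\alpha_N^{p+1}\,\Gamma(\sigma_l)\,\Big(\frac{\mu_{l+1}}{\mu_{l}}\Big)^{\frac{N-4}{2}}\varepsilon^{\frac{N-4}{2k}\theta}(1+o(1)),
\end{equation*}
since $\frac{2(l+1)-1}{2k}\theta-\frac{2l-1}{2k}\theta=\frac{\theta}{k}$ and $\frac{\theta}{k}\cdot\frac{N-4}{2}=\frac{N-4}{2k}\theta$, with $\Gamma$ the kernel in (\ref{e4.20}). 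Summing over $l$ and extracting the common prefactor $\frac{\alpha_N^{p+1}}{2}$ produces the factor $2$ in front of $\sum_{l=1}^{k-1}\Gamma(\sigma_l)(\mu_{l+1}/\mu_l)^{\frac{N-4}{2}}$, which gives (\ref{e4.16}).

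The $C^1$--uniformity is obtained by differentiating the same decomposition in $\mu_j$ and $\sigma_r^j$. The $C^1$--expansion of $\sum_jJ_\varepsilon(PU_j)$ is the $C^1$ part of Lemma \ref{lem6.5}; for $\mathcal{I}_l$ one differentiates under the integral sign and checks, using Lemma \ref{lem6.1} together with the bounds (\ref{e6.41}), (\ref{e6.42}) on $\partial_\mu R$ and $\partial_{\sigma_i}R$ and the $C^1$ versions of the Appendix estimates, that $\partial_s\mathcal{I}_l$ equals $\partial_s\big(\alpha_N^{p+1}\Gamma(\sigma_l)(\mu_{l+1}/\mu_l)^{\frac{N-4}{2}}\varepsilon^{\frac{N-4}{2k}\theta}\big)$ up to $o(\varepsilon^{\frac{N-4}{2k}\theta})$, and that the derivatives of the error terms remain of that order. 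I expect the main obstacle to be the step producing $\mathcal{I}_l$: pinning down the constant as $\alpha_N^{p+1}\Gamma(\sigma_l)$ — in particular getting the right argument $\sigma_l$ and the correct combination $-1+2$ of signs from the quadratic and nonlinear parts — together with the verification, also after differentiation, that every non--adjacent interaction, the exterior region and the projection remainders of Lemma \ref{lem6.1} are genuinely of lower order; this is exactly where the sharper bounds (\ref{e6.41})--(\ref{e6.42}) are needed.
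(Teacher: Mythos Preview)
Your strategy is correct and coincides with the paper's: decompose $J_\varepsilon(V)=\sum_j J_\varepsilon(PU_j)+J_R$, localize the interaction on the annuli $A_l$ of (\ref{e4.27})--(\ref{e4.28}), extract the leading adjacent--bubble term $\int_{A_l}U_l^pU_{l+1}=\alpha_N^{p+1}\Gamma(\sigma_l)(\mu_{l+1}/\mu_l)^{(N-4)/2}\varepsilon^{\frac{N-4}{2k}\theta}(1+o(1))$ via the rescaling $x-\xi_0=\mu_{l\varepsilon}z$, and invoke Lemmas \ref{lem6.5}, \ref{lem6.6}, \ref{lem6.7}, \ref{lem6.8} together with Lemma \ref{lem6.1} for the remainders and their derivatives.

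The only organizational difference is in how the coefficient $+1$ in front of $\langle PU_l,PU_{l+1}\rangle$ is produced. You obtain it as ``$-1+2$'': the quadratic cross term contributes $-\langle PU_l,PU_{l+1}\rangle$, while the Taylor expansion of $|V|^{p+1}$ on $A_l$ and on $A_{l+1}$ each contributes $+\langle PU_l,PU_{l+1}\rangle$. The paper instead absorbs the quadratic cross term into the nonlinear part from the outset, writing $J_R=-\tfrac{1}{p+1}\int_{\Omega_\varepsilon}M$ with
\[
M=\Big|\sum_j(-1)^{j+1}PU_j\Big|^{p+1}-\sum_j|PU_j|^{p+1}-(p+1)\sum_{i>j}(-1)^{i+j}PU_i^{\,p}PU_j,
\]
so that on each $A_l$ the $j<l$ pieces of the Taylor expansion cancel exactly against the subtracted $i=l,\,j<l$ terms, leaving $\int_{A_l}M=(p+1)\sum_{j>l}(-1)^{l+j}\int_{A_l}U_l^pU_j+o(\cdot)$ and hence a single contribution per pair. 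Both bookkeepings give the same leading term; the paper's packaging is slightly cleaner because it avoids having to argue that $\int_{A_l}U_l^pU_{l+1}$ and $\int_{A_{l+1}}U_{l+1}^pU_l$ are each, up to $o(\varepsilon^{\frac{N-4}{2k}\theta})$, the full interaction $\langle PU_l,PU_{l+1}\rangle$ (this uses Lemma \ref{lem6.6} to bound the tails outside the respective annuli).
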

\begin{proof}
We write $U_{\mu_{j\varepsilon},\xi_{j\varepsilon}}$ instead of $U_{j}$, $P_{\varepsilon}U_{\mu_{j\varepsilon},\xi_{j\varepsilon}}$ instead of $PU_{j}$.
Then
\begin{align}\label{e4.21}
J_{\varepsilon}(V)=&\frac{1}{2}\sum_{j=1}^k \int_{\Omega_\varepsilon}|\Delta PU_{j}|^2+\int_{\Omega_\varepsilon}\sum_{i>j}(-1)^{i+j} \Delta PU_{i}\Delta PU_{j}-\frac{1}{p+1}\int_{\Omega_\varepsilon}|\sum_{j=1}^k(-1)^{j+1}PU_{j}|^{p+1}
\nonumber\\
=&\sum_{j=1}^kJ_{\varepsilon}(PU_{j})-\frac{1}{p+1}\int_{\Omega_\varepsilon}\bigg(|\sum_{j=1}^k(-1)^{j+1}PU_{j}|^{p+1}-(p+1)\sum_{i>j}(-1)^{i+j} PU_{i}^p PU_{j}
\nonumber\\
&-\sum_{j=1}^k|PU_{j}|^{p+1}\bigg)
:=\sum_{j=1}^kJ_{\varepsilon}(PU_{j})-\frac{1}{p+1}\int_{\Omega_\varepsilon}M(x):=\sum_{j=1}^kJ_{\varepsilon}(PU_{j})+J_R.
\end{align}
Assume $r>0$, thus
\begin{align*}
J_R=-\frac{1}{p+1}\left(\int_{{\Omega_\varepsilon}\setminus B(\xi_0,r)}M(x)+\int_{{\Omega_\varepsilon}\cap B(\xi_0,r)}M(x)\right).
\end{align*}
It holds
\begin{align}\label{e4.26}
\left|\int_{{\Omega_\varepsilon}\setminus B(\xi_0,r)}M(x)\right|&\leq c\left(\sum_{j=1}^k\int_{{\Omega_\varepsilon}\setminus B(\xi_0,r)}U_j^{p+1}+\sum_{i\neq j}\int_{{\Omega_\varepsilon}\setminus B(\xi_0,r)}U_i^{p}U_j\right)
\nonumber\\
&\leq c\left(\sum_{j=1}^k{\mu_{j\varepsilon}}^{N}+\sum_{i\neq j}{\mu_{i\varepsilon}}^{\frac{N+4}{2}}{\mu_{j\varepsilon}}^{\frac{N-4}{2}}\right)
=O\left(\varepsilon^{\frac{N\theta}{2k}}\right).
\end{align}
As we defined in (\ref{e4.27}), we have
\begin{align}\label{e4.29}
\int_{{\Omega_\varepsilon}\cap B(\xi_0,r)}M(x)=\sum_{l=1}^k\int_{A_l}M(x).
\end{align}
Then we first compute $\int_{A_l}M(x)$ for any $l=1,\cdots,k$. Fix $l$, we get
\begin{align}\label{e4.30}
\int_{A_l}M(x)=&\int_{A_l}\left(|\sum_{j=1}^k(-1)^{j+1}PU_{j}|^{p+1}-PU_{l}^{p+1}-(p+1)PU_{l}^{p}\sum_{i\neq l}(-1)^{i+l}PU_{i}\right)
\nonumber\\
&-\sum_{i\neq l}\int_{A_l}PU_{i}^{p+1}-(p+1)\int_{A_l}\left(\sum_{i>j}(-1)^{i+j} PU_{i}^p PU_{j}-PU_{l}^{p}\sum_{i\neq l}(-1)^{i+l}PU_{i}\right)
\nonumber\\
=&(p+1)\sum_{j>l}(-1)^{l+j}\int_{A_l}U_{l}^p U_{j}
\nonumber\\
&+\int_{A_l}\left(|\sum_{j=1}^k(-1)^{j+1}PU_{j}|^{p+1}-PU_{l}^{p+1}-(p+1)PU_{l}^{p}\sum_{i\neq l}(-1)^{i+l}PU_{i}\right)
\nonumber\\
&-\sum_{i\neq l}\int_{A_l}PU_{i}^{p+1}
+(p+1)\sum_{j>l}(-1)^{l+j}\int_{A_l}\bigg((PU_{l}^p-U_{l}^p )U_{j}+PU_{l}^p(PU_{j}-U_{j})\bigg)
\nonumber\\
&+(p+1)\sum_{i>j,i\neq l}(-1)^{i+j}\int_{A_l} PU_{i}^p PU_{j}
\nonumber\\
:=&(p+1)\sum_{j>l}(-1)^{l+j}\int_{A_l}U_{l}^p U_{j}+M_1+M_2+M_3+M_4.
\end{align}
Let $x-\xi_0=\mu_{l\varepsilon}z$, then $A_l$ becomes $\tilde{A}_l=\left\{z\in \mathbb{R}^N: \sqrt{\frac{\mu_{l+1\varepsilon}}{\mu_{l\varepsilon}}}\leq |z|\leq \sqrt{\frac{\mu_{l-1\varepsilon}}{\mu_{l\varepsilon}}}\right\}$ for $l=1,\cdots,k-1$,
then when $j>l$ and $l=1,\cdots,k-1$, we get
\begin{align}\label{e4.32}
\int_{A_l}U_{l}^pU_{j}=&\alpha_N^{p+1}\int_{A_l}\left(\frac{{\mu_{l\varepsilon}}}{{\mu_{l\varepsilon}}^2+|x-\xi_{l\varepsilon}|^2}\right)^{\frac{N+4}{2}}\left(\frac{{\mu_{j\varepsilon}}}{{\mu_{j\varepsilon}}^2+|x-\xi_{j\varepsilon}|^2}\right)^{\frac{N-4}{2}}
\nonumber\\
=&\alpha_N^{p+1}\left(\int_{\mathbb{R}^N}\frac{1}{|z|^{N-4}(1+|z-\sigma_l|^2)^{\frac{N+4}{2}}}\right)\left(\frac{\mu_{j}}{\mu_{l}}\right)^{\frac{N-4}{2}}\varepsilon^{\frac{(N-4)(j-l)\theta}{2k}}(1+o(1)).
\end{align}
Therefore,
\begin{align}\label{e4.33}
\sum_{j>l}(-1)^{l+j}\int_{A_l} U_{l}^p U_{j}=-\alpha_N^{p+1}\Gamma(\sigma_l)\left(\frac{\mu_{l+1}}{\mu_{l}}\right)^{\frac{N-4}{2}}\varepsilon^{\frac{N-4}{2k}\theta}(1+o(1)).
\end{align}
Via a Taylor expansion, we get
\begin{align*}
\left|M_1\right|\leq c\left(\sum_{j\neq l}\int_{A_l}U_l^{p-1}U_j^{2}+\sum_{i,j\neq l}\int_{A_l}U_i^{p-1}U_j^{2}\right).
\end{align*}
Using H\"{o}lder inequality and Lemma \ref{lem6.6}, we have
\begin{align*}
\int_{A_l}U_l^{p-1}U_j^{2}\leq c\left(\int_{A_l}U_l^{p}U_j\right)^{\frac{p-1}{p}}\left(\int_{A_l}U_j^{p+1}\right)^{\frac{1}{p}}
\leq c\varepsilon^{\frac{(N-4)\theta}{2k}(1+\frac{4}{N+4})},
\end{align*}
and
\begin{align*}
\int_{A_l}U_i^{p-1}U_j^{2}\leq c\left(\int_{A_l}U_i^{p}U_j\right)^{\frac{p-1}{p}}\left(\int_{A_i}U_j^{p+1}\right)^{\frac{1}{p}}\leq c\varepsilon^{\frac{N\theta}{2k}}.
\end{align*}
Then we get $\left|M_1\right|\leq C\varepsilon^{\frac{(N-4)\theta}{2k}(1+\frac{4}{N+4})}$.

By Lemma \ref{lem6.6}, we get $\left|M_2\right|=O(\varepsilon^{\frac{N\theta}{2k}})$ and
\begin{align*}
\left|M_4\right|\leq c\sum_{i>j,i\neq l}\int_{A_l} U_{i}^pU_{j}\leq c\left(\sum_{i>j,j\neq l}\int_{A_l} U_{i}^pU_{j}+\sum_{i>l}\int_{A_l} U_{i}^pU_{l}\right)\leq c\varepsilon^{\frac{N\theta}{2k}}.
\end{align*}
By Lemma \ref{lem6.1} and Lemma \ref{lem6.7}, we have
\begin{align*}
\left|M_3\right|\leq c\sum_{j>l}\left(\int_{A_l}(PU_{l}^p-U_{l}^p )U_{j}+U_{l}^p(PU_{j}-U_{j})\right)
=O\left(\varepsilon^{\frac{N-4}{k}\theta}\right).
\end{align*}
Therefore
\begin{align}\label{e4.34}
J_R=\alpha_N^{p+1}\sum_{l=1}^{k-1}\Gamma(\sigma_l)\left(\frac{\mu_{l+1}}{\mu_{l}}\right)^{\frac{N-4}{2}}\varepsilon^{\frac{N-4}{2k}\theta}(1+o(1)).
\end{align}
By Lemma \ref{lem6.5} and (\ref{e4.34}), we get (\ref{e4.16}).

Next, we give the $C^1$ estimate of reduction energy $J_\varepsilon(V)$. Let $\partial_s$ denote $\partial_{\mu_j}$ for $j= 1,\cdots,k$ and $\partial_{\sigma^i_r}$ for $r=1,\cdots,k-1$ and $i=1,\cdots,N$.
It holds:
\begin{align*}
\partial_s\left(J_\varepsilon(V)\right)=\partial_s\left(\sum_{j=1}^kJ_{\varepsilon}(PU_{j})+J_R\right)=\partial_s\left(\sum_{j=1}^kJ_{\varepsilon}(PU_{j})\right)+\partial_s(J_R).
\end{align*}
For $\partial_s=\partial_{\mu_j}$, $j=2,\cdots,k$, we have
\begin{align*}
\partial_s(J_R)=&\partial_{\mu_j}\left(\alpha_N^{p+1}\sum_{l=1}^{k-1}\Gamma(\sigma_l)\left(\frac{\mu_{l+1}}{\mu_{l}}\right)^{\frac{N-4}{2}}\varepsilon^{\frac{N-4}{2k}\theta}(1+o(1))\right)
\nonumber\\
=&\frac{N-4}{2}\alpha_N^{p+1}\left(\Gamma(\sigma_{j-1})\left(\frac{\mu_{j}}{\mu_{j-1}}\right)^{\frac{N-4}{2}}\varepsilon^{\frac{N-4}{2k}\theta}-\Gamma(\sigma_{j})\left(\frac{\mu_{j+1}}{\mu_{j}}\right)^{\frac{N-4}{2}}\varepsilon^{\frac{N-4}{2k}\theta}\right)(1+o(1)).
\end{align*}
Then by Lemma \ref{lem6.5}, we have $\partial_{\mu_j}\left(J_\varepsilon(V)\right)=\partial_{\mu_j}\left(\Phi(\mu,\sigma)\right)+o(\varepsilon^{\frac{N-4}{2k}\theta})$.
In an analogous way, for $j=1,\cdots,N$ and $r=1,\cdots,k-1$, we have
\begin{align*}
\partial_{\sigma_r^j}\left(J_\varepsilon(V)\right)=\partial_{\sigma_r^j}\left(\Phi(\mu,\sigma)\right)+o(\varepsilon^{\frac{N-4}{2k}\theta}).
\end{align*}
\end{proof}

\noindent {\it Proof of Proposition \ref{pro3.4} (ii)}:
From Lemma \ref{lem4.2} and Lemma \ref{thm4.3}, we deduced that $I(\mu, \sigma)$ is $C^0-$uniform over all $\mu$, $\sigma$ satisfying constraints (\ref{e2.8}). We next prove the $C^1$ estimate of $I(\mu, \sigma)=J(V+\phi)$.

Let $\partial_s$ denote $\partial_{\mu_j}$ for $j= 1,\cdots,k$ and $\partial_{\sigma^i_r}$ for $r=1,\cdots,k-1$ and $i=1,\cdots,N$. It holds:
\begin{align}\label{zong1}
\partial_sJ(V+\phi)-\partial_s\Phi(\mu,\sigma)=&J'(V+\phi)[\partial_sV+\partial_s\phi]-\partial_s\Phi(\mu,\sigma)
\nonumber\\
=&[J'(V+\phi)-J'(V)][\partial_sV]+J'(V+\phi)[\partial_s\phi]+[\partial_sJ(V)-\partial_s\Phi(\mu,\sigma)]
\nonumber\\
:=&J_1+J_2+J_3.
\end{align}

{\it Step 1}. We claim  $J_1=o(\varepsilon^{\frac{(N-4)\theta}{2k}})$.

Since
\begin{align}\label{zong11.0}
J_1=-\int_{\Omega_\varepsilon}\left[f(V+\phi)-f(V)+f'(V)\phi\right]\partial_sV-\int_{\Omega_\varepsilon}f'(V)\phi\partial_sV
:=J_{11}+J_{12}.
\end{align}

We first estimate $J_{11}$ for the case $N\geq 13$, then
\begin{align}\label{zong11.1}
|J_{11}|\leq &c\mu_{i\varepsilon}\sum_{h}\int_{\Omega_\varepsilon}|f(V+\phi)-f(V)+f'(V)\phi|\left(|PZ_i^h-Z_i^h|+|Z_i^h|\right)
\nonumber\\
\leq &c\mu_{i\varepsilon}\sum_{h}|\phi|^p_{\frac{2N}{N-4}}|PZ_i^h-Z_i^h|_{\frac{2N}{N-4}}+c\int_{\Omega_\varepsilon}|f(V+\phi)-f(V)+f'(V)\phi|U_i.
\end{align}
By Proposition \ref{pro3.1} and $\mu_{i\varepsilon}|PZ_i^h-Z_i^h|_{\frac{2N}{N-4}}=O(\mu_{i\varepsilon}^{\frac{N-4}{2}})$, we obtain
\begin{align}\label{zong11.2}
\mu_{i\varepsilon}|\phi|^p_{\frac{2N}{N-4}}|PZ_i^h-Z_i^h|_{\frac{2N}{N-4}}=O(\varepsilon^{\frac{(N-4)\theta}{2k}\frac{p^2}{2}+\frac{(N-4)\theta}{4k}})=O(\varepsilon^{\frac{(N-4)\theta}{2k}}).
\end{align}
Moreover
\begin{align}\label{zong11.3}
\int_{\Omega_\varepsilon}|f(V+\phi)-f(V)+f'(V)\phi|U_i
=&\int_{\Omega_\varepsilon\setminus B(\xi_0,\rho)}|f(V+\phi)-f(V)+f'(V)\phi|U_i\nonumber\\
&+\left(\int_{A_i}+\sum_{l\neq i}^k\int_{A_l}\right)|f(V+\phi)-f(V)+f'(V)\phi|U_i,
\end{align}
where
\begin{align}\label{zong11.4}
&\int_{\Omega_\varepsilon\setminus B(\xi_0,\rho)}|f(V+\phi)-f(V)+f'(V)\phi|U_i\nonumber\\
\leq & c \mu_{i\varepsilon}^{\frac{N-4}{2}}\int_{\Omega_\varepsilon\setminus B(\xi_0,\rho)}|\phi|^p\leq c \mu_{i\varepsilon}^{\frac{N-4}{2}}|\phi|^p_{\frac{2N}{N-4}}=o(\varepsilon^{\frac{N-4}{2k}\theta}),
\end{align}
and for $i=l$, we have
\begin{align}\label{zong11.5}
&\int_{A_i}|f(V+\phi)-f(V)+f'(V)\phi|U_i\nonumber\\
\leq& \int_{A_i}|f(V+\phi)-f(V)-f'(U_i)\phi|U_i+|f'(U_i)-f'(V)||\phi|U_i.
\end{align}
Then using H\"{o}lder inequality and Lemma \ref{lem7.0}, we get
\begin{align}\label{zong11.6}
\int_{A_i}|f'(U_i)-f'(V)||\phi|U_i\leq & |[f'(U_i)-f'(V)]U_i|_{\frac{2N}{N+4}}|\phi|_{\frac{2N}{N-4}}\nonumber\\
=&o(\varepsilon^{\frac{N-4}{2k}\frac{\theta p}{2}+\frac{N-4}{2k}\frac{\theta p}{2}})=o(\varepsilon^{\frac{N-4}{2k}\theta}).
\end{align}
By the mean value Theorem, we obtain, for $t\in [0,1]$,
\begin{align}\label{zong11.7}
&\int_{A_i}|f(V+\phi)-f(V)+f'(U_i)\phi|U_i= \int_{A_i}|f'(V+t\phi)-f'(U_i)||\phi|U_i
\nonumber\\
\leq &c\int_{A_i}U_i^{p-2}\left|\sum_{j\neq i}PU_j+PU_i-U_i+t\phi\right||\phi|U_i
\nonumber\\
\leq &c\sum_{j\neq i}|U_i^{p-1}U_j|_{\frac{2N}{N+4}}|\phi|_{\frac{2N}{N-4}}+c|PU_i-U_i|_{\frac{2N}{N-4}}|U_i^{p-1}|_{\frac{N}{4}}|\phi|_{\frac{2N}{N-4}}+c|U_i^{p-1}|_{\frac{N}{4}}|\phi|^2_{\frac{2N}{N-4}}
\nonumber\\
=&o(\varepsilon^{\frac{N-4}{2k}\theta}).
\end{align}
For $i\neq l$, by H\"{o}lder inequality and Proposition \ref{pro3.1}, we deduce that
\begin{align}\label{zong11.8}
\int_{A_l}|f(V+\phi)-f(V)+f'(V)\phi|U_i\leq c\int_{A_l}|\phi|^pU_i\leq c|\phi|^p_{\frac{2N}{N-4}}|U_i|_{\frac{2N}{N-4}}=o(\varepsilon^{\frac{N-4}{2k}\theta}).
\end{align}
Then by (\ref{zong11.1})-(\ref{zong11.8}), we have
\begin{align}\label{zong11.9}
|J_{11}|=o(\varepsilon^{\frac{N-4}{2k}\theta}) \quad \text{ for } N\geq 13.
\end{align}

We next consider the case $4<N\leq 12$. Using $|V|_{\frac{2N}{12-N}}=O(\varepsilon^{\frac{2\theta}{k}})$ and $|\partial_sV|_{\frac{2N}{N-4}}=O(1)$, then
\begin{align}\label{zong11.10}
|J_{11}|\leq |V|_{\frac{2N}{12-N}} |\phi|^2_{\frac{2N}{N-4}} |\partial_sV|_{\frac{2N}{N-4}}+ |\phi|^p_{\frac{2N}{N-4}} |\partial_sV|_{\frac{2N}{N-4}}=o(\varepsilon^{\frac{N-4}{2k}\theta}).
\end{align}
From (\ref{zong11.9}) and (\ref{zong11.10}), we get
\begin{align}\label{zong11}
|J_{11}|=o(\varepsilon^{\frac{N-4}{2k}\theta}).
\end{align}
Moreover,
\begin{align}\label{zong12}
|J_{12}|=&\left|\int_{\Omega_\varepsilon}f'(V)\phi\partial_sV\right|\leq \mu_{i\varepsilon}\sum_{j}\left(\left|\int_{\Omega_\varepsilon}f'(V)\phi(PZ_i^j-Z_i^j)\right|+\left|\int_{\Omega_\varepsilon}[f'(V)-f'(U_i)]\phi Z_i^j)\right|\right)
\nonumber\\
\leq &\mu_{i\varepsilon}\sum_{j}|V^p|_{\frac{N}{4}}|\phi|_{\frac{2N}{N-4}}|PZ_i^j-Z_i^j|_{\frac{2N}{N-4}}+\sum_{j}|\phi|_{\frac{2N}{N-4}}|[f'(V)-f'(U_i)]U_i|_{\frac{2N}{N+4}}=o(\varepsilon^{\frac{N-4}{2k}\theta}).
\end{align}
Then by (\ref{zong11}) and (\ref{zong12}), it follows that $J_{1}=o(\varepsilon^{\frac{N-4}{2k}\theta})$.

{\it Step 2}. We show that $J_{2} =o(\varepsilon^{\frac{N-4}{2k}\theta})$.

By (\ref{e2.9}), we have
\begin{align}\label{j22}
J_{2}=J'(V+\phi)[\partial_s\phi]=\sum_{l=0}^{N}\sum_{j=1}^{k}c^l_j\langle PZ^l_j, \partial_s\phi \rangle.
\end{align}
According to (\ref{e3.46}) and (\ref{e3.47}), we know that
\begin{align}\label{j2.1}
\langle PZ^l_j, \partial_s\phi \rangle=\begin{cases}
  0, &\text{ if }s=\mu_h, h\neq i, h=1,\cdots,k,\\
  O(\frac{\|\phi\|}{\mu_{h\varepsilon}}), &\text{ if }s=\mu_h, h=i,\\
  0, &\text{ if }s=\sigma_r^j, r\neq i, r=1,\cdots,k-1, j=1,\cdots,N,\\
  O(\frac{1}{\mu_{r\varepsilon}}), &\text{ if }r=i.\end{cases}
\end{align}
Moreover, by (\ref{e2.9}), we get $J'(V+\phi)[PZ^t_h]=\sum_{l=0}^{N}\sum_{j=1}^{k}c^l_j\langle PZ^l_j, PZ^t_h \rangle$.

Since $\phi \in K^{\perp}$, we see
\begin{align*}
J'(V+\phi)[PZ^t_h]
=&\int_{\Omega_\varepsilon}\left[\sum_{i=1}^{k}(-1)^{i+1}f(U_i)-f(V)\right]PZ^t_h+\int_{\Omega_\varepsilon}\left[f(V)-f(V+\phi)\right]PZ^t_h
\nonumber\\
=&O\left(|\sum_{i=1}^{k}(-1)^{i+1}f(U_i)-f(V)|_{\frac{2N}{N+4}}|PZ^t_h|_{\frac{2N}{N-4}}\right)
\nonumber\\
&+O\left(|V^{p-1}|_{\frac{N}{4}}|\phi|_{\frac{2N}{N-4}}|PZ^t_h|_{\frac{2N}{N-4}}\right)+O\left(|\phi|^p_{\frac{2N}{N-4}}|PZ^t_h|_{\frac{2N}{N-4}}\right).
\end{align*}
From the estimate of $W_1$ in Lemma \ref{lem4.4}, $|PZ^t_h|_{\frac{2N}{N-4}}=O(\frac{1}{\mu_{h\varepsilon}})$ and $|V^{p-1}|_{\frac{N}{4}}=O(1)$, we obtain
\begin{align*}
J'(V+\phi)[PZ^t_h]\leq \begin{cases}
  c\frac{\varepsilon^{\frac{(N-4)\theta}{2k}\frac{p}{2}}}{\mu_{h\varepsilon}}, &\text{ if }N\geq 13,\\
  c\frac{\varepsilon^{\frac{(N-4)\theta}{2k}}|\ln \varepsilon|}{\mu_{h\varepsilon}}, &\text{ if }N=12,\\
  c\frac{\varepsilon^{\frac{(N-4)\theta}{2k}}}{\mu_{h\varepsilon}}, &\text{ if }5\leq N\leq 11.\end{cases}
\end{align*}
Then by Lemma \ref{lem6.3}, we get
\begin{align}\label{j2.2}
c^l_j=\begin{cases}O(\varepsilon^{\frac{(N-4)\theta}{2k}\frac{p}{2}}), &\text{ if }N\geq 13,\\
  O(\varepsilon^{\frac{(N-4)\theta}{2k}}|\ln \varepsilon|), &\text{ if }N=12,\\
  O(\varepsilon^{\frac{(N-4)\theta}{2k}}), &\text{ if }5\leq N\leq 11.\end{cases}
\end{align}
Thus from (\ref{j22}), (\ref{j2.1}) and (\ref{j2.2}), we have $J_{2}=J'(V+\phi)[\partial_s\phi]=o(\varepsilon^{\frac{N-4}{2k}\theta})$.

{\it Step 3}. We estimate $J_{3}=o(\varepsilon^{\frac{N-4}{2k}\theta})$. This estimate is proved in Lemma \ref{thm4.3}.

Therefore, we get the $C^1$ estimate of $I(\mu, \sigma)=J(V+\phi)$.
\qed

\section{Appendix}\label{se8}

In the Appendix, we give some estimates.

\begin{lemma}\label{lem6.2}
Let $d>0$ small but fixed and assume $\mu$, $\sigma$ satisfying (\ref{e2.8}). Then
\begin{equation}\label{e6.5}
\int_{\Omega_\varepsilon}U_{\mu,\xi}^{p-1}(P_\varepsilon U_{\mu,\xi}-U_{\mu,\xi})^2=\begin{cases}O(\mu^N+\frac{\varepsilon^{N}}{\mu^N}), \ \  N\geq 9,\\
O(\mu^8|\log \mu|+\frac{\varepsilon^{12}}{\mu^{12}}|\log (\frac{\varepsilon}{\mu})|), \ \ N=8,\\
O(\mu^{2(N-4)}+\frac{\varepsilon^{2(N-2)}}{\mu^{2(N-2)}}), \ \ 5 \leq N<8.
\end{cases}
\end{equation}
\end{lemma}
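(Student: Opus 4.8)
The plan is to read off (\ref{e6.5}) directly from the pointwise bounds of Lemma \ref{lem6.1}. Writing $U:=U_{\mu,\xi}$ and using the definition (\ref{e6.1}) of $R$,
\[
P_\varepsilon U-U=-\alpha_N\mu^{\frac{N-4}{2}}H(\cdot,\xi)-a_1\varphi_1\!\Big(\tfrac{\cdot-\xi_0}{\varepsilon}\Big)-a_2\varphi_2\!\Big(\tfrac{\cdot-\xi_0}{\varepsilon}\Big)+R ,
\]
hence, by $(\sum_{i=1}^4 t_i)^2\le 4\sum_i t_i^2$,
\[
(P_\varepsilon U-U)^2\le c\Big(\mu^{N-4}H(\cdot,\xi)^2+a_1^2\,\varphi_1\!\big(\tfrac{\cdot-\xi_0}{\varepsilon}\big)^2+a_2^2\,\varphi_2\!\big(\tfrac{\cdot-\xi_0}{\varepsilon}\big)^2+R^2\Big).
\]
Using $|a_1|\le c\varepsilon^2\mu^{-N/2}$, $|a_2|\le c\mu^{-(N-4)/2}$ together with $\varphi_1\!\big(\tfrac{x-\xi_0}{\varepsilon}\big)=\varepsilon^{N-4}|x-\xi_0|^{-(N-4)}$ and $\varphi_2\!\big(\tfrac{x-\xi_0}{\varepsilon}\big)=\varepsilon^{N-2}|x-\xi_0|^{-(N-2)}$, a comparison with (\ref{e6.4}) shows $|R|\le \tfrac{c\varepsilon}{\mu}\big(|a_1\varphi_1(\tfrac{\cdot-\xi_0}{\varepsilon})|+|a_2\varphi_2(\tfrac{\cdot-\xi_0}{\varepsilon})|\big)$, so the $R^2$ term is pointwise absorbed into the $a_1^2$ and $a_2^2$ terms. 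It therefore suffices to estimate
\[
\mathrm{(I)}:=\mu^{N-4}\!\!\int_{\Omega_\varepsilon}\!\! U^{p-1}H(\cdot,\xi)^2,\quad
\mathrm{(II)}:=a_1^2\!\!\int_{\Omega_\varepsilon}\!\! U^{p-1}\varphi_1\!\big(\tfrac{\cdot-\xi_0}{\varepsilon}\big)^2,\quad
\mathrm{(III)}:=a_2^2\!\!\int_{\Omega_\varepsilon}\!\! U^{p-1}\varphi_2\!\big(\tfrac{\cdot-\xi_0}{\varepsilon}\big)^2.
\]

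For $\mathrm{(I)}$ one uses that the regular part $H(\cdot,\xi)$ is bounded on $\Omega$, uniformly for $\xi$ near $\xi_0$ (as $x\to\partial\Omega$, $G(x,\xi)\to 0$ while $|x-\xi|^{-(N-4)}$ stays bounded), so $\mathrm{(I)}\le c\mu^{N-4}\int_{\Omega_\varepsilon}U^{p-1}$. The scaling $x-\xi=\mu z$ gives $\int_{\Omega_\varepsilon}U^{p-1}=\alpha_N^{p-1}\mu^{N-4}\int_{\mu^{-1}(\Omega_\varepsilon-\xi)}(1+|z|^2)^{-4}\,dz$; since $(1+|z|^2)^{-4}\in L^1(\mathbb R^N)$ exactly when $N<8$, while for $N\ge 8$ the integral is controlled by the volume of the rescaled domain (a ball of radius $\sim\mu^{-1}$ with a small hole), one gets $\int_{\Omega_\varepsilon}U^{p-1}=O(\mu^4)$ for $N\ge 9$, $O(\mu^4|\log\mu|)$ for $N=8$ and $O(\mu^{N-4})$ for $5\le N<8$. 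Thus $\mathrm{(I)}$ is precisely the first summand on the corresponding line of (\ref{e6.5}).

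For $\mathrm{(II)}$ and $\mathrm{(III)}$ one notes that $a_1^2\varphi_1\!\big(\tfrac{x-\xi_0}{\varepsilon}\big)^2\sim\varepsilon^{2N-4}\mu^{-N}|x-\xi_0|^{-2(N-4)}$ and $a_2^2\varphi_2\!\big(\tfrac{x-\xi_0}{\varepsilon}\big)^2\sim\varepsilon^{2N-4}\mu^{-(N-4)}|x-\xi_0|^{-2(N-2)}$, both concentrated at $\xi_0$. I would split $\Omega_\varepsilon=(\Omega_\varepsilon\setminus B(\xi_0,r))\cup(B(\xi_0,r)\setminus B(\xi_0,\varepsilon))$ for a fixed small $r$: on the outer piece all three integrals are of strictly lower order, and on $B(\xi_0,r)\setminus B(\xi_0,\varepsilon)$ one splits once more at $|x-\xi_0|\sim\mu$, using $U^{p-1}\sim\mu^{-4}$ on $\{\varepsilon<|x-\xi_0|<\mu\}$ and $U^{p-1}\sim\mu^{4}|x-\xi_0|^{-8}$ on $\{\mu<|x-\xi_0|<r\}$. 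What then remains is to integrate, over each of the two annuli, the explicit radial functions $|x-\xi_0|^{-2(N-4)}$ (in $\mathrm{(II)}$) and $|x-\xi_0|^{-2(N-2)}$ (in $\mathrm{(III)}$) weighted by $U^{p-1}$, and to multiply by the corresponding prefactors; collecting powers of $\varepsilon$ and $\mu$ then yields the $\varepsilon$-dependent second summand on each line of (\ref{e6.5}).

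The only genuine difficulty is the bookkeeping across dimensions: the elementary integrals $\int_\varepsilon^\mu r^{a}\,dr$ arising in $\mathrm{(I)}$–$\mathrm{(III)}$ change behaviour with $N$ — for $a+1<0$ they are governed by the endpoint $\varepsilon$, for $a+1>0$ by the endpoint $\mu$, and in the threshold dimension they produce the logarithm $|\log(\varepsilon/\mu)|$ appearing in the line $N=8$. One therefore has to run the three regimes $N\ge 9$, $N=8$, $5\le N<8$ in parallel through the whole near-hole analysis and verify in each case that the contributions of $\mathrm{(II)}$, $\mathrm{(III)}$ (hence also $R^2$) are of the claimed order; everything else is routine scaling and is already implicit in the proof of Lemma \ref{lem6.1}.
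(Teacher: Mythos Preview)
Your proposal is correct and follows essentially the same route as the paper. Both proofs start from the pointwise description of $P_\varepsilon U-U$ in Lemma~\ref{lem6.1}, square it, multiply by $U^{p-1}$, and estimate the resulting integrals term by term across the three dimensional regimes; the only cosmetic differences are that the paper packages the four pieces $\alpha_N\mu^{\frac{N-4}{2}}H$, $a_1\varphi_1$, $a_2\varphi_2$, $R$ into one combined pointwise bound before squaring, and evaluates the singular integrals by a single change of variables ($x-\xi_0=\varepsilon y$ when $N\ge 9$, $x-\xi=\mu y$ when $N\le 8$) rather than by your splitting at $|x-\xi_0|\sim\mu$.
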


\begin{proof}
Using Lemma \ref{lem6.1}, we have
\begin{equation*}
0\leq U_{\mu,\xi}-P_\varepsilon U_{\mu,\xi}\leq C\left(\mu^{\frac{N-4}{2}}+\frac{\varepsilon^{N-2}}{\mu^{\frac{N}{2}}}\frac{1}{|x-\xi_0|^{N-4}}+\frac{\varepsilon^{N-2}}{\mu^{\frac{N-4}{2}}}\frac{1}{|x-\xi_0|^{N-2}}+\frac{\varepsilon^{N}}{\mu^{\frac{N}{2}}}\frac{1}{|x-\xi_0|^{N-2}}\right).
\end{equation*}
Thus it is equivalent to evaluate
\begin{equation*}
\int_{\Omega_\varepsilon}\left(\frac{\mu}{\mu^2+|x-\xi|^2}\right)^4\left(\mu^{N-4}+\frac{\varepsilon^{2(N-2)}}{\mu^{N}|x-\xi_0|^{2(N-4)}}+\frac{\varepsilon^{2(N-2)}}{\mu^{N}|x-\xi_0|^{2(N-2)}}+\frac{\varepsilon^{2N}}{\mu^{N}|x-\xi_0|^{2(N-2)}}\right).
\end{equation*}

(i) If $N\geq 9$, then $\int_{\Omega_\varepsilon}\left(\frac{\mu}{\mu^2+|x-\xi|^2}\right)^4=O\left(\mu^4 \int_{\Omega}\frac{1}{|x-\xi_0|^8}\right)$.\\
Set $x-\xi_0=\varepsilon y$, then $\int_{\Omega_\varepsilon}\left(\frac{\mu}{\mu^2+|x-\xi|^2}\right)^4\frac{1}{|x-\xi_0|^{2(N-4)}}=O\left(\varepsilon^{-(N-8)}\mu^{-4} \int_{|y|\geq 1}\frac{1}{|y|^{2(N-4)}}\right)$
and
\begin{equation*}
\int_{\Omega_\varepsilon}\left(\frac{\mu}{\mu^2+|x-\xi|^2}\right)^4\frac{1}{|x-\xi_0|^{2(N-2)}}=O\left(\varepsilon^{-(N-4)}\mu^{-4} \int_{|y|\geq 1}\frac{1}{|y|^{2(N-2)}}\right).
\end{equation*}

(ii) If $N=8$, setting $x-\xi=\mu y$, then $\int_{\Omega_\varepsilon}\left(\frac{\mu}{\mu^2+|x-\xi|^2}\right)^4=O\left(\mu^4 |\log {\mu}|\right)$,\\
\begin{equation*}
\int_{\Omega_\varepsilon}\left(\frac{\mu}{\mu^2+|x-\xi|^2}\right)^4\frac{1}{|x-\xi_0|^{2(N-4)}}=O\left(\mu^{-4} |\log (\frac{\varepsilon}{\mu})|\right)
\end{equation*}
and
\begin{equation*}
\int_{\Omega_\varepsilon}\left(\frac{\mu}{\mu^2+|x-\xi|^2}\right)^4\frac{1}{|x-\xi_0|^{2(N-2)}}=O\left(\mu^{-8} \int_{\mathbb{R}^N}\frac{1}{(1+|y|^2)^4}\frac{1}{|y+\sigma|^{2(N-2)}}\right).
\end{equation*}

(iii) If $5\leq N<8$, setting $x-\xi=\mu y$, then $\int_{\Omega_\varepsilon}\left(\frac{\mu}{\mu^2+|x-\xi|^2}\right)^4=O\left(\mu^4 \int_{\mathbb{R}^N}\frac{1}{(1+|y|^2)^4}\right)$,\\
\begin{equation*}
\int_{\Omega_\varepsilon}\left(\frac{\mu}{\mu^2+|x-\xi|^2}\right)^4\frac{1}{|x-\xi_0|^{2(N-4)}}=O\left(\mu^{-N+4}\int_{\mathbb{R}^N}\frac{1}{(1+|y|^2)^4}\frac{1}{|y+\sigma|^{2(N-4)}} \right)
\end{equation*}
and
\begin{equation*}
\int_{\Omega_\varepsilon}\left(\frac{\mu}{\mu^2+|x-\xi|^2}\right)^4\frac{1}{|x-\xi_0|^{2(N-2)}}=O\left(\mu^{-N} \int_{\mathbb{R}^N}\frac{1}{(1+|y|^2)^4}\frac{1}{|y+\sigma|^{2(N-2)}}\right).
\end{equation*}
Collecting all the previous estimates, the claim follows.
\end{proof}

By direct calculation, as \cite[Lemma A.5]{Musso}, we have the following estimates.
\begin{lemma}\label{lem6.3}
It holds:
\begin{align*}
\langle PZ^j_i, PZ^h_l\rangle=o\left(\frac{1}{\mu_{i\varepsilon}^2}\right), \text{if}\  l>i,
\end{align*}
\begin{align*}
\langle PZ^j_i, PZ^h_i\rangle=o\left(\frac{1}{\mu_{i\varepsilon}^2}\right), \text{if}\  j\neq h,
\end{align*}
\begin{align*}
\langle PZ^j_i, PZ^j_i\rangle=\frac{c_j}{\mu_{i\varepsilon}^2}(1+o(1)),
\end{align*}
for some positive constants $c_0$ and $c_1=\cdots=c_N$.
\end{lemma}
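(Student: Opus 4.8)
The plan is to turn every inner product into an integral against a single bubble. Since $Z^j_{\mu,\xi}$ solves the linearized equation $\Delta^2 Z^j_{\mu,\xi}=f'(U_{\mu,\xi})Z^j_{\mu,\xi}=p\,U_{\mu,\xi}^{p-1}Z^j_{\mu,\xi}$ in $\mathbb{R}^N$, its projection $PZ^j_{\mu_{i\varepsilon},\xi_{i\varepsilon}}$ satisfies $\Delta^2 PZ^j_{\mu_{i\varepsilon},\xi_{i\varepsilon}}=p\,U_{\mu_{i\varepsilon},\xi_{i\varepsilon}}^{p-1}Z^j_{\mu_{i\varepsilon},\xi_{i\varepsilon}}$ in $\Omega_\varepsilon$ with $PZ^j=\Delta PZ^j=0$ on $\partial\Omega_\varepsilon$, so integrating by parts gives, for every $v\in H^2(\Omega_\varepsilon)\cap H^1_0(\Omega_\varepsilon)$,
\[
\langle PZ^j_{\mu_{i\varepsilon},\xi_{i\varepsilon}},v\rangle=p\int_{\Omega_\varepsilon}U_{\mu_{i\varepsilon},\xi_{i\varepsilon}}^{p-1}Z^j_{\mu_{i\varepsilon},\xi_{i\varepsilon}}\,v\,dx .
\]
Writing $U_i,Z^j_i,PZ^j_i$ for the quantities centered at $(\mu_{i\varepsilon},\xi_{i\varepsilon})$, this yields $\langle PZ^j_i,PZ^h_l\rangle=p\int_{\Omega_\varepsilon}U_i^{p-1}Z^j_i\,PZ^h_l\,dx$ (and, by symmetry of $\langle\cdot,\cdot\rangle$, also $=p\int_{\Omega_\varepsilon}U_l^{p-1}Z^h_l\,PZ^j_i\,dx$, which is convenient for the cross terms). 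Next I would replace $PZ^h_l$ by $Z^h_l$, estimating the difference $Z^h_l-PZ^h_l$ by a comparison argument identical to Lemma \ref{lem6.1} and \cite[Proposition 2.1]{Ala}; this remainder is of strictly lower order, so by H\"older its contribution to $\langle PZ^j_i,PZ^h_l\rangle$ is $o(\mu_{i\varepsilon}^{-2})$.

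For the diagonal block $l=i$ I would rescale $x=\mu_{i\varepsilon}y+\xi_{i\varepsilon}$. Using $U_i(x)=\mu_{i\varepsilon}^{-(N-4)/2}U_{1,0}(y)$, $Z^m_i(x)=\mu_{i\varepsilon}^{-(N-2)/2}Z^m_{1,0}(y)$ and the identity $(N-4)(p-1)/2=4$, one obtains
\[
\int_{\Omega_\varepsilon}U_i^{p-1}Z^j_iZ^h_i\,dx=\frac{1}{\mu_{i\varepsilon}^{2}}\int_{(\Omega_\varepsilon-\xi_{i\varepsilon})/\mu_{i\varepsilon}}U_{1,0}^{p-1}Z^j_{1,0}Z^h_{1,0}\,dy .
\]
The rescaled domain exhausts $\mathbb{R}^N$ because $\varepsilon/\mu_{i\varepsilon}\to0$; this is exactly where the choice of $\theta$ in (\ref{e2.6}) is used, since $(2i-1)\theta/(2k)<1$ holds precisely when $N>4$. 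By dominated convergence (the integrand decays like $|y|^{-2N}$) together with the parity of the $Z^m_{1,0}$ — radial for $m=0$, odd in the $m$-th variable for $m\ge1$ — the limiting integral vanishes when $j\ne h$, and when $j=h$ it equals $c_j/p$, where $c_0:=p\int_{\mathbb{R}^N}U_{1,0}^{p-1}(Z^0_{1,0})^2>0$ and, by rotational invariance, $c_1=\cdots=c_N:=p\int_{\mathbb{R}^N}U_{1,0}^{p-1}(Z^1_{1,0})^2>0$. This gives the second and third identities of the lemma.

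For the off-diagonal block $l>i$ one has $\mu_{l\varepsilon}/\mu_{i\varepsilon}\to0$, and I would exploit this scale separation. Splitting $\Omega_\varepsilon$ into the core of the $l$-th bubble $\{|x-\xi_0|\le R\mu_{l\varepsilon}\}$, the intermediate annulus $\{R\mu_{l\varepsilon}\le|x-\xi_0|\le\delta\}$, and the exterior $\{|x-\xi_0|\ge\delta\}$, I would bound $\int U_i^{p-1}|Z^j_i|\,|Z^h_l|$ on each piece using the explicit formulas for $U$, $Z^0$, $Z^m$ and their decay rates, and sum the estimates to get $o(\mu_{i\varepsilon}^{-2})$, uniformly in $\mu,\sigma$ subject to (\ref{e2.8}); the remainder from Step 1 contributes at the same level. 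I expect this last step to be the main obstacle: the genuinely delicate case is the $\mu$-direction, i.e.\ $j$ or $h$ equal to $0$, in low dimensions, where $Z^0_{1,0}$ decays only like $|x|^{4-N}$ so the interaction tail is merely borderline integrable; one extracts the needed gain either from the cancellation $\int_{\mathbb{R}^N}U_{1,0}^{p-1}Z^h_{1,0}=0$ valid for $h\ge1$, or by retaining the leading term of the slow tail of $Z^0_l$. The whole argument is a (somewhat long) direct computation, running parallel to \cite[Lemma A.5]{Musso} but carried out for $\Delta^2$ and in the presence of the hole, so that the estimates from Lemma \ref{lem6.1} replace the classical projection estimates.
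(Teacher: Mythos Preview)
Your proposal is correct and is precisely the standard route the paper has in mind: the paper itself gives no proof here, simply writing ``By direct calculation, as \cite[Lemma A.5]{Musso}'', and your sketch---rewriting $\langle PZ^j_i,\cdot\rangle$ via $\Delta^2 PZ^j_i=pU_i^{p-1}Z^j_i$, replacing projected quantities by the unprojected ones with the Lemma~\ref{lem6.1}-type remainder, rescaling on the diagonal block, and exploiting scale separation for $l>i$---is exactly how that referenced computation runs, adapted to the biharmonic setting with a hole.
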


\begin{lemma}\label{lem6.6}
The following estimates hold:
\begin{align}\label{e6.14}
\int_{A_l}U_{j}^{p+1}=O(\varepsilon^{\frac{N\theta}{2k}}) \quad \mbox{for\ all}\ j\neq l,
\end{align}
\begin{align}\label{e6.15}
\int_{A_l}U_{i}^p U_{j}=O(\varepsilon^{\frac{N\theta}{2k}}) \quad \mbox{for\ all}\ j\neq l, i\neq l,
\end{align}
\begin{align}\label{e6.16}
\int_{A_l}U_{l}^p U_{j}=O(\varepsilon^{\frac{(N-4)|l-j|\theta}{2k}}) \quad \mbox{for\ all}\ j\neq l.
\end{align}
\end{lemma}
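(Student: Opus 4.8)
The plan is to exploit the strict separation of the concentration scales: under (\ref{e2.6}) one has $\mu_{(i+1)\varepsilon}/\mu_{i\varepsilon}=\frac{\mu_{i+1}}{\mu_i}\varepsilon^{\theta/k}\to0$, so the $\mu_{i\varepsilon}$ decrease geometrically in $i$ and each annulus $A_l$ sits in the transition region between $U_l$ and its neighbours. The starting point is that for $j\neq l$ and $x\in A_l$ the bubble $U_j$ is governed by a single regime: if $j>l$, then for $\varepsilon$ small (using $\mu_{j\varepsilon}\leq\mu_{(l+1)\varepsilon}$ and $\mu_{(l+1)\varepsilon}/d\leq\frac12\sqrt{\mu_{l\varepsilon}\mu_{(l+1)\varepsilon}}$) one has $|x-\xi_{j\varepsilon}|\geq|x-\xi_0|-\mu_{j\varepsilon}|\sigma_j|\geq\frac12|x-\xi_0|\geq\frac12\sqrt{\mu_{l\varepsilon}\mu_{(l+1)\varepsilon}}\gg\mu_{j\varepsilon}$, whence $U_j(x)\leq C\mu_{j\varepsilon}^{(N-4)/2}|x-\xi_0|^{-(N-4)}$ on $A_l$ (far-field regime); if $j<l$, one simply uses the trivial global bound $U_j\leq\alpha_N\mu_{j\varepsilon}^{-(N-4)/2}$, which is essentially sharp on $A_l$ since then $A_l$ lies deep inside the core of $U_j$. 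The boundary conventions $\mu_{0\varepsilon}\mu_{1\varepsilon}=r^2$ and $\mu_{k\varepsilon}\mu_{(k+1)\varepsilon}=\varepsilon^2$ only force $j>1$ when $l=1$ and $j<k$ when $l=k$, so they introduce no new case.

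For (\ref{e6.14}) I would argue directly, using $(N-4)(p+1)=2N$. If $j>l$, the far-field bound gives $\int_{A_l}U_j^{p+1}\leq C\mu_{j\varepsilon}^{N}\int_{A_l}|x-\xi_0|^{-2N}\,dx\leq C\mu_{j\varepsilon}^{N}(\mu_{l\varepsilon}\mu_{(l+1)\varepsilon})^{-N/2}\leq C(\mu_{(l+1)\varepsilon}/\mu_{l\varepsilon})^{N/2}=O(\varepsilon^{N\theta/(2k)})$ by (\ref{e2.8}), since $\mu_{j\varepsilon}\leq\mu_{(l+1)\varepsilon}$. If $j<l$, the global bound gives $\int_{A_l}U_j^{p+1}\leq C\mu_{j\varepsilon}^{-N}|A_l|\leq C\mu_{j\varepsilon}^{-N}(\mu_{l\varepsilon}\mu_{(l-1)\varepsilon})^{N/2}\leq C(\mu_{l\varepsilon}/\mu_{(l-1)\varepsilon})^{N/2}=O(\varepsilon^{N\theta/(2k)})$, using $\mu_{j\varepsilon}\geq\mu_{(l-1)\varepsilon}$. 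Then (\ref{e6.15}) follows immediately by H\"older's inequality: $\int_{A_l}U_i^pU_j\leq\big(\int_{A_l}U_i^{p+1}\big)^{p/(p+1)}\big(\int_{A_l}U_j^{p+1}\big)^{1/(p+1)}$, and for $i,j\neq l$ both factors are $O(\varepsilon^{N\theta/(2k)})$ by (\ref{e6.14}), so the product is $O(\varepsilon^{N\theta/(2k)})$.

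For (\ref{e6.16}) H\"older against $\int_{A_l}U_l^{p+1}=O(1)$ would only give the exponent $\frac{(N-4)\theta}{4k}$, so a sharper direct computation is required. I would rescale $x-\xi_0=\mu_{l\varepsilon}z$; then $A_l$ turns into $\tilde{A}_l=\{z:\sqrt{\mu_{(l+1)\varepsilon}/\mu_{l\varepsilon}}\leq|z|\leq\sqrt{\mu_{(l-1)\varepsilon}/\mu_{l\varepsilon}}\}$, which increases to $\mathbb{R}^N$, and $U_l^p(x)=\mu_{l\varepsilon}^{-(N+4)/2}U_{1,0}^p(z-\sigma_l)$. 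If $j>l$, inserting $dx=\mu_{l\varepsilon}^N\,dz$ and $U_j(x)\leq C\mu_{j\varepsilon}^{(N-4)/2}\mu_{l\varepsilon}^{-(N-4)}|z|^{-(N-4)}$, the powers of $\mu_{l\varepsilon}$ cancel and one is left with $C\,(\mu_{j\varepsilon}/\mu_{l\varepsilon})^{(N-4)/2}\int_{\tilde{A}_l}U_{1,0}^p(z-\sigma_l)|z|^{-(N-4)}\,dz$; since $U_{1,0}^p(z-\sigma_l)|z|^{-(N-4)}\in L^1(\mathbb{R}^N)$ (the limiting integral being $\alpha_N^p\Gamma(\sigma_l)$) and $(\mu_{j\varepsilon}/\mu_{l\varepsilon})^{(N-4)/2}=(\frac{\mu_j}{\mu_l})^{(N-4)/2}\varepsilon^{(N-4)(j-l)\theta/(2k)}=O(\varepsilon^{(N-4)|l-j|\theta/(2k)})$, the claim follows. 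If $j<l$, the same change of variables with $U_j\leq\alpha_N\mu_{j\varepsilon}^{-(N-4)/2}$ yields $C\,(\mu_{l\varepsilon}/\mu_{j\varepsilon})^{(N-4)/2}\int_{\tilde{A}_l}U_{1,0}^p(z-\sigma_l)\,dz$, where $\int_{\tilde{A}_l}U_{1,0}^p(z-\sigma_l)\,dz\leq\int_{\mathbb{R}^N}U_{1,0}^p\,dz<\infty$ and $(\mu_{l\varepsilon}/\mu_{j\varepsilon})^{(N-4)/2}=O(\varepsilon^{(N-4)|l-j|\theta/(2k)})$.

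The bulk of the work is bookkeeping: deciding which of the core/far-field regimes governs $U_j$ on $A_l$ and then matching the resulting powers of $\mu_{l\varepsilon},\mu_{j\varepsilon},\mu_{i\varepsilon}$ against powers of $\varepsilon$ via (\ref{e2.6}) and the bounds (\ref{e2.8}) on $\mu_i,\sigma_i$. The one genuinely analytic point, needed only for (\ref{e6.16}), is the uniform-in-$\varepsilon$ control of the rescaled integral over the expanding domains $\tilde{A}_l$; this is handled by dominating the integrand by a fixed $L^1(\mathbb{R}^N)$ function (namely $U_{1,0}^p(\cdot-\sigma_l)|z|^{-(N-4)}$, resp.\ $U_{1,0}^p(\cdot-\sigma_l)$) and passing to the limit as $\tilde{A}_l\nearrow\mathbb{R}^N$ by monotone convergence.
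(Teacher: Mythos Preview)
Your proof is correct and follows essentially the same route as the paper. The only cosmetic difference is in (\ref{e6.14}): the paper rescales by $\mu_{j\varepsilon}$ (setting $x-\xi_0=\mu_{j\varepsilon}z$) and then bounds $\int_{\hat A_l}(1+|z-\sigma_j|^2)^{-N}$ on the transformed annulus, whereas you bypass the change of variables by using the pointwise far-field/core bounds on $U_j$ directly; for (\ref{e6.15}) both arguments apply H\"older against (\ref{e6.14}), and for (\ref{e6.16}) both rescale by $\mu_{l\varepsilon}$ and control the resulting integral over $\tilde A_l$ by its $L^1(\mathbb{R}^N)$ limit.
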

\begin{proof}
To get (\ref{e6.14}), we perform the change of variable $x-\xi_0=\mu_{j\varepsilon}z$, then $A_l$ becomes
$$\hat{A}_l=\left\{z\in \mathbb{R}^N: \frac{\sqrt{\mu_{l+1\varepsilon}\mu_{l\varepsilon}}}{\mu_{j\varepsilon}}\leq |z|\leq \frac{\sqrt{\mu_{l\varepsilon}\mu_{l-1\varepsilon}}}{\mu_{j\varepsilon}}\right\}.
$$
If $j>l$, then $\frac{\sqrt{\mu_{l\varepsilon}\mu_{l-1\varepsilon}}}{\mu_{j\varepsilon}}\rightarrow \infty$ and $\left|\int_{A_l}U_{j}^{p+1}\right|=\left|\int_{\hat{A}_l}\frac{1}{(1+|z-\sigma_j|^2)^{N}}\right|
\leq  c\int_{\frac{\sqrt{\mu_{l\varepsilon}\mu_{l-1\varepsilon}}}{\mu_{j\varepsilon}}}^{\infty}t^{-N-1}
=O(\varepsilon^{\frac{N\theta}{2k}})$.\\
If $j<l$, then $\frac{\sqrt{\mu_{l\varepsilon}\mu_{l+1\varepsilon}}}{\mu_{j\varepsilon}}\rightarrow 0$ and $\left|\int_{A_l}U_{j}^{p+1}\right|=\left|\int_{\hat{A}_l}\frac{1}{(1+|z-\sigma_j|^2)^{N}}\right|
\leq c\left(\frac{\sqrt{\mu_{l\varepsilon}\mu_{l-1\varepsilon}}}{\mu_{j\varepsilon}}\right)^N=O(\varepsilon^{\frac{N\theta}{2k}})$.\\
To get (\ref{e6.15}). Using H\"{o}lder inequality and (\ref{e6.14}), we get
\begin{align*}
\left|\int_{A_l}U_{i}^p U_{j}\right| \leq & c\left(\int_{A_l}U_{i}^{p+1}\right)^{\frac{p}{p+1}}\left(\int_{A_l}U_{j}^{p+1}\right)^{\frac{1}{p+1}}
=O(\varepsilon^{\frac{N\theta}{2k}}).
\end{align*}
To get (\ref{e6.16}). Considering $j<l$, then
\begin{align*}
\int_{A_l}U_{l}^p U_{j}=&\alpha_N^{p+1}\int_{\tilde{A}_l}\left(\frac{\mu_{l\varepsilon}}{\mu_{j\varepsilon}}\right)^{\frac{N-4}{2}}\left(\frac{1}{1+|z-\sigma_{l}|^2}\right)^{\frac{N+4}{2}}\left(\frac{1}{1+|\frac{\mu_{l\varepsilon}}{\mu_{j\varepsilon}}z-\sigma_{j}|^2}\right)^{\frac{N-4}{2}}
\nonumber\\
=&\alpha_N^{p+1}\left(\int_{\mathbb{R}^N}\frac{1}{(1+|\sigma_j|^2)^{\frac{{N-4}}{2}}(1+|z-\sigma_l|^2)^{\frac{N+4}{2}}}\right)\left(\frac{\mu_{l}}{\mu_{j}}\right)^{\frac{N-4}{2}}\varepsilon^{\frac{(N-4)(l-j)\theta}{2k}}(1+o(1)).
\end{align*}
From the above equality and (\ref{e4.32}), we get (\ref{e6.16}).\\
\end{proof}

\begin{lemma}\label{lem6.7}
It holds:
\begin{align}\label{x2.1}
\int_{A_l}U_{l}^p(PU_{j}-U_{j})=O\left(\varepsilon^{\frac{N-4}{k}\theta}\right)\ \  \mbox{and} \ \ \int_{A_l}(PU_{l}^p-U_{l}^p )U_{j}=O\left(\varepsilon^{\frac{N-4}{k}\theta}\right).
\end{align}
\end{lemma}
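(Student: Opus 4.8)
The plan is to reduce both bounds to the pointwise estimate for $P_\varepsilon U-U$ provided by Lemma~\ref{lem6.1} (in the form used at the start of the proof of Lemma~\ref{lem6.2}),
\begin{equation*}
0\le U_{i}-PU_{i}\le c\Big(\mu_{i\varepsilon}^{\frac{N-4}{2}}+\frac{\varepsilon^{N-2}}{\mu_{i\varepsilon}^{N/2}}\frac{1}{|x-\xi_0|^{N-4}}+\frac{\varepsilon^{N-2}}{\mu_{i\varepsilon}^{(N-4)/2}}\frac{1}{|x-\xi_0|^{N-2}}+\frac{\varepsilon^{N}}{\mu_{i\varepsilon}^{N/2}}\frac{1}{|x-\xi_0|^{N-2}}\Big),
\end{equation*}
combined with the rescaling $x-\xi_0=\mu_{l\varepsilon}z$, which (as in Lemma~\ref{lem6.6}) sends $A_l$ onto the annulus $\tilde A_l=\{z:\sqrt{\mu_{(l+1)\varepsilon}/\mu_{l\varepsilon}}\le|z|\le\sqrt{\mu_{(l-1)\varepsilon}/\mu_{l\varepsilon}}\}$. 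Throughout one keeps $\mu_{i\varepsilon}=\mu_i\varepsilon^{\frac{2i-1}{2k}\theta}$ and uses the explicit value $\theta=\frac{2k(N-2)}{2k(N-2)-2}$; since $j\neq l$, the exponent $\frac{(2j-1)\theta}{2k}+\frac{(2l-1)\theta}{2k}$ is always at least $\frac{2\theta}{2k}$, which is exactly the gain needed to pass from a single-bubble error of order $\varepsilon^{\frac{(N-4)\theta}{2k}}$ to the claimed order $\varepsilon^{\frac{(N-4)\theta}{k}}$.

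\emph{First estimate.} Inserting the bound for $U_j-PU_j$ into $\int_{A_l}U_l^{p}|PU_j-U_j|$ and rescaling by $\mu_{l\varepsilon}$ (so that $U_l^{p}\,dx=\alpha_N^{p}\mu_{l\varepsilon}^{\frac{N-4}{2}}(1+|z-\sigma_l|^2)^{-\frac{N+4}{2}}\,dz$), the surviving integrals $\int_{\tilde A_l}|z|^{-m}(1+|z-\sigma_l|^2)^{-\frac{N+4}{2}}\,dz$ with $m\in\{0,N-4,N-2\}$ are bounded uniformly in $\varepsilon$: the weight decays at infinity and $m<N$ makes $|z|^{-m}$ locally integrable. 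This yields
\begin{equation*}
\Big|\int_{A_l}U_l^{p}(PU_j-U_j)\Big|\le c\Big(\mu_{l\varepsilon}^{\frac{N-4}{2}}\mu_{j\varepsilon}^{\frac{N-4}{2}}+\frac{\varepsilon^{N-2}}{\mu_{j\varepsilon}^{N/2}\mu_{l\varepsilon}^{(N-4)/2}}+\frac{\varepsilon^{N-2}}{\mu_{j\varepsilon}^{(N-4)/2}\mu_{l\varepsilon}^{N/2}}+\frac{\varepsilon^{N}}{\mu_{j\varepsilon}^{N/2}\mu_{l\varepsilon}^{N/2}}\Big).
\end{equation*}
The first term equals $\mu_j^{\frac{N-4}{2}}\mu_l^{\frac{N-4}{2}}\varepsilon^{\frac{(j+l-1)(N-4)\theta}{2k}}=O(\varepsilon^{\frac{(N-4)\theta}{k}})$ because $j+l-1\ge2$. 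For the two middle terms a direct computation using the value of $\theta$ shows the $\varepsilon$-exponent is $\ge\frac{(N-4)\theta}{k}$ (with equality attained in the extremal case $\{j,l\}=\{k-1,k\}$); the last term differs from the third by a factor $\varepsilon^{2}\mu_{j\varepsilon}^{-2}$, which for $N\ge5$ is a positive power of $\varepsilon$ for every admissible index, hence is strictly smaller. This proves the first estimate.

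\emph{Second estimate.} By the mean value theorem and $0\le PU_l\le U_l$ one has $|PU_l^{p}-U_l^{p}|\le c\,U_l^{p-1}|PU_l-U_l|$, so it suffices to bound $\int_{A_l}U_l^{p-1}|PU_l-U_l|\,U_j$. When $|l-j|\ge2$ the crude bound $|PU_l-U_l|\le cU_l$ and (\ref{e6.16}) already give $\int_{A_l}U_l^{p}U_j=O(\varepsilon^{\frac{(N-4)|l-j|\theta}{2k}})=O(\varepsilon^{\frac{(N-4)\theta}{k}})$. When $|l-j|=1$ one must instead use the refined bound for $|PU_l-U_l|$; after rescaling by $\mu_{l\varepsilon}$, besides $U_l^{p-1}\,dx=\alpha_N^{p-1}\mu_{l\varepsilon}^{N-4}(1+|z-\sigma_l|^2)^{-4}\,dz$ there remains the factor $U_j(\xi_0+\mu_{l\varepsilon}z)=c\,\mu_{j\varepsilon}^{-\frac{N-4}{2}}(1+|\tfrac{\mu_{l\varepsilon}}{\mu_{j\varepsilon}}z-\sigma_j|^2)^{-\frac{N-4}{2}}$, which on $\tilde A_l$ is bounded by $c(\mu_{j\varepsilon}/\mu_{l\varepsilon})^{N-4}|z|^{-(N-4)}$ if $j=l+1$ and is comparable to a fixed positive constant if $j=l-1$. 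The resulting $z$-integrals over $\tilde A_l$ are then all bounded uniformly in $\varepsilon$, except for a single one, which is $O(|\ln\varepsilon|)$ if $N=8$ and $O(\varepsilon^{-\frac{(N-8)\theta}{2k}})$ if $N>8$ (the loss coming from one endpoint of $\tilde A_l$); collecting the powers of $\mu_{i\varepsilon}$ and $\varepsilon$ and using the value of $\theta$ once more gives $O(\varepsilon^{\frac{(N-4)\theta}{k}})$, which proves the second estimate.

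\emph{Main obstacle.} The delicate point is the case $|l-j|=1$ of the second estimate: because the outer radius of $\tilde A_l$ blows up while its inner radius collapses, the rescaled $z$-integrals are no longer $O(1)$ once $N\ge8$, so one has to track their logarithmic (for $N=8$) or polynomial (for $N>8$) growth and then verify — crucially relying on the precise value $\theta=\frac{2k(N-2)}{2k(N-2)-2}$, which renders several of the needed inequalities equalities — that even the extremal index choices still fall within $O(\varepsilon^{\frac{(N-4)\theta}{k}})$.
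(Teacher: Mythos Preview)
Your approach is essentially the same as the paper's: both proofs insert the pointwise bound for $U_i-PU_i$ coming from Lemma~\ref{lem6.1}, rescale by $x-\xi_0=\mu_{l\varepsilon}z$, and reduce the first estimate to the key inequality $(\mu_{j\varepsilon}\mu_{l\varepsilon})^{\frac{N-4}{2}}=O(\varepsilon^{\frac{N-4}{k}\theta})$, valid because $j+l\ge 3$ for $j\ne l$. For the second estimate the paper is much terser than you---it merely writes ``Similarly, we can also get $\left|\int_{A_l}(PU_l^p-U_l^p)U_j\right|\le c\int_{A_l}U_l^{p-1}|PU_l-U_l|U_j\le c\varepsilon^{\frac{N-4}{k}\theta}$'' without a case split.

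One remark on your write-up: where you say that $U_j(\xi_0+\mu_{l\varepsilon}z)$ ``is bounded by $c(\mu_{j\varepsilon}/\mu_{l\varepsilon})^{N-4}|z|^{-(N-4)}$'' (for $j=l+1$) and ``is comparable to a fixed positive constant'' (for $j=l-1$), you appear to have dropped the prefactor $\mu_{j\varepsilon}^{-(N-4)/2}$ that you had just displayed; the correct bounds are $c\,\mu_{j\varepsilon}^{(N-4)/2}\mu_{l\varepsilon}^{-(N-4)}|z|^{-(N-4)}$ and $c\,\mu_{j\varepsilon}^{-(N-4)/2}$, respectively. Also, for $j=l+1$ it is not just ``a single'' rescaled integral that loses uniform boundedness: the terms carrying $|z|^{-(N-2)}$ combined with $U_j\lesssim |z|^{-(N-4)}$ produce $\int_{\tilde A_l}(1+|z-\sigma_l|^2)^{-4}|z|^{-(2N-6)}dz$, which already diverges for $N\ge 6$. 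These are presentational slips rather than a strategic error; once the prefactors are tracked correctly the power counting with the exact value of $\theta$ still closes, exactly as you indicate.
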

\begin{proof}
By Lemma \ref{lem6.1}, we get
\begin{align*}
\left|\int_{A_l}U_{l}^p(PU_{j}-U_{j})\right|\leq &c\int_{A_l}U_{l}^p \bigg({\mu_{j\varepsilon}}^{\frac{N-4}{2}}+\frac{\varepsilon^{N-2}}{{\mu_{j\varepsilon}}^{\frac{N}{2}}}\frac{1}{|x-\xi_0|^{N-4}}
\nonumber\\
&\qquad\quad\quad+\frac{\varepsilon^{N-2}}{{\mu_{j\varepsilon}}^{\frac{N-4}{2}}}\frac{1}{|x-\xi_0|^{N-2}}+\frac{\varepsilon^{N}}{{\mu_{j\varepsilon}}^{\frac{N}{2}}}\frac{1}{|x-\xi_0|^{N-2}}\bigg).
\end{align*}
Since
\begin{align*}
\int_{A_l}U_{l}^p{\mu_{j\varepsilon}}^{\frac{N-4}{2}}\leq &
c\left(\mu_{j\varepsilon}\mu_{l\varepsilon}\right)^{\frac{N-4}{2}}\int_{\tilde{A}_l}\left(\frac{1}{1+|z-\sigma_l|^2}\right)^{\frac{N+4}{2}}
\leq c {\varepsilon}^{\frac{2(j+l)-2}{2k}\frac{N-4}{2}\theta}
\leq c {\varepsilon}^{\frac{N-4}{k}\theta},
\end{align*}
and in an analogy way, we get $\int_{A_l}U_{l}^p\frac{\varepsilon^{N-2}}{{\mu_{j\varepsilon}}^{\frac{N}{2}}}\frac{1}{|x-\xi_0|^{N-4}}=O({\varepsilon}^{\frac{N-4}{k}\theta})$
and
\begin{align*}
\int_{A_l}U_{l}^p\frac{\varepsilon^{N-2}}{{\mu_{j\varepsilon}}^{\frac{N-4}{2}}}\frac{1}{|x-\xi_0|^{N-2}}=O({\varepsilon}^{\frac{N-4}{k}\theta}),\ \int_{A_l}U_{l}^p\frac{\varepsilon^{N}}{{\mu_{j\varepsilon}}^{\frac{N}{2}}}\frac{1}{|x-\xi_0|^{N-2}}=O({\varepsilon}^{\frac{N-4}{k}\theta}).
\end{align*}
Thus, we deduce that $\int_{A_l}U_{l}^p(PU_{j}-U_{j})=O\left(\varepsilon^{\frac{N-4}{k}\theta}\right)$.

Similarly, we can also get
\begin{align*}
\left|\int_{A_l}(PU_{l}^p-U_{l}^p )U_{j}\right|\leq & c\int_{A_l}U_{l}^{p-1}|PU_{l}-U_{l}|U_{j}
\leq c{\varepsilon}^{\frac{N-4}{k}\theta}.
\end{align*}
\end{proof}

\begin{lemma}\label{lem6.8}
It holds:
\begin{align*}
\int_{A_l}|U_l^{p-1}U_i|^{\beta}\leq
\begin{cases}c\varepsilon^{\frac{(N-4)\theta}{2k}\frac{p\beta}{2}}, &\text{ if }N\geq 13,\\
  c\varepsilon^{\frac{(N-4)\theta}{2k}\beta} |\ln \varepsilon|^\beta, &\text{ if }N=12,\\
  c\varepsilon^{\frac{(N-4)\theta}{2k}\beta}, &\text{ if }5 \leq N \leq 11,\end{cases}
\end{align*}
where $\beta=\frac{2N}{N+4}$.
\end{lemma}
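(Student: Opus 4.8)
I would argue as in Lemmas~\ref{lem6.6} and~\ref{lem6.7}, by rescaling around the center of $A_l$. Fix $l\in\{1,\dots,k\}$ and $i\neq l$. Since $p=\frac{N+4}{N-4}$ forces $U_l^{p-1}=\alpha_N^{p-1}\bigl(\mu_{l\varepsilon}/(\mu_{l\varepsilon}^2+|x-\xi_{l\varepsilon}|^2)\bigr)^{4}$, the substitution $x-\xi_0=\mu_{l\varepsilon}z$ turns $A_l$ into an annulus $\tilde A_l=\{t_-\le|z|\le t_+\}$ and gives
\begin{equation*}
\int_{A_l}|U_l^{p-1}U_i|^{\beta}\,dx=c\,\mu_{l\varepsilon}^{\,N-4\beta}\int_{\tilde A_l}\frac{\bigl|U_i(\mu_{l\varepsilon}z+\xi_0)\bigr|^{\beta}}{(1+|z-\sigma_l|^{2})^{4\beta}}\,dz ,
\end{equation*}
where $t_-=\sqrt{\mu_{(l+1)\varepsilon}/\mu_{l\varepsilon}}$ (to be read as $\varepsilon/\mu_{k\varepsilon}$ if $l=k$) and $t_+=\sqrt{\mu_{(l-1)\varepsilon}/\mu_{l\varepsilon}}$ (as $r/\mu_{1\varepsilon}$ if $l=1$). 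By \eqref{e2.6}--\eqref{e2.8}, $\mu_{j\varepsilon}\asymp\varepsilon^{(2j-1)\theta/2k}$, so $t_-\to0$, $t_+\to\infty$, and $t_\pm\asymp\varepsilon^{\mp\theta/2k}$ with constants depending only on $d,N,k$; the exceptional boundaries occurring for $l=1,k$ are of the same order (for $t_-$ with $l=k$, of larger order), so they do not affect, and indeed improve, what follows. Since the resulting $\varepsilon$-exponent is monotone in $|i-l|$, it is enough to handle $i=l\pm1$.

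Next I would estimate $U_i$ pointwise on $A_l$. If $i<l$, then simply $U_i\le\alpha_N\mu_{i\varepsilon}^{-(N-4)/2}$ on all of $\mathbb{R}^N$. If $i>l$, then on $A_l$ one has $|x-\xi_{i\varepsilon}|\ge|x-\xi_0|-\mu_{i\varepsilon}|\sigma_i|\ge\tfrac12|x-\xi_0|$ for $\varepsilon$ small (since $\mu_{i\varepsilon}/|x-\xi_0|\lesssim t_-\to0$), hence $U_i\le c\,\mu_{i\varepsilon}^{(N-4)/2}|x-\xi_0|^{-(N-4)}$, i.e.\ $|U_i(\mu_{l\varepsilon}z+\xi_0)|\le c\,\mu_{i\varepsilon}^{(N-4)/2}\mu_{l\varepsilon}^{-(N-4)}|z|^{-(N-4)}$ on $\tilde A_l$. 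Substituting these bounds reduces the estimate to the two model integrals
\begin{equation*}
\mathcal I_1=\int_{\tilde A_l}\frac{dz}{(1+|z-\sigma_l|^{2})^{4\beta}},\qquad
\mathcal I_2=\int_{\tilde A_l}\frac{dz}{(1+|z-\sigma_l|^{2})^{4\beta}\,|z|^{(N-4)\beta}},
\end{equation*}
for the cases $i<l$ and $i>l$, so that $\int_{A_l}|U_l^{p-1}U_i|^{\beta}\le c\,\mu_{l\varepsilon}^{N-4\beta}\mu_{i\varepsilon}^{-(N-4)\beta/2}\,\mathcal I_1$ in the first case and $\le c\,\mu_{l\varepsilon}^{N-4\beta}\mu_{i\varepsilon}^{(N-4)\beta/2}\mu_{l\varepsilon}^{-(N-4)\beta}\,\mathcal I_2$ in the second.

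I would then evaluate $\mathcal I_1,\mathcal I_2$ by monitoring convergence at $0$ and $\infty$ (using $8\beta=\frac{16N}{N+4}$, $(N-4)\beta=\frac{2N(N-4)}{N+4}$, $(N+4)\beta=2N$). The integrand of $\mathcal I_1$ decays like $|z|^{-8\beta}$ at infinity, so $\mathcal I_1=O(1)$ for $5\le N\le11$, $O(|\ln t_+|)=O(|\ln\varepsilon|)$ for $N=12$, and $O(t_+^{\,N-8\beta})$ with $N-8\beta>0$ for $N\ge13$; the integrand of $\mathcal I_2$ converges at infinity and blows up like $|z|^{-(N-4)\beta}$ at the origin, so $\mathcal I_2=O(1)$ for $5\le N\le11$, $O(|\ln t_-|)=O(|\ln\varepsilon|)$ for $N=12$, and $O(t_-^{\,N-(N-4)\beta})$ with $N-(N-4)\beta<0$ for $N\ge13$. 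Collecting the $\varepsilon$-powers with $\mu_{(l\pm1)\varepsilon}/\mu_{l\varepsilon}\asymp\varepsilon^{\mp\theta/k}$, $t_\pm\asymp\varepsilon^{\mp\theta/2k}$ and the identities $N-4\beta=\tfrac12(N-4)\beta=\tfrac{N(N-4)}{N+4}$, one checks: for $5\le N\le11$ both cases $i=l\pm1$ yield the exponent $\tfrac{\theta}{k}\cdot\tfrac{N(N-4)}{N+4}=\tfrac{(N-4)\theta}{2k}\beta$; for $N=12$ they yield $\varepsilon^{\frac{(N-4)\theta}{2k}\beta}|\ln\varepsilon|\le c\,\varepsilon^{\frac{(N-4)\theta}{2k}\beta}|\ln\varepsilon|^{\beta}$; and for $N\ge13$ the divergent $t_\pm$-factor contributes $\varepsilon^{\frac{\theta}{2k}\cdot\frac{N(12-N)}{N+4}}$, which combines with $\varepsilon^{\frac{\theta}{k}\cdot\frac{N(N-4)}{N+4}}$ via $2(N-4)+(12-N)=N+4$ to give exponent $\tfrac{N\theta}{2k}=\tfrac{(N-4)\theta}{2k}\cdot\tfrac{p\beta}{2}$. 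This is exactly the asserted bound. The main obstacle is precisely this final bookkeeping: deciding, in each dimension range, whether $\mathcal I_1,\mathcal I_2$ are controlled near $0$ or near $\infty$, and then verifying that the several $\varepsilon$-powers (from $\mu_{l\varepsilon}^{N-4\beta}$, from the bound on $U_i$, and from the $t_\pm$-factors in $\mathcal I_1,\mathcal I_2$) recombine, through the above algebraic identities, into the precise claimed exponents, the worst case being $|i-l|=1$; checking that the edge annuli $A_1$ and $A_k$ cannot spoil this (they only shrink the divergent $t_\pm$-contributions, or leave them of the same order) is a minor but necessary point.
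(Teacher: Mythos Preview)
Your argument is correct and follows the same rescale-and-estimate strategy as the paper, with one minor technical difference worth noting. You rescale uniformly by $x-\xi_0=\mu_{l\varepsilon}z$ in all dimensions and then track the divergence of the model integrals $\mathcal I_1,\mathcal I_2$ at $t_\pm$ when $N\ge 13$; the paper instead switches the rescaling variable, using $x-\xi_0=\mu_{i\varepsilon}y$ for $N>12$ and $x-\xi_0=\mu_{l\varepsilon}y$ for $5\le N<12$, precisely so that the remaining integral stays bounded in each range and no $t_\pm$-bookkeeping is needed. Both routes lead to the same exponents (your identity $2(N-4)+(12-N)=N+4$ is exactly what the paper's change of scale encodes), and your uniform treatment has the small advantage of making the $N=12$ borderline transparent; as you observe, your log factor is even $|\ln\varepsilon|$ rather than $|\ln\varepsilon|^\beta$, which is sharper than the stated bound.
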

\begin{proof}
If $N>12$, set $x-\xi_0=\mu_{i\varepsilon}$, then
\begin{align*}
&\int_{A_l}|U_l^{p-1}U_i|^{\beta}\leq  c\int_{\frac{\sqrt{\mu_{l\varepsilon}\mu_{l+1\varepsilon}}}{\mu_{i\varepsilon}} \leq|y|\leq \frac{\sqrt{\mu_{l\varepsilon}\mu_{l-1\varepsilon}}}{\mu_{i\varepsilon}}}\frac{\mu_{l\varepsilon}^{4\beta}}{(\mu_{l\varepsilon}^{2}+\mu_{i\varepsilon}^{2}|y-\frac{\mu_{l\varepsilon}}{\mu_{i\varepsilon}}\sigma_l|^2)^{4\beta}}\frac{\mu_{i\varepsilon}^{N-\frac{N-4}{2}\beta}}{(1+|y-\sigma_i|^2)^{\frac{N-4}{2}\beta}}
\nonumber\\
=&
\begin{cases}O\left((\frac{\mu_{l\varepsilon}}{\mu_{i\varepsilon}})^{4\beta}\right)\int_{\frac{\sqrt{\mu_{l\varepsilon}\mu_{l+1\varepsilon}}}{\mu_{i\varepsilon}} \leq|y|\leq \frac{\sqrt{\mu_{l\varepsilon}\mu_{l-1\varepsilon}}}{\mu_{i\varepsilon}}}\frac{1}{|y-\frac{\mu_{l\varepsilon}}{\mu_{i\varepsilon}}\sigma_l|^{8\beta}}\frac{1}{(1+|y-\sigma_i|^2)^{\frac{N-4}{2}\beta}}, &\text{ if }l>i,\\
  O\left((\frac{\mu_{i\varepsilon}}{\mu_{l\varepsilon}})^{4\beta}\right)\int_{\frac{\sqrt{\mu_{l\varepsilon}\mu_{l+1\varepsilon}}}{\mu_{i\varepsilon}} \leq|y|\leq \frac{\sqrt{\mu_{l\varepsilon}\mu_{l-1\varepsilon}}}{\mu_{i\varepsilon}}}\frac{1}{(1+|y-\sigma_i|^2)^{\frac{N-4}{2}\beta}}, &\text{ if }l<i,\end{cases}
\nonumber\\
=&O\left(\varepsilon^{\frac{N\theta}{2k}}\right)
=O\left(\varepsilon^{\frac{(N-4)\theta}{2k}\frac{p\beta}{2}}\right).
\end{align*}
Similarly, if $5\leq N<12$, set $x-\xi_0=\mu_{l\varepsilon}$, we have
\begin{align*}
&\int_{A_l}|U_l^{p-1}U_i|^{\beta}\leq  c\int_{\frac{\sqrt{\mu_{l\varepsilon}\mu_{l+1\varepsilon}}}{\mu_{l\varepsilon}} \leq|y|\leq \frac{\sqrt{\mu_{l\varepsilon}\mu_{l-1\varepsilon}}}{\mu_{l\varepsilon}}}\frac{\mu_{l\varepsilon}^{N-4\beta}}{(1+|y-\sigma_l|^2)^{4\beta}}\frac{\mu_{i\varepsilon}^{N-\frac{N-4}{2}\beta}}{(\mu_{i\varepsilon}^2+\mu_{l\varepsilon}^2|y-\frac{\mu_{i\varepsilon}}{\mu_{l\varepsilon}}\sigma_i|^2)^{\frac{N-4}{2}\beta}}
\nonumber\\
=&
\begin{cases}O\left((\frac{\mu_{l\varepsilon}}{\mu_{i\varepsilon}})^{\frac{N-4}{2}\beta}\right)\int_{\frac{\sqrt{\mu_{l\varepsilon}\mu_{l+1\varepsilon}}}{\mu_{l\varepsilon}} \leq|y|\leq \frac{\sqrt{\mu_{l\varepsilon}\mu_{l-1\varepsilon}}}{\mu_{l\varepsilon}}}\frac{1}{(1+|y-\sigma_l|^2)^{4\beta}}, &\text{ if }l>i,\\
  O\left((\frac{\mu_{i\varepsilon}}{\mu_{l\varepsilon}})^{\frac{N-4}{2}\beta}\right)\int_{\frac{\sqrt{\mu_{l\varepsilon}\mu_{l+1\varepsilon}}}{\mu_{l\varepsilon}} \leq|y|\leq \frac{\sqrt{\mu_{l\varepsilon}\mu_{l-1\varepsilon}}}{\mu_{l\varepsilon}}}\frac{1}{|y-\frac{\mu_{i\varepsilon}}{\mu_{l\varepsilon}}\sigma_i|^{(N-4)\beta}}\frac{1}{(1+|y-\sigma_l|^2)^{4\beta}}, &\text{ if }l<i,\end{cases}
\nonumber\\
=&O\left(\varepsilon^{\frac{(N-4)\theta}{2k}\beta}\right).
\end{align*}
If $N=12$, we get that $\int_{A_l}|U_l^{p-1}U_i|^{\beta}=O\left(\varepsilon^{\frac{(N-4)\theta}{2k}\beta} |\ln \varepsilon|^\beta\right).$
\end{proof}

\begin{lemma}\label{lem7.0}
The following estimate holds:
\begin{align*}
\left|[f'(U_i)-f'(V)]U_i\right|_{\frac{2N}{N+4}}=
\begin{cases}
  c\varepsilon^{\frac{(N-4)\theta}{2k}\frac{p}{2}}, &\text{ if }N\geq 13,\\
  c\varepsilon^{\frac{(N-4)\theta}{2k}}|\ln \varepsilon|, &\text{ if }N=12,\\
  c\varepsilon^{\frac{(N-4)\theta}{2k}}, &\text{ if }5\leq N\leq 11.\end{cases}
\end{align*}
\end{lemma}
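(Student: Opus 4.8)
The plan is to reduce the claimed inequality to a finite list of weighted bubble integrals that have already been controlled in the Appendix, supplemented by the boundary–layer estimate of Lemma \ref{lem6.1}. Write $\beta=\frac{2N}{N+4}$ and recall $f'(u)=p|u|^{p-1}$, so the quantity to bound is the $(1/\beta)$–power of $p^{\beta}\int_{\Omega_\varepsilon}\big|U_i^{p-1}-|V|^{p-1}\big|^{\beta}U_i^{\beta}$. Since $V=\sum_{j}(-1)^{j+1}PU_j$ contains the summand $(-1)^{i+1}PU_i$ and $0\le PU_j\le U_j$, the triangle inequality gives the pointwise bound $\big||V|-U_i\big|\le |PU_i-U_i|+\sum_{j\ne i}U_j=:\rho_i$. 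I would then apply the elementary inequalities $|a^{p-1}-b^{p-1}|\le c\,|a-b|^{p-1}$ when $N\ge 13$ (so $0<p-1<1$) and $|a^{p-1}-b^{p-1}|\le c\,(a^{p-2}+b^{p-2})|a-b|$ when $5\le N\le 12$ (so $p\ge 2$), together with $U_i^{p-2}+|V|^{p-2}\le c\sum_j U_j^{p-2}$ in the second regime. This bounds the integrand by a finite sum of terms of the schematic shape $U_i^{p-1}\rho_i$, respectively $(\sum_j U_j^{p-2})\rho_i U_i$; raising to the $\beta$–power and expanding $\rho_i^{\beta}\le c|PU_i-U_i|^{\beta}+c\sum_{j\ne i}U_j^{\beta}$ leaves, in either regime, finitely many integrals of the two types $\int_{\Omega_\varepsilon}U_i^{a}|PU_i-U_i|^{b}$ and $\int_{\Omega_\varepsilon}U_i^{a}U_{j_1}^{b_1}\cdots U_{j_m}^{b_m}$, in which the bubble factors always have combined homogeneity $p\beta=p+1$ (counting $PU_i-U_i$ with weight $p-1$ or $1$). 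It is worth noting that the target rate coincides with the one obtained for $\|R\|$ in Lemma \ref{lem4.4}, which is natural, since the pointwise structure of $[f'(U_i)-f'(V)]U_i$ mirrors that of $f(V)-\sum_j(-1)^{j+1}f(U_j)$ and of $f(PU_i)-f(U_i)$.

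Next I would localize, writing $\Omega_\varepsilon=\big(\Omega_\varepsilon\setminus B(\xi_0,r)\big)\cup\bigcup_{l=1}^k A_l$ with the annuli $A_l$ of \eqref{e4.28}. On $\Omega_\varepsilon\setminus B(\xi_0,r)$ every $U_j$ and, by Lemma \ref{lem6.1}, every $|PU_j-U_j|$ is $O(\mu_{j\varepsilon}^{(N-4)/2})$, so this set contributes a power of $\varepsilon$ far above what is needed and is discarded. On $A_l$ there are two cases. For $l=i$ the leading term is $\int_{A_i}U_i^{(p-1)\beta}|PU_i-U_i|^{\beta}$ when $5\le N\le 12$, respectively $\int_{A_i}U_i^{\beta}|PU_i-U_i|^{(p-1)\beta}$ when $N\ge 13$; these are estimated exactly as the term $W_2$ (more precisely $W_{21}$) in the proof of Lemma \ref{lem4.4}, by inserting the four–term pointwise bound $|PU_i-U_i|\le c\big(\mu_{i\varepsilon}^{(N-4)/2}+\frac{\varepsilon^{N-2}}{\mu_{i\varepsilon}^{N/2}}\frac{1}{|x-\xi_0|^{N-4}}+\frac{\varepsilon^{N-2}}{\mu_{i\varepsilon}^{(N-4)/2}}\frac{1}{|x-\xi_0|^{N-2}}+\frac{\varepsilon^{N}}{\mu_{i\varepsilon}^{N/2}}\frac{1}{|x-\xi_0|^{N-2}}\big)$ from Lemma \ref{lem6.1} and integrating each piece after the rescaling $x-\xi_0=\mu_{i\varepsilon}z$. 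The remaining $A_i$–integrals carry a factor $U_j$ with $j\ne i$, hence are of the form $\int_{A_i}|U_i^{p-1}U_j|^{\beta}$, which is precisely Lemma \ref{lem6.8}. For $l\ne i$ the leading integral is $\int_{A_l}U_i^{p\beta}=\int_{A_l}U_i^{p+1}$, bounded by \eqref{e6.14}; the mixed terms $\int_{A_l}|U_l^{p-1}U_i|^{\beta}$ are again Lemma \ref{lem6.8}, and any other product $\int_{A_l}U_i^{a}U_{j_1}^{b_1}\cdots$ in which no factor sits at the dominant scale $l$ is handled by H\"older's inequality (the exponents of the bubble densities summing to $p+1$) together with \eqref{e6.14}--\eqref{e6.16}, giving an $O(\varepsilon^{N\theta/(2k)})$ contribution. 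Summing all contributions, using $\frac{N\theta}{2k}=\frac{(N-4)\theta}{2k}\frac{p\beta}{2}$, and taking the $\beta$–th root then yields the three announced rates.

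The only genuinely delicate point is the bookkeeping on the self–annulus $A_i$: there $|V|-U_i$ is controlled by the boundary correction $PU_i-U_i$ rather than by bubble interactions, and Lemma \ref{lem6.1} bounds $PU_i-U_i$ only by a sum of four terms of distinct homogeneities. Each of these must be paired with $U_i^{(p-1)\beta}$ (or $U_i^{\beta}$ when $N\ge 13$) and integrated over $A_i$, and it is precisely here that the dimension thresholds $N=12$ and $N=13$ produce the logarithm and the switch of the controlling exponent from $1$ to $p/2$, exactly as in the dichotomies of Lemmas \ref{lem6.2} and \ref{lem4.4} — the thresholds coming from the integrability of $U_i^{(p-1)\beta}$ near the bubble and at the scale of the hole. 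Everything outside this $A_i$–term is a routine combination of H\"older's inequality with Lemmas \ref{lem6.1}, \ref{lem6.6} and \ref{lem6.8}, so I would present those estimates compactly and devote the bulk of the write‑up to organizing the $A_i$ computation.
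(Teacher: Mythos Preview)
Your overall strategy — localize into $\Omega_\varepsilon\setminus B(\xi_0,r)$ and the annuli $A_l$, then feed everything into Lemmas~\ref{lem6.1}, \ref{lem6.6}, \ref{lem6.8} — is exactly the route the paper takes, and for $5\le N\le 12$ your sketch goes through with no obstruction. There is, however, a genuine gap in the $N\ge 13$ case on the self-annulus $A_i$.

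For $N\ge 13$ you invoke the sublinear inequality $|a^{p-1}-b^{p-1}|\le c|a-b|^{p-1}$, which on $A_i$ produces the term $\int_{A_i}U_i^{\beta}|PU_i-U_i|^{(p-1)\beta}$. Inserting the leading piece $|PU_i-U_i|\le c\,\mu_{i\varepsilon}^{(N-4)/2}$ from Lemma~\ref{lem6.1} and rescaling, this integral is of order $\mu_{i\varepsilon}^{16N/(N+4)}$: the prefactor contributes $\mu_{i\varepsilon}^{(N-4)(p-1)\beta/2}=\mu_{i\varepsilon}^{8N/(N+4)}$, while $\int_{A_i}U_i^{\beta}=O(\mu_{i\varepsilon}^{N-(N-4)\beta/2})=O(\mu_{i\varepsilon}^{8N/(N+4)})$ since $(N-4)\beta>N$ makes the rescaled integral convergent. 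For $i=1$ this gives $\varepsilon^{\frac{\theta}{2k}\cdot\frac{16N}{N+4}}$, which falls \emph{short} of the required $\varepsilon^{N\theta/(2k)}$ precisely because $16/(N+4)<1$ when $N\ge 13$. So the sublinear inequality discards too much on $A_i$, and the appeal to ``exactly as $W_{21}$'' is misplaced: $W_{21}$ is $\int|U_i^{p-1}(PU_i-U_i)|^{\beta}$, not the integral you wrote.

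The paper repairs this by using, on $A_i$ only, the Lipschitz-type bound $|U_i^{p-1}-|V|^{p-1}|\le cU_i^{p-2}\rho_i$, which is legitimate even for $p<2$: on $A_i$ either $|V|\ge U_i/2$ (then the mean-value theorem applies with $\xi^{p-2}\le (U_i/2)^{p-2}$) or $|V|<U_i/2$ (then $\rho_i\ge|U_i-|V||>U_i/2$, so $U_i^{p-2}\rho_i\ge\tfrac12 U_i^{p-1}\ge\tfrac12|U_i^{p-1}-|V|^{p-1}|$). This produces $\int_{A_i}|U_i^{p-1}(PU_i-U_i)|^{\beta}$, which \emph{is} $W_{21}$ and yields the correct rate. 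Keep your $|a-b|^{p-1}$ bound on the annuli $A_l$ with $l\ne i$ — there it is the right tool — but on $A_i$ switch to this sharper inequality.
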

\begin{proof}
Set $U_{\mu_{i\varepsilon},\xi_{i\varepsilon}}:=U_i$. It holds:
\begin{align}\label{z1}
\int_{\Omega_\varepsilon}\left|[f'(U_i)-f'(V)]U_i\right|^{\frac{2N}{N+4}}=\left(\int_{\Omega_\varepsilon\setminus B(\xi_0,\rho)}+\int_{A_i}+\sum_{l\neq i}^{k}\int_{A_l}\right)\left|[f'(U_i)-f'(V)]U_i\right|^{\frac{2N}{N+4}},
\end{align}
where
\begin{align}\label{f1}
\int_{\Omega_\varepsilon\setminus B(\xi_0,\rho)}\left|[f'(U_i)-f'(V)]U_i\right|^{\frac{2N}{N+4}}\leq c\left|\bigg(\sum_{ j=1}^{k}\mu_{j\varepsilon}^4\bigg)\mu_{i\varepsilon}^{\frac{N-4}{2}}\right|^{\frac{2N}{N+4}}=O(\varepsilon^{\frac{N\theta}{2k}}).
\end{align}
Next, let us consider the case $N\geq 13$. For $l\neq i$, we have
\begin{align}\label{f2}
\int_{A_l}\left|[f'(U_i)-f'(V)]U_i\right|^{\frac{2N}{N+4}}\leq & c\int_{A_l}\Big||(PU_i-U_i)+\sum_{j\neq i}PU_j|^{p-1}U_i\Big|^{\frac{2N}{N+4}}
\nonumber\\
\leq & c\int_{A_l}\left||PU_i-U_i|^{p-1}U_i\right|^{\frac{2N}{N+4}}+c\sum_{j\neq i}\int_{A_l}\left|U_j^{p-1}U_i\right|^{\frac{2N}{N+4}}.
\end{align}
It holds
\begin{align}\label{f21}
\int_{A_l}\left||PU_i-U_i|^{p-1}U_i\right|^{\frac{2N}{N+4}}\leq c\int_{A_l}U_i^{\frac{2N}{N+4}}\mu_{i\varepsilon}^{\frac{N-4}{2}(p-1)\frac{2N}{N+4}}=O(\varepsilon^{\frac{(N-4)\theta}{2k}\frac{p\beta}{2}}),
\end{align}
where $\beta=\frac{2N}{N+4}$.
As for $\sum_{j\neq i}\int_{A_l}\left|U_j^{p-1}U_i\right|^{\frac{2N}{N+4}}$.

If $l\neq i$ and $j=l$. The estimate of $\int_{A_l}|U_l^{p-1}U_i|^{\beta}$ is a direct result of Lemma \ref{lem6.8}.

If $l\neq i$ and $j\neq l$. Using H\"{o}lder inequality, we have
\begin{align}\label{f23}
\int_{A_l}|U_j^{p-1}U_i|^{\beta}\leq \left(\int_{A_l}U_i^{p+1}\right)^{\frac{N-4}{N+4}}\left(\int_{A_l}U_j^{p+1}\right)^{\frac{8}{N+4}}=O(\varepsilon^{\frac{N\theta}{2k}})=O(\varepsilon^{\frac{(N-4)\theta}{2k}\frac{p\beta}{2}}).
\end{align}
Thus by Lemma \ref{lem6.8}, (\ref{f2})-(\ref{f23}), we deduce
\begin{align*}
\int_{A_l}\left|[f'(U_i)-f'(V)]U_i\right|^{\frac{2N}{N+4}}=O(\varepsilon^{\frac{(N-4)\theta}{2k}\frac{p\beta}{2}}) \text{ for } l\neq i.
\end{align*}
If $l=i$, we obtain
\begin{align}\label{f3}
\int_{A_i}\left|[f'(U_i)-f'(V)]U_i\right|^{\frac{2N}{N+4}}\leq c\int_{A_i}\left||PU_i-U_i|U_i^{p-1}\right|^{\frac{2N}{N+4}}+ c\sum_{j\neq i}\int_{A_i}\Big|U_i^{p-1}U_j\Big|^{\frac{2N}{N+4}}.
\end{align}
As the proof of Lemma \ref{lem6.7}, we get
\begin{align*}
\int_{A_i}\left||PU_i-U_i|U_i^{p-1}\right|^{\frac{2N}{N+4}}=O(\varepsilon^{\frac{(N-4)\theta}{2k}\frac{p\beta}{2}}).
\end{align*}
Therefore, it follows that $\left|[f'(U_i)-f'(V)]U_i\right|_{\frac{2N}{N+4}}=O(\varepsilon^{\frac{(N-4)\theta}{2k}\frac{p}{2}})$ by Lemma \ref{lem6.8}.  Similarly, we can get the results for $5\leq N\leq 11$ and $N=12$.
\end{proof}

{\bf Acknowledgments:}
The research has been supported by National Natural Science Foundation of China 11971392, Natural Science
Foundation of Chongqing, China cstc2021ycjh-bgzxm0115, and Fundamental Research Funds for the
Central Universities XDJK2020B047.

\end{document}